\providecommand{\tabularnewline}{\\}
\providecommand{\algorithmname}{Algorithm}
\theoremstyle{definition}
\newtheorem{example}{\protect\examplename}
\theoremstyle{remark}
\newtheorem{claim}{\protect\claimname}
\theoremstyle{definition}
\newtheorem{defn}{\protect\definitionname}
\theoremstyle{plain}
\newtheorem{thm}{\protect\theoremname}
  \newenvironment{proof}[1][\proofname]{\par
    \normalfont\topsep6\p@\@plus6\p@\relax
    \trivlist
    \itemindent\parindent
    \item[\hskip\labelsep
          \scshape
      #1]\ignorespaces
  }{%
    \endtrivlist\@endpefalse
  }
  \providecommand{\proofname}{Proof}
\theoremstyle{plain}
\newtheorem{cor}{\protect\corollaryname}
\theoremstyle{plain}
\newtheorem{lem}{\protect\lemmaname}
\theoremstyle{remark}
\newtheorem{rem}{\protect\remarkname}
\providecommand{\claimname}{Claim}
\providecommand{\corollaryname}{Corollary}
\providecommand{\definitionname}{Definition}
\providecommand{\examplename}{Example}
\providecommand{\lemmaname}{Lemma}
\providecommand{\remarkname}{Remark}
\providecommand{\theoremname}{Theorem}
\begin{document}
\title{Optimal Solutions of Well-Posed Linear Systems via\\
Low-Precision Right-Preconditioned GMRES with\\
Forward and Backward Stabilization}
\author{Xiangmin Jiao\footnotemark[1]\ \footnotemark[2]}

\maketitle
\footnotetext[1]{Department of Applied Mathematics \& Statistics and Institute for Advanced Computational Science, Stony Brook University, Stony Brook, NY 11794, USA.}
\footnotetext[2]{Email: xiangmin.jiao@stonybrook.edu.}
\headers{Optimal Solutions of Well-Posed Linear Systems}{X. Jiao}
\begin{abstract}
In scientific applications, linear systems are typically well-posed,
and yet the coefficient matrices may be nearly singular in that the
condition number $\kappa(\boldsymbol{A})$ may be close to $1/\varepsilon_{w}$,
where $\varepsilon_{w}$ denotes the unit roundoff of the working
precision. Accurate and efficient solutions to such systems pose daunting
challenges. It is well known that iterative refinement (IR) can make
the forward error independent of $\kappa(\boldsymbol{A})$ if $\kappa(\boldsymbol{A})$
is sufficiently smaller than $1/\varepsilon_{w}$ and the residual
is computed in higher precision. Recently, Carson and Higham {[}\emph{SISC},
39(6), 2017{]} proposed a variant of IR called GMRES-IR, which replaced
the triangular solves in IR with left-preconditioned GMRES using the
LU factorization of $\boldsymbol{A}$ as the preconditioner. GMRES-IR
relaxed the requirement on $\kappa(\boldsymbol{A})$ in IR, but it
requires triangular solves to be evaluated in higher precision, complicating
its application to large-scale sparse systems. We propose a new iterative
method, called \emph{Forward-and-Backward Stabilized Minimal Residual}
or \emph{FBSMR}, by conceptually hybridizing right-preconditioned
GMRES (RP-GMRES) with quasi-minimization. We develop FBSMR based on
a new theoretical framework of \emph{essential-forward-and-backward
stability} (\emph{EFBS}), which extends the backward error analysis
to consider the \emph{intrinsic condition number} of a well-posed
problem. We stabilize the forward and backward errors in RP-GMRES
to achieve EFBS by evaluating a small portion of the algorithm in
higher precision while evaluating the preconditioner in lower precision.
FBSMR can achieve optimal accuracy in terms of both forward and backward
errors for well-posed problems with unpolluted matrices, independently
of $\kappa(\boldsymbol{A})$. With low-precision preconditioning,
FBSMR can reduce the computational, memory, and energy requirements
over direct methods with or without IR. FBSMR can also leverage parallelization-friendly
classical Gram-Schmidt in Arnoldi iterations without compromising
EFBS. We demonstrate the effectiveness of FBSMR using both random
and realistic linear systems.
\end{abstract}
\begin{keywords}
well-posedness; intrinsic condition number; essential forward stability;
essential backward stability; stabilized minimal residual; iterative
refinement
\end{keywords}
\begin{AMS}
65F08, 65F20, 65F50
\end{AMS}

\section{Introduction}

We consider the accurate and efficient solutions of a mathematically
well-posed but potentially nearly singular linear system 
\begin{equation}
\boldsymbol{A}\boldsymbol{x}=\boldsymbol{b},\label{eq:linear-system}
\end{equation}
where $\boldsymbol{A}\in\mathbb{C}^{n\times n}$ is in general sparse
and large scale, $\boldsymbol{b}\in\mathcal{B}\subset\mathbb{C}^{n}$,
and $\boldsymbol{x}\in\mathbb{C}^{n}$. For ease of understanding,
$\boldsymbol{A}$, $\boldsymbol{b}$, and $\boldsymbol{x}$ may also
be assumed real in most of this work. By \textquotedblleft well-posedness,\textquotedblright{}
we mean that the solution $\boldsymbol{x}$ exists, is unique, and
continuously depends on $\boldsymbol{b}$ for a subset $\mathcal{B}\subset\mathbb{C}^{n}$
of practical interest (a.k.a., the well-posedness in the Hadamard
notion \cite{hadamard1902problemes}). In other words, the \emph{intrinsic}
\emph{condition number} of the problem for the specific $\boldsymbol{b}$,
denoted by $\kappa(\boldsymbol{A},\boldsymbol{b})$, is bounded by
a small constant, a notion also informally referred to as ``effective
well-conditioning'' \cite{chan1988effectively}. We emphasize that
well-posedness contains an implicit assumption that we do not care
whether the problem is stable for $\boldsymbol{b}\in\mathbb{C}^{n}\backslash\mathcal{B}$.
Despite well-posedness, $\boldsymbol{A}$ may be ill-conditioned in
that the condition number $\kappa(\boldsymbol{A})$ may be close to
$1/\varepsilon_{w}$, where $\varepsilon_{w}$ is the working precision
(or unit roundoff) of the floating-point representation of $\boldsymbol{A}$
or $\boldsymbol{b}$.

By ``accurate solution,'' we mean that both the forward and backward
errors are at the order of $\varepsilon_{w}$. More precisely, the
numerical solution $\hat{\boldsymbol{x}}$ satisfies that
\begin{equation}
\boldsymbol{A}\hat{\boldsymbol{x}}=\boldsymbol{b}+\boldsymbol{e},\label{eq:accuracy-forward-backward}
\end{equation}
where the forward and backward errors depend on $\kappa(\boldsymbol{A},\boldsymbol{b})$
independently of $\kappa(\boldsymbol{A})$, i.e., $\Vert\hat{\boldsymbol{x}}-\boldsymbol{x}_{*}\Vert=\mathcal{O}(\kappa(\boldsymbol{A},\boldsymbol{b})\varepsilon_{w})\Vert\boldsymbol{x}_{*}\Vert$
and $\Vert\boldsymbol{e}\Vert=\mathcal{O}(\varepsilon_{w})\Vert\boldsymbol{b}\Vert$.
The constant factors in the big-$\mathcal{O}$ notation may depend
on small low-degree polynomials in $n$ but not on numerical values
of $\boldsymbol{A}$ or $\boldsymbol{b}$, and $\Vert\cdot\Vert$
denotes the 2-norm throughout this work unless otherwise noted. We
refer to this strong notion of stability as \emph{essential-forward-and-backward
stability} (\emph{EFBS)}. EFBS is stronger than backward stability,
which is defined as
\begin{equation}
(\boldsymbol{A}+\boldsymbol{E})\hat{\boldsymbol{x}}=\boldsymbol{b}+\boldsymbol{e},\label{eq:standard-backward-stable}
\end{equation}
where $\vert\boldsymbol{E}\vert=\vert\boldsymbol{A}\vert\mathcal{O}(\varepsilon_{w})$
(in componentwise errors, or more weakly $\Vert\boldsymbol{E}\Vert=\Vert\boldsymbol{A}\Vert\mathcal{O}(\varepsilon_{w})$)
and $\Vert\boldsymbol{e}\Vert=\mathcal{O}(\varepsilon_{w})\Vert\boldsymbol{b}\Vert$.
EFBS is desirable if $\boldsymbol{A}$ is ``unpolluted'' in the
sense that $\Vert\delta\boldsymbol{A}\boldsymbol{x}\Vert=\mathcal{O}(\varepsilon_{w})\Vert\boldsymbol{b}\Vert$,
where $\delta\boldsymbol{A}$ denotes the rounding errors in $\boldsymbol{A}$
at input. We will give rigorous mathematical justification of this
condition in section~\ref{sec:efbs}.

Our ``efficiency'' goal is closely related to accuracy; otherwise,
one can always achieve accuracy by simply solving (\ref{eq:linear-system})
using exact or arbitrary precision throughout. More specifically,
we aim to solve the problem using an iterative method with an approximate-inverse
preconditioner, where the overwhelming amount of the computation is
performed using a lower precision of $\mathcal{O}(\sqrt{\varepsilon_{w}})$
(or even $\mathcal{O}(\sqrt[4]{\varepsilon_{w}})$), while the remaining
small portion of the computation is performed at the precision of
$\varepsilon_{w}$ or $\varepsilon_{w}/\kappa(\boldsymbol{A})$. Such
a solver is of practical interest since it can improve efficiency
in terms of computational cost, storage, and energy. It can also potentially
benefit from the increased ubiquity of half-precision floating-point
systems on computers ranging from laptops to exascale computers \cite{abdelfattah2020survey}.
To this end, we propose a method called \emph{Forward and Backward
Stabilized Minimum Residual} or \emph{FBSMR}. We develop FBSMR by
conceptually hybridizing right-preconditioned GMRES (RP-GMRES) with
quasi-minimization. In FBSMR, the preconditioner can be in lower precision,
and only some gaxpy operations (including matrix-vector multiplications
and the computation of the residual vector) are in higher precision.
For example, given $\boldsymbol{A}$ and $\boldsymbol{b}$ in double
precision, FBSMR can use an approximate inverse of $\boldsymbol{A}$
constructed and evaluated in single precision, and only evaluates
sparse gaxpy in double-double precision.\footnote{The double-double precision is a more portable and faster alternative
of quadruple precision \cite{briggs1998doubledouble}.} We defer the generalization to half-precision preconditioners to
future work, which requires extra care to avoid overflow and underflow.

This work can be considered as a continuation of \cite{jiao2022approximate},
in which we focused on the optimal preconditioning of a more general
case of (\ref{eq:linear-system}) where $\boldsymbol{A}$ may be singular
instead of being merely ill-conditioned. When $\boldsymbol{A}$ is
singular, well-posedness can be achieved with a more general formulation,
\begin{equation}
\boldsymbol{x}=\arg\min_{\boldsymbol{x}}\Vert\boldsymbol{x}\Vert\quad\text{subject to}\quad\min_{\boldsymbol{x}}\Vert\boldsymbol{b}-\boldsymbol{A}\boldsymbol{x}\Vert.\label{eq:pseudo-inverse-solution}
\end{equation}
Mathematically, the solutions to (\ref{eq:linear-system}) and (\ref{eq:pseudo-inverse-solution})
are $\boldsymbol{x}=\boldsymbol{A}^{-1}\boldsymbol{b}$ and $\boldsymbol{x}=\boldsymbol{A}^{+}\boldsymbol{b}$,
respectively, where $\boldsymbol{A}^{+}$ denotes the Moore-Penrose
pseudoinverse (see e.g., \cite{golub2013matrix}). The norms for the
residual $\boldsymbol{b}-\boldsymbol{A}\boldsymbol{x}$ and the solution
$\boldsymbol{x}$ in (\ref{eq:pseudo-inverse-solution}) may be replaced
by other weighted norms by scaling the rows and columns $\boldsymbol{A}$
correspondingly based on the physical meaning of the applications.
If (\ref{eq:pseudo-inverse-solution}) is inconsistent, treating a
singular system as a nonsingular system in a higher precision violates
the well-posedness and may lead to nonphysical solutions. To avoid
this difficulty, we focus on solving (\ref{eq:linear-system}) under
the assumption of nonsingular linear systems and solving (\ref{eq:pseudo-inverse-solution})
in a least-squares sense assuming consistency. We defer the further
improvement for the pseudoinverse solution of potentially inconsistent
least squares systems to future work.

The development of this work was motivated by a somewhat ``surprising''
finding in applying the preconditioner developed in \cite{jiao2022approximate}.
In particular, optimal preconditioners in \cite{jiao2022approximate}
were developed for FGMRES \cite{saad1993flexible}, a variant of RP-GMRES
with variable preconditioners. Most of the proofs in \cite{jiao2022approximate}
assumed exact arithmetic. In the presence of rounding errors, we referred
to the convergence theory of unpreconditioned GMRES \cite{Saad:2003aa}.
Although seemingly obvious in hindsight, we found that RP-GMRES (including
FGMRES) requires a different analysis from unpreconditioned GMRES
in terms of stability. More precisely, the estimated residual norm
in RP-GMRES can be severely distorted if $\kappa(\boldsymbol{A})$
is large, giving a false sense of accuracy in terms of the residual
or solution for a well-posed problem. For left-preconditioned GMRES
(LP-GMRES) with a preconditioner $\boldsymbol{M}$, a similar distortion
by $\kappa(\boldsymbol{M})$ is well understood (see e.g., \cite{ghai2019comparison}).
For RP-GMRES, this distortion is more subtle and it nevertheless prevents
an optimally preconditioned solver from achieving machine precision
for unpolluted well-posed problems. This finding was surprising in
that the conventional wisdom was that for iterative methods, ``a
sound strategy is to focus on finding a good preconditioner rather
than the best accelerator'' \cite[p. 254]{Saad:2003aa}. With optimal
preconditioners, however, the lack of guarantee of forward errors
in iterative methods has become \textsl{the} bottleneck in achieving
optimal accuracy. We developed the EFBS theory and the FBSMR algorithm
to overcome this issue. 

The objective of EFBS is related to \emph{extra-precision iterative
refinement} (\emph{EPIR}),\footnote{Most implementations of iterative refinement (IR) use the working
precision to evaluate the residual to achieve backward stability \cite{skeel1980iterative},
so we add extra-precision to the name as in \cite{demmel2006error}
to avoid confusion. Following \cite{demmel1997applied}, we refer
to the common practice as single-precision IR (SPIR).} which performs the factorization and triangular solves in the working
precision but evaluates the residual in higher precision \cite{moler1967iterative}.
If $\kappa(\boldsymbol{A})\varepsilon_{w}<1$, EPIR can make the forward
error independent of $\kappa(\boldsymbol{A})$ (see e.g. \cite[Theorem 2.7]{demmel1997applied}).
Hence, EPIR achieves our accuracy objectives in terms of the forward
error. EPIR does not ensure a small backward error, so its stopping
criteria must be based on the forward error \cite{demmel2006error}.
The consideration of backward errors in EFBS will simplify the stopping
criteria. Another difference between EPIR and this work is that EPIR
does not consider lower-precision factorization. To the best of our
knowledge, FBSMR is the first method to deliver an $\mathcal{O}(\varepsilon_{w})$
forward error for well-posed problems even when the factorization
and the triangular solves are both performed in lower precision. In
addition, since EPIR with higher-precision residuals is rarely implemented
for sparse solvers, FBSMR offers a valuable alternative to EPIR to
improve the forward errors for sparse systems, even when a working-precision
factorization is sufficiently efficient.

Our proposed FBSMR shares some similarities to those of \cite{arioli2009using}
and \cite{carson2017new,nicholas2019error}, which were designed as
improvements of SPIR and EPIR by leveraging a lower-precision factorization
of $\boldsymbol{A}$ as the preconditioner in FGMRES and LP-GMRES,
respectively. For the convenience of discussions, Algorithm~\ref{alg:PGMRES}
outlines a generic GMRES with two-sided preconditioners to cover both
\cite{arioli2009using} and \cite{carson2017new}, where $\boldsymbol{M}=\boldsymbol{M}_{\ell}\boldsymbol{M}_{r}=\boldsymbol{A}+\boldsymbol{E}$
for $\vert\boldsymbol{E}\vert=\mathcal{O}(\sqrt{\varepsilon_{w}})\vert\boldsymbol{A}\vert$.
In \cite[Theorem 3.1]{arioli2009using}, Arioli and Duff showed that
FGMRES (i.e., $\boldsymbol{M}_{\ell}=\boldsymbol{I}$ and restart=\textbf{false}
in Algorithm~\ref{alg:PGMRES}) using such an $\boldsymbol{M}$ can
achieve backward stability in the sense that 
\begin{equation}
\Vert\boldsymbol{b}-\boldsymbol{A}\hat{\boldsymbol{x}}\Vert\leq\mathcal{O}(\varepsilon_{w})(\Vert\boldsymbol{A}\Vert\Vert\hat{\boldsymbol{x}}\Vert+\Vert\boldsymbol{b}\Vert+\Vert\boldsymbol{A}\Vert\left\Vert \vert\boldsymbol{Z}_{k}\vert\,\vert\boldsymbol{y}\vert\right\Vert +\left\Vert \boldsymbol{A}\boldsymbol{Z}_{k}\right\Vert \Vert\boldsymbol{y}\Vert).\label{eq:fgmres-ad}
\end{equation}
This result motivated them to replace the standard convergence criterion
in GMRES (i.e., $\Vert\boldsymbol{b}-\boldsymbol{A}\hat{\boldsymbol{x}}\Vert\approx\Vert\boldsymbol{H}_{k}\boldsymbol{y}-\beta\boldsymbol{e}_{1}\Vert\leq\epsilon\Vert\boldsymbol{b}\Vert$)
\cite{Saad:2003aa} with $\Vert\boldsymbol{H}_{k}\boldsymbol{y}-\beta\boldsymbol{e}_{1}\Vert\leq\epsilon(\Vert\boldsymbol{A}\Vert\Vert\hat{\boldsymbol{x}}\Vert+\Vert\boldsymbol{b}\Vert)$,
where $\epsilon>\varepsilon_{w}$ is a user-controllable threshold.
This condition is equivalent to setting $\tau=\epsilon(\Vert\boldsymbol{A}\Vert\Vert\hat{\boldsymbol{x}}\Vert+\Vert\boldsymbol{b}\Vert)$
in Algorithm~\ref{alg:PGMRES}. To achieve a stronger sense of backward
stability, Arioli and Duff further added a safeguard to check $\Vert\boldsymbol{b}-\boldsymbol{A}\hat{\boldsymbol{x}}\Vert\leq\epsilon(\Vert\boldsymbol{A}\Vert\Vert\hat{\boldsymbol{x}}\Vert+\Vert\boldsymbol{b}\Vert)$
and restart GMRES if the condition is violated. However, they did
not provide a bound on the forward error. In addition, it is difficult
to estimate $\Vert\boldsymbol{A}\Vert$ (in 2-norm) for sparse matrices,
since most sparse solvers only estimate $\Vert\boldsymbol{A}\Vert_{1}$
or $\Vert\boldsymbol{A}\Vert_{\infty}$. In \cite{carson2017new,nicholas2019error},
Carson and Higham proposed a method called GMRES-IR ($\boldsymbol{M}_{r}=\boldsymbol{I}$,
$\boldsymbol{Z}_{k}=\boldsymbol{Q}_{k-1}$, and restart=\textbf{false}
in Algorithm~\ref{alg:PGMRES}), which replaced triangular solves
in IR using LP-GMRES, where all the operations in line~\ref{ln:prec-matvec}
are evaluated in the $\varepsilon_{w}^{2}$ precision. The stopping
criteria in GMRES-IR rely on the forward-error-based criteria of IR
\cite{demmel2006error}. For the residual-based stopping criteria
in LP-GMRES, GMRES-IR used $\tau=\epsilon\Vert\boldsymbol{b}\Vert$
with a fairly large $\epsilon$ (such as $10^{-4}$). For dense matrices,
assuming GMRES-IR requires $o(n)$ LP-GMRES iterations, GMRES-IR (as
described in \cite{nicholas2019error}) meets our efficiency objective,
because factorization requires $\mathcal{O}(n^{3})$ operations while
triangular solvers require $\mathcal{O}(n^{2})$ operations. For sparse
systems, which is our focus, the triangular solves are often as expensive
as factorization, especially if the factorization is reused in a time-dependent
or nonlinear problem. Hence, it is desirable to evaluate the preconditioner
in lower precision. From a practical point of view, it is also quite
invasive to implement higher-precision triangular solves in a well-crafted
sparse solver (such as MUMPS \cite{amestoy2001mumps,amestoy2022combining}).
These complications make GMRES-IR beyond the reach of average users.
It is also worth noting that both \cite{arioli2009using} and \cite{carson2017new}
used modified Gram-Schmidt (MGS) in Arnoldi iterations to take advantage
of its backward stability \cite{paige2006modified}. We show that
this backward stability is neither sufficient nor necessary to achieve
EFBS for RP-GMRES. Instead, by stabilizing the gaxpy operations, FBSMR
remains EFBS even with classical Gram-Schmidt (CGS), making it more
friendly for parallel implementations.

\begin{algorithm}
\caption{\label{alg:PGMRES}GMRES with approximate inverse as left or right
preconditioner}

\begin{algorithmic}[1]

\State $\hat{\boldsymbol{x}}\leftarrow\boldsymbol{x}_{0};$ $\boldsymbol{r}\leftarrow\boldsymbol{b}-\boldsymbol{A}\boldsymbol{x}_{0}$
\Comment{$\boldsymbol{x}_{0}\equiv\boldsymbol{M}_{r}^{-1}\boldsymbol{b}$
in \cite{arioli2009using} and $\boldsymbol{x}_{0}\equiv\boldsymbol{0}$
in \cite{carson2017new}}

\State $\hat{\boldsymbol{r}}\leftarrow\boldsymbol{M}_{\ell}^{-1}\boldsymbol{r}$;
$\beta\leftarrow\Vert\hat{\boldsymbol{r}}\Vert$; $\boldsymbol{Q}_{0}\leftarrow[\hat{\boldsymbol{r}}/\beta]$;
$\hat{\boldsymbol{Q}}_{0}\leftarrow[\,]$; $\hat{\boldsymbol{R}}_{0}=[\,]$;
$k\leftarrow0$

\For{ $\text{it}=1,2,\dots,\text{maxit}$}

\State \label{ln:prec-matvec}$k\leftarrow k+1$; $\boldsymbol{z}_{k}\leftarrow\boldsymbol{M}_{r}^{-1}\boldsymbol{q}_{k-1}$;
$\boldsymbol{w}\leftarrow\boldsymbol{M}_{\ell}^{-1}\boldsymbol{A}\boldsymbol{z}_{k}$\Comment{Use
higher precision in \cite{carson2017new}}

\State \label{ln:Q}$[\boldsymbol{Q}_{k},\boldsymbol{h}_{k}]\leftarrow\text{updateArnoldiMGS}(\boldsymbol{Q}_{k-1},\boldsymbol{w})$
 \Comment{Arnoldi via MGS}

\State\label{ln:givens}$[\hat{\boldsymbol{Q}}_{k},\hat{\boldsymbol{R}}_{k}]\leftarrow\text{updateQR}(\hat{\boldsymbol{Q}}_{k-1},\hat{\boldsymbol{R}}_{k-1},\boldsymbol{h}_{k})$\Comment{QR
of $[\boldsymbol{h}_{1},\dots,\boldsymbol{h}_{k}]$ via Givens rots.}

\If{\label{ln:approx-conv-criteria}$\left|\boldsymbol{e}_{k+1}^{T}\hat{\boldsymbol{Q}}_{k}^{H}(\beta\boldsymbol{e}_{1})\right|\leq\tau$}
\Comment{Approximate checking of convergence}

\State\label{ln:compute-x}$\boldsymbol{y}_{k}\leftarrow\hat{\boldsymbol{R}}_{1:k,1:k,}^{-1}\hat{\boldsymbol{Q}}_{k}^{H}(\beta\boldsymbol{e}_{1})$

\State$\hat{\boldsymbol{x}}\leftarrow\hat{\boldsymbol{x}}+\boldsymbol{Z}_{k}\boldsymbol{y}$\Comment{$\boldsymbol{Z}_{k}\equiv[\boldsymbol{z}_{1},\dots,\boldsymbol{z}_{k}]$
in \cite{arioli2009using} and $\boldsymbol{Z}_{k}\equiv\boldsymbol{Q}_{k-1}$
in \cite{carson2017new}}

\If{\label{ln:safeguard} restart \textbf{and} $(\sigma\leftarrow\Vert\boldsymbol{r}\leftarrow\boldsymbol{b}-\boldsymbol{A}\hat{\boldsymbol{x}}\Vert)\geq\tau$}
\Comment{Adaptive restart in \cite{arioli2009using}}

\State $\hat{\boldsymbol{r}}\leftarrow\boldsymbol{M}_{\ell}^{-1}\boldsymbol{r}$;
$\beta\leftarrow\Vert\hat{\boldsymbol{r}}\Vert$; $\boldsymbol{Q}_{0}\leftarrow[\hat{\boldsymbol{r}}/\beta]$;
$\hat{\boldsymbol{Q}}_{0}\leftarrow[\,]$; $\hat{\boldsymbol{R}}_{0}=[\,]$;
$k\leftarrow0$

\Else

\State \textbf{break}  \Comment{Check convergence in EPIR in \cite{carson2017new}}

\EndIf\label{ln:conv-check-end}

\EndIf

\EndFor

\end{algorithmic}
\end{algorithm}

The remainder of the paper is organized as follows. In Section~\ref{sec:efbs},
we formally define EFBS and show its connection with Hadamard well-posedness
of the underlying problem. In Section~\ref{sec:forward-and-backward-errors},
we present a new analysis of the forward and backward errors in RP-GMRES.
In Section~\ref{sec:fbsmr}, we introduce FBSMR by conceptually hybridizing
RP-GMRES with quasi-minimization. In Section~\ref{sec:numerical-experiments},
we present some numerical results of FBSMR in solving both random
and ``real-life'' linear systems. Section~\ref{sec:Conclusions}
concludes the paper with a discussion on future research directions.

\section{\label{sec:efbs}Essential Forward and Backward Stability}

As alluded to in the introduction, this work focuses on developing
algorithms that are EFBS for problems that are well-posed and unpolluted.
Since EFBS is stronger than the standard backward stability, we offer
a mathematical justification for its significance for practical applications.
We then make an important connection between the intrinsic condition
number of a linear system with the well-posedness of its underlying
problem, especially when the rounding errors in the coefficient matrices
are strongly or weakly correlated. The notion of EFBS will allow us
to develop a new stability analysis of RP-GMRES and identify the key
components that need to be stabilized in RP-GMRES in later sections.

\subsection{\label{subsec:Justification-of-EFBS}Justification of EFBS}

We start with a justification of the notion of EFBS for unpolluted
problems, by revisiting one of the most fundamental works in numerical
analysis: In the first rigorous analysis of rounding errors of linear
solvers, von Neumann and Goldstine \cite{von1947numerical} (abbreviated
as v.N.--G. below) analyzed the accuracy of solving symmetric positive
definite (SPD) linear systems arising from well-posed (or ``mathematically
stable'' \cite{von1947numerical}) problems. v.N.--G. considered
the perturbations in the input (including modeling errors, noise in
observational data, discretization errors, and rounding errors). They
asserted that ``those perturbations will cause a parameter to deviate
from its ideal value, but this deviation takes place only once, and
is then valid with a constant value throughout the entire problem.''
As we understand nowadays, random perturbations in $\boldsymbol{A}$
can be as damaging as the rounding errors in a backward-sable algorithm
\cite{golub2013matrix,higham2002accuracy}, so it is justified to
focus on backward stable algorithms \textsl{in general}. However,
the perturbations in $\boldsymbol{A}$ are not necessarily ``random,''
and the perturbed system may remain self-consistent (or ``valid''
in the words of v.N.--G.). We demonstrate it with a specific example
as follows.
\begin{example}
\label{exa:fdm}Consider the Poisson equation $\Delta u=f(u)$ on
a structured mesh over $\Omega=[0,1]\times[0,1]$ with Dirichlet boundary
conditions $f(u)=g(u)$ over $\partial\Omega$. When solving it using
a finite difference method with centered differences (see e.g., \cite{heath2018scientific}),
the matrix $\boldsymbol{A}$ is composed of $-4h^{-2}$ in the diagonals
and $h^{-2}$ for the off-diagonal nonzero entries. Since both $1$
and $4$ are powers of $2$, all the nonzero entries would have exactly
the same relative roundoff. This matrix is unpolluted despite the
rounding errors because we can scale both $\boldsymbol{A}$ and $\boldsymbol{b}$
by $h^{2}$ so that $\boldsymbol{A}$ is exact. As a result, we only
need to consider the ``intrinsic'' sensitivity of (\ref{eq:linear-system})
relative to perturbations in $\boldsymbol{b}$ (including the effect
of rounding errors in $h^{2}$ as well as the rounding errors in evaluating
$f$ and $g$). If $f$ and $g$ are continuous, the well-posedness
of the Poisson equation in infinite dimensions guarantees that the
problem is intrinsically insensitive to small perturbations for the
specific $\boldsymbol{b}$ on a sufficiently fine mesh.
\end{example}
\textsl{\emph{This example may be overly simple, but it nevertheless
raises an important point.}}
\begin{claim}
\label{claim:The-rounding-errors}The rounding errors in the input
$\boldsymbol{A}$ \textsl{may} be strongly correlated so that the
perturbed system may remain self-consistent. \textsl{\emph{Hence,}}
a numerical algorithm should not assume that $\kappa(\boldsymbol{A})$
is \textsl{always} the intrinsic condition number for the perturbations
in $\boldsymbol{A}$, so backward stability is not always the correct
goal.
\end{claim}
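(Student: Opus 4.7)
The plan is to establish the claim by constructive demonstration plus a short general observation, since the statement is qualitative rather than quantitative. For the first sentence of the claim I would exhibit Example~\ref{exa:fdm} as a witness: on a uniform mesh with $h=2^{-k}$, each nonzero entry of $\boldsymbol{A}$ is $\pm$ a power of two and therefore representable exactly in binary floating point, so the per-entry input rounding errors vanish identically. Scaling both sides by $h^{2}$ then transfers whatever rounding occurs in $h$ into $\boldsymbol{b}$ without changing the linear system, so the perturbed system remains \emph{self-consistent} in the v.N.--G.~sense.

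To make this witness representative rather than anecdotal, I would next generalize it: whenever $\boldsymbol{A}=\alpha\tilde{\boldsymbol{A}}$ for some $\alpha\in\mathbb{R}$ and a floating-point-exact $\tilde{\boldsymbol{A}}$, the realized input error has the deterministic form $\delta\boldsymbol{A}=(\delta\alpha/\alpha)\boldsymbol{A}$ with $|\delta\alpha/\alpha|=\mathcal{O}(\varepsilon_{w})$. Hence $\delta\boldsymbol{A}\boldsymbol{x}$ is collinear with $\boldsymbol{A}\boldsymbol{x}$ and may be absorbed into a perturbation of $\boldsymbol{b}$ of size $\mathcal{O}(\varepsilon_{w})\Vert\boldsymbol{b}\Vert$. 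Thus the admissible perturbation set on such inputs is a one-dimensional family strictly inside the $n^{2}$-dimensional ball $\{\delta\boldsymbol{A}:\Vert\delta\boldsymbol{A}\Vert\le\varepsilon_{w}\Vert\boldsymbol{A}\Vert\}$ that is used to derive $\kappa(\boldsymbol{A})$; on this restricted set the intrinsic sensitivity is instead captured by $\kappa(\boldsymbol{A},\boldsymbol{b})$, which may be many orders of magnitude smaller than $\kappa(\boldsymbol{A})$.

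For the second sentence of the claim I would draw the immediate contrapositive. A backward-stable algorithm only guarantees $(\boldsymbol{A}+\boldsymbol{E})\hat{\boldsymbol{x}}=\boldsymbol{b}+\boldsymbol{e}$ with $\Vert\boldsymbol{E}\Vert=\mathcal{O}(\varepsilon_{w})\Vert\boldsymbol{A}\Vert$, and the usual perturbation bound then yields only $\Vert\hat{\boldsymbol{x}}-\boldsymbol{x}_{*}\Vert/\Vert\boldsymbol{x}_{*}\Vert=\mathcal{O}(\kappa(\boldsymbol{A})\varepsilon_{w})$. In the scenarios just constructed, however, the error $\boldsymbol{E}$ manufactured by such an algorithm is not of a form the application actually admits, so the factor $\kappa(\boldsymbol{A})$ is an algorithmic artifact rather than a property of the problem. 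Consequently backward stability cannot be the universal goal, and the stronger notion of EFBS is the appropriate target for unpolluted well-posed problems.

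The main obstacle will be to avoid overreaching. The claim is deliberately guarded by ``\emph{may} be strongly correlated'' and ``not \emph{always}'', so the argument must not assert that input rounding errors in $\boldsymbol{A}$ are structured in general; such a blanket assertion would contradict the v.N.--G.~perspective, which remains correct for genuinely worst-case inputs. The delicate step is therefore to name the class of problems on which the structured perturbation model is valid---precisely the role of the forthcoming notion of \emph{unpolluted} problems---and to make clear that EFBS replaces backward stability as the appropriate objective only on that class.
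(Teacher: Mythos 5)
Your proposal is correct and follows essentially the same route as the paper, which supports this qualitative claim not with a formal proof but with Example~\ref{exa:fdm} (the finite-difference Poisson matrix whose entries share a common relative roundoff, so the perturbation can be rescaled into $\boldsymbol{b}$) together with the standard $\mathcal{O}(\kappa(\boldsymbol{A})\varepsilon_{w})$ limitation of backward stability. Your generalization to $\boldsymbol{A}=\alpha\tilde{\boldsymbol{A}}$ with $\delta\boldsymbol{A}$ collinear with $\boldsymbol{A}$ is a sound special case of what the paper later formalizes as strongly correlated rounding errors (Definition~\ref{def:correlation} and Theorem~\ref{thm:cond-num-relation}), so it anticipates rather than departs from the paper's argument.
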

Of course, the preceding claim by no means diminishes the usefulness
of backward-stable algorithms: If the perturbations in $\boldsymbol{A}$
are truly random, then ``we cannot justifiably criticize an algorithm
for returning an inaccurate $\hat{\boldsymbol{x}}$ if $\boldsymbol{A}$
is ill-conditioned relatively to the unit roundoff'' in the sense
of (\ref{eq:standard-backward-stable}) \cite[p. 102]{golub2013matrix}.
However, if a modeler (the person who constructs (\ref{eq:linear-system}))
can ensure $\boldsymbol{A}$ is unpolluted in the sense that its rounding
errors are highly correlated as in the above example (also see section~\ref{subsec:Pollution-errors}),
then paraphrasing Golub and Van Loan, ``we have every `right' to
pursue the development of a linear equation solver that renders the
exact solution to a nearby problem'' in the sense of (\ref{eq:accuracy-forward-backward})
instead of (\ref{eq:standard-backward-stable}), so that the errors
introduced by the algorithm do not dominate those caused by the rounding
errors in the input.

\subsection{\label{subsec:intrinsic-condition-numbers}Intrinsic condition numbers
of well-posed problems and inverse}

We now formalize the notion of the intrinsic condition number for
a well-posed problem in a Hadamard notion \cite{hadamard1902problemes}.
Although Hadamard only considered initial value problems, it is common
to use a generalization of his notion of well-posedness.
\begin{defn}
\label{def:Hadamard-well-posedness}A problem $\boldsymbol{y}=\boldsymbol{f}(\boldsymbol{x}):\mathcal{X}\rightarrow\mathcal{Y}$
is \emph{Hadamard well-posed} if the solution $\boldsymbol{y}$ exists,
is unique, and depends continuously on $\boldsymbol{x}$.
\end{defn}
Under the assumption of well-posedness, both (\ref{eq:linear-system})
and (\ref{eq:pseudo-inverse-solution}) are bijections, of which the
inverse problem is $\boldsymbol{A}\boldsymbol{x}$, which maps from
$\boldsymbol{x}\in\mathcal{R}(\boldsymbol{A}^{H})$ to $\boldsymbol{b}\in\mathcal{R}(\boldsymbol{A})$.
Hence, we limit our attention to bijections when defining backward
errors.
\begin{defn}
\label{def:forward-backward-error}Given a problem $\boldsymbol{y}=\boldsymbol{f}(\boldsymbol{x}):\mathbb{C}^{n}\rightarrow\mathbb{C}^{m}$
that is a bijection in a neighborhood of $\boldsymbol{x}$ and $\boldsymbol{y}$
in $\mathcal{X}\subseteq\mathbb{C}^{n}$ and $\mathcal{Y}\subseteq\mathbb{C}^{m}$,
so that the inverse $\boldsymbol{x}=\boldsymbol{f}^{-1}(\boldsymbol{y}):\mathcal{Y}\subseteq\mathbb{C}^{m}\rightarrow\mathcal{X}\subseteq\mathbb{C}^{n}$
is well defined, let $\tilde{\boldsymbol{y}}=\tilde{\boldsymbol{f}}(\tilde{\boldsymbol{x}})$
denote a perturbed well-posed problem with some perturbations to $\tilde{\boldsymbol{x}}$
and the parameters in $\tilde{\boldsymbol{f}}$ with a corresponding
inverse $\tilde{\boldsymbol{x}}=\tilde{\boldsymbol{f}}^{-1}(\tilde{\boldsymbol{y}})$
in the neighborhood of $\tilde{\boldsymbol{x}}$ and $\tilde{\boldsymbol{y}}$.
The (relative) \emph{forward error} is $\Vert\tilde{\boldsymbol{f}}(\tilde{\boldsymbol{x}})-\boldsymbol{f}(\boldsymbol{x})\Vert/\Vert\boldsymbol{f}(\boldsymbol{x})\Vert$.
The (\emph{relative}) \emph{backward error }is the forward error of
the inverse problem, i.e., $\left\Vert \boldsymbol{f}^{-1}\left(\tilde{\boldsymbol{y}}\right)-\boldsymbol{f}^{-1}(\boldsymbol{y})\right\Vert /\Vert\boldsymbol{f}^{-1}(\boldsymbol{y})\Vert=\left\Vert \boldsymbol{f}^{-1}\left(\tilde{\boldsymbol{f}}(\tilde{\boldsymbol{x}})\right)-\boldsymbol{x}\right\Vert /\Vert\boldsymbol{x}\Vert$.
\end{defn}
We will consider the ``pollution'' $\delta\boldsymbol{f}=\tilde{\boldsymbol{f}}-\boldsymbol{f}$
in section~\ref{subsec:Pollution-errors}. For the case of $\tilde{\boldsymbol{f}}=\boldsymbol{f}$,
the backward error in Definition~\ref{def:forward-backward-error}
reduces to its standard notion, and the condition number is the amplification
factor between the forward and backward errors.
\begin{defn}
\label{def:condition-number} The\emph{ (intrinsic) condition number}
(\emph{ICN}) of $\boldsymbol{f}$ in Definition~\ref{def:forward-backward-error}
w.r.t. to the perturbation $\delta\boldsymbol{x}\equiv\tilde{\boldsymbol{x}}-\boldsymbol{x}$
is the supremum of the ratio between the forward and backward errors,
i.e., 
\begin{align*}
\kappa_{\delta\boldsymbol{x}}(\boldsymbol{f},\boldsymbol{x}) & =\lim_{\epsilon\rightarrow0}\sup_{\Vert\delta\boldsymbol{x}\Vert=\epsilon}\left(\left.\frac{\Vert\boldsymbol{f}(\tilde{\boldsymbol{x}})-\boldsymbol{f}(\boldsymbol{x})\Vert}{\Vert\boldsymbol{f}(\boldsymbol{x})\Vert}\right/\frac{\Vert\tilde{\boldsymbol{x}}-\boldsymbol{x}\Vert}{\Vert\boldsymbol{x}\Vert}\right).
\end{align*}
The problem $\boldsymbol{f}(\boldsymbol{x})$ is \emph{intrinsically
well-conditioned} (\emph{IWC}) for a specific $\boldsymbol{x}$ if
$\kappa_{\delta\boldsymbol{x}}\leq C$ for some constant $C>0$.
\end{defn}
In Definition~\ref{def:condition-number}, $\kappa_{\delta\boldsymbol{x}}$
is the standard condition number in numerical analysis (see e.g.,
\cite{heath2018scientific,trefethen1997numerical}). Mathematically,
the continuity in Definition~\ref{def:Hadamard-well-posedness} requires
that $\kappa_{\delta\boldsymbol{x}}(\boldsymbol{f},\boldsymbol{x})\leq C$
for some constant $C>0$, so Hadamard well-posedness implies\emph{
}IWC if $\boldsymbol{f}$ is a bijection. Note that $\kappa_{\delta\boldsymbol{x}}$
would be distorted if $\boldsymbol{x}$ and $\boldsymbol{y}=\boldsymbol{f}(\boldsymbol{x})$
are scaled by arbitrary matrices $\boldsymbol{W}_{1}$ and $\boldsymbol{W}_{2}$
so that $\Vert\boldsymbol{W}_{1}\boldsymbol{x}\Vert/\Vert\boldsymbol{x}\Vert\gg\Vert\boldsymbol{W}_{2}\boldsymbol{y}\Vert/\Vert\boldsymbol{y}\Vert$
or vice versa. By assuming well-posedness, our definitions allow such
scalings if they make ill-posed problems well-posed, while prohibiting
scalings that make well-posted problems ill-posed.

When applying Definition~\ref{def:condition-number} to (\ref{eq:linear-system}),
the forward problem is $\boldsymbol{A}^{-1}\boldsymbol{b}$. The inverse
problem is $\boldsymbol{A}\boldsymbol{x}$, so the backward error
is 
\[
\Vert\boldsymbol{A}(\boldsymbol{A}+\delta\boldsymbol{A})^{-1}(\boldsymbol{b}+\delta\boldsymbol{b})-\boldsymbol{b}\Vert/\Vert\boldsymbol{b}\Vert=\Vert(\boldsymbol{I}+\delta\boldsymbol{A}\boldsymbol{A}^{-1})^{-1}(\boldsymbol{b}+\delta\boldsymbol{b})-\boldsymbol{b}\Vert/\Vert\boldsymbol{b}\Vert.
\]
Note that computing the coefficients of $\boldsymbol{A}$ cannot be
the inverse problem since $\boldsymbol{A}$ has a high $n^{2}$ dimensionality
and hence cannot be determined from $n$ values in $\boldsymbol{x}$.
Hence, the ICN w.r.t. the perturbation $\delta\boldsymbol{b}$ (assuming
$\delta\boldsymbol{A}=\boldsymbol{0}$) is
\begin{equation}
\kappa_{\delta\boldsymbol{b}}(\boldsymbol{A},\boldsymbol{b})=\lim_{\epsilon\rightarrow0}\sup_{\Vert\delta\boldsymbol{b}\Vert=\epsilon}\left(\left.\frac{\Vert\boldsymbol{A}^{-1}\delta\boldsymbol{b}\Vert}{\Vert\boldsymbol{x}\Vert}\right/\frac{\Vert\delta\boldsymbol{b}\Vert}{\Vert\boldsymbol{b}\Vert}\right),\label{eq:cond-number}
\end{equation}
where $\boldsymbol{x}=\boldsymbol{A}^{-1}\boldsymbol{b}$. The following
theorem gives the formula for computing $\kappa_{\delta\boldsymbol{b}}(\boldsymbol{A},\boldsymbol{b})$,
which can also be used as an alternative definition of $\kappa_{\delta\boldsymbol{b}}$
in place of (\ref{eq:cond-number}).
\begin{thm}
\label{thm:condition-number-instance}Consider the singular value
decomposition (SVD) of a nonsingular $\boldsymbol{A}=\boldsymbol{U}\boldsymbol{\Sigma}\boldsymbol{V}^{H}$
and a vector \textup{$\boldsymbol{b}=\Vert\boldsymbol{b}\Vert(c_{1}\boldsymbol{u}_{1}+c_{2}\boldsymbol{u}_{2}+\cdots+c_{n}\boldsymbol{u}_{n})\in\mathbb{C}^{n}\backslash\{\boldsymbol{0}\}$,
where $c_{i}\in\mathbb{C}$ and $\boldsymbol{u}_{i}$ is the $i$th
left singular vector corresponding to singular value $\sigma_{i}$.
Then,
\begin{equation}
\kappa_{\delta\boldsymbol{b}}(\boldsymbol{A},\boldsymbol{b})=1/\sqrt{{\textstyle \sum_{i=1}^{n}}\vert c_{i}\sigma_{n}/\sigma_{i}\vert^{2}}.\label{eq:tight-condition-number}
\end{equation}
}
\end{thm}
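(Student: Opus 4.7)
The plan is to exploit the fact that the ratio inside the supremum is homogeneous of degree zero in $\delta\boldsymbol{b}$, which removes the dependence on $\epsilon$ entirely, and then reduce everything to a direct SVD computation. Specifically, since $\|\delta\boldsymbol{b}\|$ appears in the denominator while $\boldsymbol{A}^{-1}\delta\boldsymbol{b}$ is linear in $\delta\boldsymbol{b}$, I would first rewrite
\[
\kappa_{\delta\boldsymbol{b}}(\boldsymbol{A},\boldsymbol{b}) \;=\; \frac{\Vert\boldsymbol{b}\Vert}{\Vert\boldsymbol{x}\Vert}\sup_{\delta\boldsymbol{b}\neq\boldsymbol{0}}\frac{\Vert\boldsymbol{A}^{-1}\delta\boldsymbol{b}\Vert}{\Vert\delta\boldsymbol{b}\Vert} \;=\; \frac{\Vert\boldsymbol{b}\Vert}{\Vert\boldsymbol{x}\Vert}\,\Vert\boldsymbol{A}^{-1}\Vert_{2}.
\]
The limit in $\epsilon$ then becomes trivial, and the problem reduces to computing $\Vert\boldsymbol{A}^{-1}\Vert_2$ and $\Vert\boldsymbol{x}\Vert$ explicitly from the SVD.

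Next, I would use $\boldsymbol{A}=\boldsymbol{U}\boldsymbol{\Sigma}\boldsymbol{V}^{H}$ with singular values ordered $\sigma_1\ge\cdots\ge\sigma_n>0$, so that $\Vert\boldsymbol{A}^{-1}\Vert_2 = 1/\sigma_n$. For $\Vert\boldsymbol{x}\Vert$, the expansion $\boldsymbol{b}=\Vert\boldsymbol{b}\Vert\sum_i c_i\boldsymbol{u}_i$ gives $\boldsymbol{U}^{H}\boldsymbol{b}=\Vert\boldsymbol{b}\Vert(c_1,\dots,c_n)^{T}$, hence
\[
\boldsymbol{x} \;=\; \boldsymbol{V}\boldsymbol{\Sigma}^{-1}\boldsymbol{U}^{H}\boldsymbol{b} \;=\; \Vert\boldsymbol{b}\Vert\sum_{i=1}^{n}\frac{c_i}{\sigma_i}\boldsymbol{v}_i,
\]
and unitary invariance of $\Vert\cdot\Vert_2$ yields $\Vert\boldsymbol{x}\Vert=\Vert\boldsymbol{b}\Vert\bigl(\sum_i|c_i/\sigma_i|^{2}\bigr)^{1/2}$.

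Substituting both expressions into the reduced formula gives
\[
\kappa_{\delta\boldsymbol{b}}(\boldsymbol{A},\boldsymbol{b}) \;=\; \frac{1}{\sigma_n\sqrt{\sum_{i=1}^{n}|c_i/\sigma_i|^{2}}} \;=\; \frac{1}{\sqrt{\sum_{i=1}^{n}|c_i\sigma_n/\sigma_i|^{2}}},
\]
which is exactly (\ref{eq:tight-condition-number}). As a sanity check, I would verify the two boundary cases: if $\boldsymbol{b}\propto\boldsymbol{u}_n$ (so $c_n=1$ and all other $c_i=0$), the formula yields $\kappa_{\delta\boldsymbol{b}}=1$, consistent with the fact that $\boldsymbol{x}$ then lies along $\boldsymbol{v}_n$ and the worst-case perturbation is aligned; if $\boldsymbol{b}\propto\boldsymbol{u}_1$, the formula recovers $\sigma_1/\sigma_n=\kappa(\boldsymbol{A})$, matching the classical worst-case condition number.

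There is no real obstacle here; the only thing to be careful about is the attainment of the supremum $\Vert\boldsymbol{A}^{-1}\Vert_2=1/\sigma_n$ by the choice $\delta\boldsymbol{b}=\epsilon\boldsymbol{u}_n$, which justifies taking the sup outside the limit. The substantive content of the theorem is not the derivation itself but the observation that $\kappa_{\delta\boldsymbol{b}}(\boldsymbol{A},\boldsymbol{b})$ can be arbitrarily smaller than the standard $\kappa(\boldsymbol{A})=\sigma_1/\sigma_n$ whenever $\boldsymbol{b}$ has a significant component along directions $\boldsymbol{u}_i$ with small $\sigma_n/\sigma_i$, which is precisely the notion of effective well-conditioning motivating the rest of the paper.
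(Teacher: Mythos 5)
Your proposal is correct and follows essentially the same route as the paper: evaluate the supremum $\lim_{\epsilon\to 0}\sup_{\Vert\delta\boldsymbol{b}\Vert=\epsilon}\Vert\boldsymbol{A}^{-1}\delta\boldsymbol{b}\Vert/\Vert\delta\boldsymbol{b}\Vert=\Vert\boldsymbol{A}^{-1}\Vert=1/\sigma_{n}$, compute $\Vert\boldsymbol{x}\Vert=\Vert\boldsymbol{b}\Vert\sqrt{\sum_{i}\vert c_{i}\vert^{2}/\sigma_{i}^{2}}$ from the SVD expansion, and substitute into the definition. The extra remarks on homogeneity, attainment at $\delta\boldsymbol{b}\propto\boldsymbol{u}_{n}$, and the boundary cases are fine but not needed beyond what the paper already does.
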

\begin{proof}
Observe that $\lim_{\epsilon\rightarrow0}\sup_{\Vert\delta\boldsymbol{b}\Vert=\epsilon}\Vert\boldsymbol{A}^{-1}\delta\boldsymbol{b}\Vert/\Vert\delta\boldsymbol{b}\Vert=\Vert\boldsymbol{A}^{-1}\Vert=1/\sigma_{n}$.
Since $\boldsymbol{x}=\boldsymbol{A}^{-1}\boldsymbol{b}=\Vert\boldsymbol{b}\Vert\sum_{i}c_{i}\boldsymbol{v}_{i}/\sigma_{i}$,
$\Vert\boldsymbol{x}\Vert=\Vert\boldsymbol{b}\Vert\sqrt{\sum_{i=1}^{n}\vert c_{i}\vert^{2}/\sigma_{i}^{2}}$.
Substituting these into (\ref{eq:cond-number}), we obtain $\kappa_{\delta\boldsymbol{b}}(\boldsymbol{A},\boldsymbol{b})=\Vert\boldsymbol{A}^{-1}\Vert/\sqrt{\sum_{i}\vert c_{i}\vert^{2}/\sigma_{i}^{2}},$
which simplifies to (\ref{eq:tight-condition-number}).
\end{proof}
Obviously, $\kappa_{\delta\boldsymbol{b}}\leq\sigma_{1}/\sigma_{n}=\kappa(\boldsymbol{A})$.
However, $\kappa_{\delta\boldsymbol{b}}$ depends on how $\boldsymbol{b}$
is distributed in terms of the singular vectors. With a similar argument,
we obtain the following condition number for its inverse problem.
\begin{cor}
\label{cor:cond-mat-vec}For the matrix-vector multiplication $\boldsymbol{A}\boldsymbol{x}$
with \textup{$\boldsymbol{x}=\Vert\boldsymbol{x}\Vert(d_{1}\boldsymbol{u}_{1}+d_{2}\boldsymbol{u}_{2}+\cdots+d_{n}\boldsymbol{u}_{n})\in\mathbb{C}^{n}\backslash\{\boldsymbol{0}\}$,
$\kappa_{\delta\boldsymbol{x}}(\boldsymbol{A},\boldsymbol{x})=1/\sqrt{\sum_{i=1}^{n}\vert d_{i}\sigma_{i}/\sigma_{1}\vert^{2}}$.}
\end{cor}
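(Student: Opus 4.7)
The plan is to mirror the proof of Theorem~\ref{thm:condition-number-instance}, with the forward map now being $\boldsymbol{f}(\boldsymbol{x})=\boldsymbol{A}\boldsymbol{x}$ rather than $\boldsymbol{f}^{-1}(\boldsymbol{b})=\boldsymbol{A}^{-1}\boldsymbol{b}$. First, I would instantiate Definition~\ref{def:condition-number} on this $\boldsymbol{f}$ to obtain
$$
\kappa_{\delta\boldsymbol{x}}(\boldsymbol{A},\boldsymbol{x}) \;=\; \lim_{\epsilon\rightarrow0}\sup_{\Vert\delta\boldsymbol{x}\Vert=\epsilon}\left(\left.\frac{\Vert\boldsymbol{A}\delta\boldsymbol{x}\Vert}{\Vert\boldsymbol{A}\boldsymbol{x}\Vert}\right/\frac{\Vert\delta\boldsymbol{x}\Vert}{\Vert\boldsymbol{x}\Vert}\right) \;=\; \frac{\Vert\boldsymbol{x}\Vert}{\Vert\boldsymbol{A}\boldsymbol{x}\Vert}\sup_{\delta\boldsymbol{x}\neq\boldsymbol{0}}\frac{\Vert\boldsymbol{A}\delta\boldsymbol{x}\Vert}{\Vert\delta\boldsymbol{x}\Vert},
$$
where the last equality uses that $\Vert\boldsymbol{A}\boldsymbol{x}\Vert$ and $\Vert\boldsymbol{x}\Vert$ do not depend on $\delta\boldsymbol{x}$. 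The inner supremum is then just the operator $2$-norm $\Vert\boldsymbol{A}\Vert_{2}=\sigma_{1}$, which plays here the role that $\Vert\boldsymbol{A}^{-1}\Vert=1/\sigma_{n}$ played in Theorem~\ref{thm:condition-number-instance}.

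The second step is to evaluate $\Vert\boldsymbol{A}\boldsymbol{x}\Vert$ from the SVD $\boldsymbol{A}=\boldsymbol{U}\boldsymbol{\Sigma}\boldsymbol{V}^{H}$. Expanding $\boldsymbol{x}$ in the singular-vector basis as in the statement so that $\boldsymbol{A}\boldsymbol{x}=\Vert\boldsymbol{x}\Vert\sum_{i}d_{i}\sigma_{i}\boldsymbol{u}_{i}$, the orthonormality of the $\boldsymbol{u}_{i}$ yields $\Vert\boldsymbol{A}\boldsymbol{x}\Vert=\Vert\boldsymbol{x}\Vert\sqrt{\sum_{i}\vert d_{i}\sigma_{i}\vert^{2}}$. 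Substituting into the formula above gives
$$
\kappa_{\delta\boldsymbol{x}}(\boldsymbol{A},\boldsymbol{x}) \;=\; \frac{\sigma_{1}}{\sqrt{\sum_{i=1}^{n}\vert d_{i}\sigma_{i}\vert^{2}}} \;=\; \frac{1}{\sqrt{\sum_{i=1}^{n}\vert d_{i}\sigma_{i}/\sigma_{1}\vert^{2}}},
$$
which is the claimed identity. This is structurally the exact dual of Theorem~\ref{thm:condition-number-instance}: the roles of $\sigma_{n}$ and $\sigma_{1}$ are swapped, and the singular-vector pair relevant to the expansion flips sides because we are applying $\boldsymbol{A}$ instead of $\boldsymbol{A}^{-1}$.

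There is no substantive obstacle; the proof is a mechanical adaptation of the preceding theorem, and the two ingredients, the operator-norm supremum and the Pythagorean computation of $\Vert\boldsymbol{A}\boldsymbol{x}\Vert$ in the singular-vector basis, combine immediately. The only point I would verify carefully is the basis in which the coefficients $d_{i}$ of $\boldsymbol{x}$ are written, so that applying $\boldsymbol{A}$ produces an expansion in an orthonormal set and the square-root denominator comes out cleanly; this is the single convention choice that, if slipped, would replace $\sigma_{i}/\sigma_{1}$ by $\sigma_{1}/\sigma_{i}$ in the final formula.
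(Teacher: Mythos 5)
Your proposal is correct and is exactly the ``similar argument'' the paper has in mind for this corollary: the supremum factor becomes $\Vert\boldsymbol{A}\Vert=\sigma_{1}$ in place of $\Vert\boldsymbol{A}^{-1}\Vert=1/\sigma_{n}$, and $\Vert\boldsymbol{A}\boldsymbol{x}\Vert=\Vert\boldsymbol{x}\Vert\sqrt{\sum_{i}\vert d_{i}\sigma_{i}\vert^{2}}$ from the SVD expansion, giving the stated formula. The convention point you flag is real but resolves as you assume: for the formula to hold for general $\boldsymbol{A}$ the coefficients $d_{i}$ must be taken with respect to the right singular vectors $\boldsymbol{v}_{i}$ (the statement's use of $\boldsymbol{u}_{i}$ reads as a slip carried over from the preceding theorem), and with that reading your argument is complete.
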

However, in the context of (\ref{eq:cond-number}), it is more useful
to express $\kappa_{\delta\boldsymbol{x}}$ in terms of the components
in $\boldsymbol{b}$ instead of $\boldsymbol{x}$.
\begin{thm}
\label{thm:cond-inv}For the matrix-vector multiplication $\boldsymbol{A}\boldsymbol{x}$
with $\boldsymbol{x}=\boldsymbol{A}^{-1}\boldsymbol{b}$, under the
same conditions as in Theorem~\ref{thm:condition-number-instance},
\textup{
\begin{equation}
\kappa_{\delta\boldsymbol{x}}(\boldsymbol{A},\boldsymbol{x})=\sqrt{{\textstyle \sum_{i=1}^{n}}\vert c_{i}\sigma_{1}/\sigma_{i}\vert^{2}}.\label{eq:tight-cond-inverse-problem}
\end{equation}
}
\end{thm}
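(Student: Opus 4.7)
The plan is to reduce the claim to Corollary~\ref{cor:cond-mat-vec}, which expresses $\kappa_{\delta\boldsymbol{x}}(\boldsymbol{A},\boldsymbol{x})$ in terms of the coefficients $d_i$ of $\boldsymbol{x}$ in the right-singular-vector basis, and then to translate those coefficients into the $c_i$ expansion of $\boldsymbol{b}$ using the SVD $\boldsymbol{A}=\boldsymbol{U}\boldsymbol{\Sigma}\boldsymbol{V}^H$.

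First, I would expand $\boldsymbol{x}=\boldsymbol{A}^{-1}\boldsymbol{b}=\boldsymbol{V}\boldsymbol{\Sigma}^{-1}\boldsymbol{U}^H\boldsymbol{b}=\Vert\boldsymbol{b}\Vert\sum_{i=1}^{n}(c_i/\sigma_i)\boldsymbol{v}_i$. Rewriting the same vector in the normalized form $\Vert\boldsymbol{x}\Vert\sum_i d_i\boldsymbol{v}_i$ used in Corollary~\ref{cor:cond-mat-vec} yields $d_i=\Vert\boldsymbol{b}\Vert c_i/(\sigma_i\Vert\boldsymbol{x}\Vert)$, and taking the 2-norm gives $\Vert\boldsymbol{x}\Vert=\Vert\boldsymbol{b}\Vert\sqrt{\sum_i |c_i|^2/\sigma_i^2}$, which is precisely the identity already derived in the proof of Theorem~\ref{thm:condition-number-instance}.

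Second, I would substitute these $d_i$ into the formula of Corollary~\ref{cor:cond-mat-vec}. Since the orthonormality of $\{\boldsymbol{u}_i\}$ forces $\sum_i |c_i|^2=1$, the interior sum collapses to $\sum_i|d_i\sigma_i/\sigma_1|^2=\Vert\boldsymbol{b}\Vert^2/(\sigma_1^2\Vert\boldsymbol{x}\Vert^2)$, so $\kappa_{\delta\boldsymbol{x}}(\boldsymbol{A},\boldsymbol{x})=\sigma_1\Vert\boldsymbol{x}\Vert/\Vert\boldsymbol{b}\Vert$. Plugging in the expression for $\Vert\boldsymbol{x}\Vert$ then gives $\sigma_1\sqrt{\sum_i |c_i|^2/\sigma_i^2}=\sqrt{\sum_i |c_i\sigma_1/\sigma_i|^2}$, which is the claimed identity.

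There is no genuine obstacle beyond bookkeeping across two singular-vector bases: $\boldsymbol{b}$ lives naturally in the span of the left singular vectors $\boldsymbol{u}_i$, while $\boldsymbol{x}$ lives in the span of the right singular vectors $\boldsymbol{v}_i$, and $\boldsymbol{A}^{-1}$ maps the $i$th left-singular component of $\boldsymbol{b}$ to its $\boldsymbol{v}_i$ counterpart scaled by $1/\sigma_i$. Once that change of basis is tracked, no new estimates are needed. A slicker alternative bypasses Corollary~\ref{cor:cond-mat-vec} altogether: one observes directly that $\kappa_{\delta\boldsymbol{x}}(\boldsymbol{A},\boldsymbol{x})=\Vert\boldsymbol{A}\Vert\,\Vert\boldsymbol{x}\Vert/\Vert\boldsymbol{A}\boldsymbol{x}\Vert=\sigma_1\Vert\boldsymbol{x}\Vert/\Vert\boldsymbol{b}\Vert$ and then substitutes $\Vert\boldsymbol{x}\Vert=\Vert\boldsymbol{b}\Vert\sqrt{\sum_i|c_i|^2/\sigma_i^2}$ to obtain the same conclusion in a single line.
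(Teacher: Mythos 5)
Your proposal is correct. The paper's own proof is exactly your ``slicker alternative'': it applies Definition~\ref{def:condition-number} directly to the map $\boldsymbol{x}\mapsto\boldsymbol{A}\boldsymbol{x}$, notes that the supremum of $\Vert\boldsymbol{A}\delta\boldsymbol{x}\Vert/\Vert\delta\boldsymbol{x}\Vert$ is $\Vert\boldsymbol{A}\Vert=\sigma_{1}$, and substitutes $\Vert\boldsymbol{x}\Vert=\Vert\boldsymbol{b}\Vert\sqrt{\sum_{i}\vert c_{i}\vert^{2}/\sigma_{i}^{2}}$ from the proof of Theorem~\ref{thm:condition-number-instance} to get $\kappa_{\delta\boldsymbol{x}}=\Vert\boldsymbol{A}\Vert\sqrt{\sum_{i}\vert c_{i}\vert^{2}/\sigma_{i}^{2}}=\sqrt{\sum_{i}\vert c_{i}\sigma_{1}/\sigma_{i}\vert^{2}}$, in one line. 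Your primary route through Corollary~\ref{cor:cond-mat-vec} is a valid alternative: the change of variables $d_{i}=\Vert\boldsymbol{b}\Vert c_{i}/(\sigma_{i}\Vert\boldsymbol{x}\Vert)$ together with $\sum_{i}\vert c_{i}\vert^{2}=1$ collapses the corollary's formula to $\sigma_{1}\Vert\boldsymbol{x}\Vert/\Vert\boldsymbol{b}\Vert$, which is the same intermediate quantity, so the two arguments differ only in bookkeeping; the corollary detour buys nothing beyond making explicit how the $d_{i}$ expansion of $\boldsymbol{x}$ relates to the $c_{i}$ expansion of $\boldsymbol{b}$. One small point worth flagging: Corollary~\ref{cor:cond-mat-vec} as printed expands $\boldsymbol{x}$ in the left singular vectors $\boldsymbol{u}_{i}$, whereas its formula is correct when $\boldsymbol{x}$ is expanded in the right singular vectors $\boldsymbol{v}_{i}$ (since $\boldsymbol{A}\boldsymbol{v}_{i}=\sigma_{i}\boldsymbol{u}_{i}$); you silently adopted the $\boldsymbol{v}_{i}$ reading, which is the right interpretation, but if you lean on the corollary you should state that correction rather than cite it verbatim.
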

\begin{proof}
By definition,
\[
\kappa_{\delta\boldsymbol{x}}(\boldsymbol{A},\boldsymbol{x})=\lim_{\epsilon\rightarrow0}\sup_{\Vert\delta\boldsymbol{b}\Vert=\epsilon}\left(\left.\frac{\Vert\boldsymbol{A}\delta\boldsymbol{x}\Vert}{\Vert\boldsymbol{b}\Vert}\right/\frac{\Vert\delta\boldsymbol{x}\Vert}{\Vert\boldsymbol{x}\Vert}\right).
\]
Hence, $\kappa_{\delta\boldsymbol{x}}(\boldsymbol{A},\boldsymbol{x})=\Vert\boldsymbol{A}\Vert\sqrt{\sum_{i=1}^{n}\vert c_{i}\vert^{2}/\sigma_{i}^{2}}=\sqrt{\sum_{i=1}^{n}\vert c_{i}\sigma_{1}/\sigma_{i}\vert^{2}}$.
\end{proof}
From Theorems~\ref{thm:condition-number-instance} and \ref{thm:cond-inv},
we obtain the following result, which can be considered as a \emph{conservation
law of the condition number} $\kappa(\boldsymbol{A})$.
\begin{thm}
\label{thm:conservation-condition-number}Given $\boldsymbol{x}=\boldsymbol{A}^{-1}\boldsymbol{b}$
as in Theorems~\ref{thm:condition-number-instance}, $\kappa_{\delta\boldsymbol{b}}(\boldsymbol{A},\boldsymbol{b})\kappa_{\delta\boldsymbol{x}}(\boldsymbol{A},\boldsymbol{x})=\kappa(\boldsymbol{A})$.
\end{thm}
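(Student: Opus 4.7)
The plan is to prove this identity by directly substituting the closed-form expressions for the two intrinsic condition numbers obtained in the preceding theorems and simplifying. Since Theorem~\ref{thm:condition-number-instance} gives $\kappa_{\delta\boldsymbol{b}}(\boldsymbol{A},\boldsymbol{b})=1/\sqrt{\sum_{i=1}^{n}|c_i\sigma_n/\sigma_i|^2}$ and Theorem~\ref{thm:cond-inv} gives $\kappa_{\delta\boldsymbol{x}}(\boldsymbol{A},\boldsymbol{x})=\sqrt{\sum_{i=1}^{n}|c_i\sigma_1/\sigma_i|^2}$ under exactly the same expansion $\boldsymbol{b}=\Vert\boldsymbol{b}\Vert\sum_i c_i\boldsymbol{u}_i$, the two expressions share the same weight vector $(|c_i|/\sigma_i)_{i=1}^{n}$ up to a global scalar.

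Concretely, I would pull the constant factor out of each sum: rewrite $\sum_i|c_i\sigma_n/\sigma_i|^2=\sigma_n^{2}\sum_i|c_i/\sigma_i|^2$ and $\sum_i|c_i\sigma_1/\sigma_i|^2=\sigma_1^{2}\sum_i|c_i/\sigma_i|^2$. Then the product telescopes:
\begin{equation*}
\kappa_{\delta\boldsymbol{b}}(\boldsymbol{A},\boldsymbol{b})\,\kappa_{\delta\boldsymbol{x}}(\boldsymbol{A},\boldsymbol{x})
=\frac{\sigma_1\sqrt{\sum_i|c_i/\sigma_i|^2}}{\sigma_n\sqrt{\sum_i|c_i/\sigma_i|^2}}
=\frac{\sigma_1}{\sigma_n}=\kappa(\boldsymbol{A}),
\end{equation*}
as required. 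A brief sanity check is warranted to ensure the common factor $\sqrt{\sum_i|c_i/\sigma_i|^2}$ is nonzero and finite; nonsingularity of $\boldsymbol{A}$ guarantees $\sigma_i>0$ for all $i$, and the hypothesis $\boldsymbol{b}\ne\boldsymbol{0}$ (carried over from Theorem~\ref{thm:condition-number-instance}) ensures that at least one $c_i$ is nonzero, so the cancellation is legitimate.

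There is no real obstacle here: the work has already been done in Theorems~\ref{thm:condition-number-instance} and \ref{thm:cond-inv}, and the ``conservation law'' is essentially the observation that the SVD-weighted norms appearing in the numerator and denominator differ only by the extreme singular values $\sigma_1$ and $\sigma_n$. The only subtle point worth flagging in the write-up is that the cancellation crucially requires the \emph{same} coefficients $c_i$ in both theorems, i.e., that both $\kappa_{\delta\boldsymbol{b}}$ and $\kappa_{\delta\boldsymbol{x}}$ are evaluated at the matched pair $(\boldsymbol{b},\boldsymbol{x})$ with $\boldsymbol{x}=\boldsymbol{A}^{-1}\boldsymbol{b}$; this is exactly the hypothesis of the theorem, so the identity follows immediately.
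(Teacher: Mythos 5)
Your proposal is correct and follows essentially the same route as the paper: both simply multiply the closed-form expressions from Theorems~\ref{thm:condition-number-instance} and \ref{thm:cond-inv} and cancel the common factor $\sqrt{\sum_i|c_i/\sigma_i|^2}$ to obtain $\sigma_1/\sigma_n$. Your added remarks on nonvanishing of the common factor and on using the same coefficients $c_i$ are sensible but minor elaborations of the paper's one-line argument.
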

\begin{proof}
$\kappa_{\delta\boldsymbol{b}}\kappa_{\delta\boldsymbol{x}}=\sqrt{\sum_{i=1}^{n}\vert c_{i}\sigma_{1}/\sigma_{i}\vert^{2}}/\sqrt{\sum_{i=1}^{n}\vert c_{i}\sigma_{n}/\sigma_{i}\vert^{2}}=\sigma_{1}/\sigma_{n}=\kappa(\boldsymbol{A})$.
\end{proof}
To solidify the understanding of Theorems~\ref{thm:condition-number-instance}--\ref{thm:conservation-condition-number},
let us consider two examples.
\begin{example}
\label{exa:well-conditioned} If a substantial component of $\boldsymbol{b}$
falls within the space corresponding to the smallest singular values,
i.e., there exist $C_{1}\geq1$ and $0<C_{2}$ independently of $\kappa(\boldsymbol{A})$
such that $\sum_{\sigma_{i}/\sigma_{n}\leq C_{1}}\vert c_{i}\vert^{2}\geq C_{2}$,
and then 
\[
\sum_{i=1}^{n}\vert c_{i}\sigma_{n}/\sigma_{i}\vert^{2}\geq\sum_{\sigma_{i}/\sigma_{n}\leq C_{1}}\vert c_{i}\sigma_{n}/\sigma_{i}\vert^{2}\geq\sum_{\sigma_{i}/\sigma_{n}\leq C_{1}}\vert c_{i}/C_{1}\vert^{2}\geq C_{2}/C_{1}^{2},
\]
and then $\kappa_{\delta\boldsymbol{b}}\leq C_{1}/\sqrt{C_{2}}$ and
$\kappa_{\delta\boldsymbol{x}}\geq\sqrt{C_{2}}/C_{1}\kappa(\boldsymbol{A})$.
\end{example}
The ``effective well conditioned'' problems in \cite[Theorem 1]{chan1988effectively}
(i.e., $\boldsymbol{b}$ falls within a subspace corresponding to
small singular values) are special cases of Example~\ref{exa:well-conditioned}
in terms of $\kappa_{\delta\boldsymbol{b}}\leq C_{1}/\sqrt{C_{2}}$. 

The following example analyzes random matrices generated using a generalization
of a technique known as ``randsvd'' (see e.g., \cite[Section 28.3]{higham2002accuracy}
and \cite[eq. (4.1)]{arioli2009using}).
\begin{example}
\label{exa:random}Suppose $c_{i}$ is uniform (i.e., $c_{i}=1/\sqrt{n}$
and $\boldsymbol{b}\approx\sum_{i}\boldsymbol{u}_{i}/\sqrt{n}$) and
the singular values of $\boldsymbol{A}$ are $\sigma_{i}=10^{-\alpha(\frac{i-1}{n-1})^{r}}$.
Then, $\sigma_{1}=1$, $\alpha_{n}=10^{-\alpha}$, and $\kappa(\boldsymbol{A})=10^{\alpha}$.
If $r=1$, then 
\begin{align*}
\sum_{i=1}^{n}\vert c_{i}\sigma_{n}/\sigma_{i}\vert^{2} & =\frac{1}{n}\sum_{i=1}^{n}10^{-2\alpha(\frac{n-i}{n-1})}\\
 & =\frac{1}{n}\sum_{j=0}^{n-1}10^{-2\alpha(\frac{j}{n-1})}\\
 & =\frac{1}{n}\frac{1-10^{-2\alpha(\frac{n}{n-1})}}{1-10^{\frac{2\alpha}{n-1}}}\\
 & \approx\frac{n-1}{2\alpha\ln10n},
\end{align*}
for $1\ll\alpha<\log(\vert\varepsilon_{w}\vert)\ll n/2$, where the
approximate equality is due to the Taylor series of $10^{\frac{2\alpha}{n-1}}$.
Hence, $\kappa_{\delta\boldsymbol{b}}=\mathcal{O}(\log(\kappa(\boldsymbol{A}))$
for $r=1$ and large $n$. Since $\kappa_{\delta\boldsymbol{b}}$
decreases as $r$ decreases, $\kappa_{\delta\boldsymbol{b}}$ is $o(\log(\kappa(\boldsymbol{A}))$
for $r<1$ and it converges to $\mathcal{O}(1)$ as $r\rightarrow0$.
If $r>1$, $\kappa_{\delta\boldsymbol{b}}$ is unbounded by a constant
as $n$ increases, and $\kappa_{\delta\boldsymbol{b}}$ converges
to $\mathcal{O}(\kappa(\boldsymbol{A}))$ as $r\rightarrow\infty$
and $n\rightarrow\infty$. For moderately sized $n$, $\kappa_{\delta\boldsymbol{b}}\leq\sqrt{n}$,
which is significantly smaller than $\kappa(\boldsymbol{A})$ if $\alpha\gg\log_{10}\sqrt{n}$.
In contrast, $\kappa_{\delta\boldsymbol{x}}$ is close to $\kappa(\boldsymbol{A})$
if $r\leq1$ or $\alpha\gg\log_{10}\sqrt{n}$. Hence, randsvd matrices
typically have a small $\kappa_{\delta\boldsymbol{b}}$ but a large
$\kappa_{\delta\boldsymbol{x}}$ (close to $10^{\alpha}$) if $\boldsymbol{b}$
is a random vector with a uniform distribution.
\end{example}
From Theorems~\ref{thm:condition-number-instance}--\ref{thm:conservation-condition-number}
and the above examples, we can see that a large $\kappa(\boldsymbol{A})$
\textsl{always} leads to overestimation of $\kappa_{\delta\boldsymbol{b}}$,
$\kappa_{\delta\boldsymbol{x}}$, or both. For linear systems arising
from well-posed (and even random) problems, $\kappa(\boldsymbol{A})$
always over-estimates $\kappa_{\delta\boldsymbol{b}}$, and the computation
of the residual vector $\boldsymbol{r}=\boldsymbol{b}-\boldsymbol{A}\tilde{\boldsymbol{x}}$
is intrinsically sensitive to the rounding errors in $\delta\boldsymbol{x}=\tilde{\boldsymbol{x}}-\boldsymbol{x}$
and will be as large as $\kappa(\boldsymbol{A})\Vert\delta\boldsymbol{x}\Vert$.
This phenomenon is particularly important in iterative methods that
use the residual as the stopping criteria (as they almost always do
except for EPIR), for which it is essential to consider the stability
of both the forward and backward errors.

\subsection{\label{subsec:Pollution-errors}Pollution in well-posed problems}

Definition~\ref{def:forward-backward-error} allows the consideration
of pollution $\delta\boldsymbol{f}=\tilde{\boldsymbol{f}}-\boldsymbol{f}$.
We now derive some measures for this error and its impact in the context
of (\ref{eq:linear-system}), for which $\delta\boldsymbol{f}\equiv\delta\boldsymbol{A}$.
 Let $\Delta\boldsymbol{x}$ denote the error in the solution of
(\ref{eq:linear-system}) due to both $\delta\boldsymbol{b}$ and
$\delta\boldsymbol{A}$, i.e., 
\begin{equation}
(\boldsymbol{A}+\delta\boldsymbol{A})(\boldsymbol{x}+\Delta\boldsymbol{x})=\boldsymbol{b}+\delta\boldsymbol{b}.\label{eq:total-perturb}
\end{equation}
The following two lemmas allow us to analyze $\Delta\boldsymbol{x}$
based on $\kappa_{\delta\boldsymbol{b}}$.
\begin{lem}
\label{lem:total-perturb}The total error $\Delta\boldsymbol{x}$
in (\ref{eq:total-perturb}) satisfies 
\begin{equation}
\boldsymbol{A}(\boldsymbol{x}+\Delta\boldsymbol{x})=\boldsymbol{b}+\sum_{i=0}^{\infty}\left(-\delta\boldsymbol{A}\boldsymbol{A}^{-1}\right)^{i}\left(\delta\boldsymbol{b}-\delta\boldsymbol{A}\boldsymbol{x}\right).\label{eq:total-perturb2}
\end{equation}
\end{lem}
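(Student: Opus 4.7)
The plan is to start from the perturbed system \eqref{eq:total-perturb}, expand the product, and use $\boldsymbol{A}\boldsymbol{x}=\boldsymbol{b}$ to cancel the unperturbed terms, isolating an equation for $\Delta\boldsymbol{x}$ alone. This yields
\[
(\boldsymbol{A}+\delta\boldsymbol{A})\Delta\boldsymbol{x} = \delta\boldsymbol{b} - \delta\boldsymbol{A}\boldsymbol{x},
\]
which I would then solve for $\boldsymbol{A}\Delta\boldsymbol{x}$ rather than $\Delta\boldsymbol{x}$, since the desired identity is stated in terms of $\boldsymbol{A}(\boldsymbol{x}+\Delta\boldsymbol{x})$.

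Next, I would factor $\boldsymbol{A}+\delta\boldsymbol{A} = (\boldsymbol{I}+\delta\boldsymbol{A}\boldsymbol{A}^{-1})\boldsymbol{A}$, so that
\[
\boldsymbol{A}\Delta\boldsymbol{x} = (\boldsymbol{I}+\delta\boldsymbol{A}\boldsymbol{A}^{-1})^{-1}(\delta\boldsymbol{b}-\delta\boldsymbol{A}\boldsymbol{x}).
\]
Expanding $(\boldsymbol{I}+\delta\boldsymbol{A}\boldsymbol{A}^{-1})^{-1}$ by the Neumann series gives the infinite sum $\sum_{i=0}^{\infty}(-\delta\boldsymbol{A}\boldsymbol{A}^{-1})^{i}$, and adding $\boldsymbol{b}=\boldsymbol{A}\boldsymbol{x}$ to both sides produces exactly \eqref{eq:total-perturb2}.

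The only non-routine step is justifying the Neumann expansion, which requires $\Vert\delta\boldsymbol{A}\boldsymbol{A}^{-1}\Vert<1$ (equivalently, $\rho(\delta\boldsymbol{A}\boldsymbol{A}^{-1})<1$). In the context of the paper this is mild: $\delta\boldsymbol{A}$ represents either rounding errors or a small modeling perturbation, and the implicit assumption $\Vert\delta\boldsymbol{A}\Vert=\mathcal{O}(\varepsilon_{w})\Vert\boldsymbol{A}\Vert$ together with $\kappa(\boldsymbol{A})\varepsilon_{w}<1$ (already in force earlier in the paper) ensures convergence. I would state this explicitly as a standing hypothesis at the start of the proof so that the series manipulation is unambiguous, and mention that under it $(\boldsymbol{A}+\delta\boldsymbol{A})$ is guaranteed invertible, which is also needed for $\Delta\boldsymbol{x}$ to be well-defined in the first place.
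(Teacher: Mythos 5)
Your proposal is correct and follows essentially the same route as the paper: derive $(\boldsymbol{A}+\delta\boldsymbol{A})\Delta\boldsymbol{x}=\delta\boldsymbol{b}-\delta\boldsymbol{A}\boldsymbol{x}$, rewrite $\boldsymbol{A}\Delta\boldsymbol{x}=(\boldsymbol{I}+\delta\boldsymbol{A}\boldsymbol{A}^{-1})^{-1}(\delta\boldsymbol{b}-\delta\boldsymbol{A}\boldsymbol{x})$, expand by the Neumann series, and add $\boldsymbol{A}\boldsymbol{x}=\boldsymbol{b}$ to both sides. Your explicit remark about needing $\Vert\delta\boldsymbol{A}\boldsymbol{A}^{-1}\Vert<1$ for the expansion is a reasonable addition; the paper defers that smallness hypothesis to the following lemma (Lemma~\ref{lem:bound-perturb}) rather than stating it here.
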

\begin{proof}
Since $(\boldsymbol{A}+\delta\boldsymbol{A})\Delta\boldsymbol{x}=\boldsymbol{b}+\delta\boldsymbol{b}-(\boldsymbol{A}+\delta\boldsymbol{A})\boldsymbol{x}=\delta\boldsymbol{b}-\delta\boldsymbol{A}\boldsymbol{x}$,
we obtain $\Delta\boldsymbol{x}=(\boldsymbol{A}+\delta\boldsymbol{A})^{-1}\left(\delta\boldsymbol{b}-\delta\boldsymbol{A}\boldsymbol{x}\right)$
and 
\[
\boldsymbol{A}\Delta\boldsymbol{x}=\boldsymbol{A}(\boldsymbol{A}+\delta\boldsymbol{A})^{-1}\left(\delta\boldsymbol{b}-\delta\boldsymbol{A}\boldsymbol{x}\right)=(\boldsymbol{I}+\delta\boldsymbol{A}\boldsymbol{A}^{-1})^{-1}\left(\delta\boldsymbol{b}-\delta\boldsymbol{A}\boldsymbol{x}\right).
\]
Applying the Taylor series expansion of $(1+x)^{-1}$ or using telescoping,
we obtain
\begin{equation}
\boldsymbol{A}\Delta\boldsymbol{x}=\sum_{i=0}^{\infty}\left(-\delta\boldsymbol{A}\boldsymbol{A}^{-1}\right)^{i}\left(\delta\boldsymbol{b}-\delta\boldsymbol{A}\boldsymbol{x}\right).\label{eq:taylor-series}
\end{equation}
Adding $\boldsymbol{A}\boldsymbol{x}$ and $\boldsymbol{b}$ to the
two sides, respectively, we then obtain (\ref{eq:total-perturb2}).
\end{proof}
Lemma~\ref{lem:total-perturb} is useful because of the tight bound
on the right-hand side of (\ref{eq:total-perturb}) given by the following
lemma.
\begin{lem}
\label{lem:bound-perturb}Suppose $\kappa(\boldsymbol{A})\left\Vert \delta\boldsymbol{A}\right\Vert /\text{\ensuremath{\left\Vert \boldsymbol{A}\right\Vert }}<C$
for some constants $C<1$. Then, 
\begin{equation}
\left\Vert \sum_{i=0}^{\infty}\left(-\delta\boldsymbol{A}\boldsymbol{A}^{-1}\right)^{i}\left(\delta\boldsymbol{b}-\delta\boldsymbol{A}\boldsymbol{x}\right)\right\Vert \leq\frac{1}{1-C}\left\Vert \delta\boldsymbol{b}-\delta\boldsymbol{A}\boldsymbol{x}\right\Vert .\label{eq:bound-rhs}
\end{equation}
\end{lem}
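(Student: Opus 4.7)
The plan is to bound the series on the left-hand side of~(\ref{eq:bound-rhs}) by the geometric (Neumann) series associated with the operator $-\delta\boldsymbol{A}\boldsymbol{A}^{-1}$, and then use submultiplicativity of the 2-norm to relate its operator norm to the quantity $\kappa(\boldsymbol{A})\|\delta\boldsymbol{A}\|/\|\boldsymbol{A}\|$ that is controlled by the hypothesis.

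First, I would establish the single inequality $\|\delta\boldsymbol{A}\boldsymbol{A}^{-1}\| < C$. By submultiplicativity,
\begin{equation*}
\|\delta\boldsymbol{A}\boldsymbol{A}^{-1}\| \;\le\; \|\delta\boldsymbol{A}\|\,\|\boldsymbol{A}^{-1}\| \;=\; \frac{\|\boldsymbol{A}\|\,\|\boldsymbol{A}^{-1}\|\,\|\delta\boldsymbol{A}\|}{\|\boldsymbol{A}\|} \;=\; \frac{\kappa(\boldsymbol{A})\,\|\delta\boldsymbol{A}\|}{\|\boldsymbol{A}\|} \;<\; C \;<\; 1,
\end{equation*}
where the last inequality is exactly the hypothesis of the lemma. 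Second, applying the triangle inequality to the partial sums and then submultiplicativity term-by-term gives
\begin{equation*}
\left\|\sum_{i=0}^{\infty}(-\delta\boldsymbol{A}\boldsymbol{A}^{-1})^{i}\boldsymbol{v}\right\| \;\le\; \sum_{i=0}^{\infty}\|\delta\boldsymbol{A}\boldsymbol{A}^{-1}\|^{i}\,\|\boldsymbol{v}\| \;\le\; \sum_{i=0}^{\infty} C^{i}\,\|\boldsymbol{v}\| \;=\; \frac{\|\boldsymbol{v}\|}{1-C},
\end{equation*}
with $\boldsymbol{v}=\delta\boldsymbol{b}-\delta\boldsymbol{A}\boldsymbol{x}$, which is the claim.

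I do not expect any genuine obstacle here: the only subtlety is justifying that the infinite sum in (\ref{eq:bound-rhs}) is well defined, and this is immediate because the scalar geometric series $\sum_{i\ge 0} C^i$ majorizes the operator series, so the partial sums form a Cauchy sequence in the Banach space $\mathbb{C}^{n}$ and the limit exists. For consistency with the usage in Lemma~\ref{lem:total-perturb}, I would also note that the same bound implies $\|(\boldsymbol{I}+\delta\boldsymbol{A}\boldsymbol{A}^{-1})^{-1}\|\le 1/(1-C)$, which is the standard perturbation-of-identity estimate.
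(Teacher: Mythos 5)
Your proposal is correct and follows essentially the same route as the paper's proof: bound the Neumann series termwise via the triangle inequality and submultiplicativity, using $\Vert\delta\boldsymbol{A}\boldsymbol{A}^{-1}\Vert\leq\kappa(\boldsymbol{A})\Vert\delta\boldsymbol{A}\Vert/\Vert\boldsymbol{A}\Vert<C<1$ to sum the geometric series to $1/(1-C)$. Your explicit remark on convergence of the series is a minor tidying of a point the paper leaves implicit, not a different argument.
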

\begin{proof}
Without loss of generality, assume $\Vert\boldsymbol{b}\Vert\neq0$
(since $\boldsymbol{b}=0$, in general, implies that $\delta\boldsymbol{b}=\boldsymbol{0}$
and $\boldsymbol{x}=\boldsymbol{0}$). Observe that
\begin{align*}
\left\Vert \sum_{i=0}^{\infty}\left(-\delta\boldsymbol{A}\boldsymbol{A}^{-1}\right)^{i}\left(\delta\boldsymbol{b}-\delta\boldsymbol{A}\boldsymbol{x}\right)\right\Vert  & \leq\sum_{i=0}^{\infty}\left\Vert \left(-\delta\boldsymbol{A}\boldsymbol{A}^{-1}\right)^{i}\left(\delta\boldsymbol{b}-\delta\boldsymbol{A}\boldsymbol{x}\right)\right\Vert \\
 & \leq\left\Vert \delta\boldsymbol{b}-\delta\boldsymbol{A}\boldsymbol{x}\right\Vert \sum_{i=0}^{\infty}\left\Vert \delta\boldsymbol{A}\boldsymbol{A}^{-1}\right\Vert ^{i-1}\\
 & =\frac{\left\Vert \delta\boldsymbol{b}-\delta\boldsymbol{A}\boldsymbol{x}\right\Vert }{1-\text{\ensuremath{\left\Vert \delta\boldsymbol{A}\boldsymbol{A}^{-1}\right\Vert }}}\left(1-\text{\ensuremath{\left\Vert \delta\boldsymbol{A}\boldsymbol{A}^{-1}\right\Vert }}^{\infty}\right)\\
 & \leq\frac{1}{1-C}\left\Vert \delta\boldsymbol{b}-\delta\boldsymbol{A}\boldsymbol{x}\right\Vert ,
\end{align*}
where the last inequality is due to $\text{\ensuremath{\left\Vert \delta\boldsymbol{A}\boldsymbol{A}^{-1}\right\Vert }}\leq\kappa(\boldsymbol{A})\left\Vert \delta\boldsymbol{A}\right\Vert /\text{\ensuremath{\left\Vert \boldsymbol{A}\right\Vert }}<C<1$.
\end{proof}
Lemma~\ref{lem:bound-perturb} suggests a measure of sensitivity
in terms of the pollution error $\delta\boldsymbol{A}$.
\begin{defn}
\label{def:ecn}The \emph{intrinsic condition number} (\emph{ICN})
of (\ref{eq:linear-system}) w.r.t. the pollution error $\delta\boldsymbol{A}$
is 
\begin{equation}
\hat{\kappa}_{\delta\boldsymbol{A}}(\boldsymbol{A},\boldsymbol{b})=\lim_{\epsilon\rightarrow0}\sup_{\Vert\delta\boldsymbol{A}\Vert=\epsilon}\left(\left.\frac{\Vert(\boldsymbol{A}+\delta\boldsymbol{A})^{-1}\boldsymbol{b}-\boldsymbol{x}\Vert}{\Vert\boldsymbol{x}\Vert}\right/\frac{\Vert\delta\boldsymbol{A}\boldsymbol{x}\Vert}{\Vert\boldsymbol{b}\Vert}\right).\label{eq:cond-deltaA-1}
\end{equation}
\end{defn}
We emphasize that ICN is not the standard condition number (e.g.,
as in \cite[(18.10)]{trefethen1997numerical}) since the backward
error is not measured as $\Vert\delta\boldsymbol{A}\Vert/\Vert\boldsymbol{A}\Vert$.
Hence, we put a hat on $\kappa$ to avoid confusion. $\hat{\kappa}_{\delta\boldsymbol{A}}$
is useful since $\delta\boldsymbol{A}\boldsymbol{x}$ defines a correlation
of the pollution $\delta\boldsymbol{A}$. The following theorem shows
that $\hat{\kappa}_{\delta\boldsymbol{A}}\leq\kappa_{\delta\boldsymbol{b}}\leq C$
for well-posed problems.
\begin{thm}
\label{thm:cond-num-relation}$\hat{\kappa}_{\delta\boldsymbol{A}}\leq\kappa_{\delta\boldsymbol{b}}$,
as defined in (\ref{eq:cond-deltaA-1}) and (\ref{eq:cond-number}),
respectively.
\end{thm}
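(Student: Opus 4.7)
The plan is to reduce the $\delta\boldsymbol{A}$-perturbation to an equivalent $\delta\boldsymbol{b}$-perturbation. Applying Lemma~\ref{lem:total-perturb} with $\delta\boldsymbol{b}=\boldsymbol{0}$ (i.e., only $\delta\boldsymbol{A}$ is perturbed), the total error $\Delta\boldsymbol{x}\equiv(\boldsymbol{A}+\delta\boldsymbol{A})^{-1}\boldsymbol{b}-\boldsymbol{x}$ satisfies
\[
\boldsymbol{A}\Delta\boldsymbol{x}=-\delta\boldsymbol{A}\boldsymbol{x}+\sum_{i=1}^{\infty}\bigl(-\delta\boldsymbol{A}\boldsymbol{A}^{-1}\bigr)^{i}\bigl(-\delta\boldsymbol{A}\boldsymbol{x}\bigr).
\]
By Lemma~\ref{lem:bound-perturb}, the tail on the right is $\mathcal{O}(\Vert\delta\boldsymbol{A}\boldsymbol{A}^{-1}\Vert\,\Vert\delta\boldsymbol{A}\boldsymbol{x}\Vert)$, so in the limit $\Vert\delta\boldsymbol{A}\Vert\to 0$ required by Definition~\ref{def:ecn}, the leading-order relation $\Delta\boldsymbol{x}=-\boldsymbol{A}^{-1}\delta\boldsymbol{A}\boldsymbol{x}+o(\Vert\delta\boldsymbol{A}\Vert)$ holds uniformly in $\delta\boldsymbol{A}$ of a given norm.

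Next I would introduce $\delta\boldsymbol{b}^{*}\equiv-\delta\boldsymbol{A}\boldsymbol{x}$ and rewrite the ratio appearing inside the supremum in (\ref{eq:cond-deltaA-1}). In the limit, the numerator becomes $\Vert\boldsymbol{A}^{-1}\delta\boldsymbol{b}^{*}\Vert/\Vert\boldsymbol{x}\Vert$ and the denominator is exactly $\Vert\delta\boldsymbol{b}^{*}\Vert/\Vert\boldsymbol{b}\Vert$, so that
\[
\frac{\Vert(\boldsymbol{A}+\delta\boldsymbol{A})^{-1}\boldsymbol{b}-\boldsymbol{x}\Vert/\Vert\boldsymbol{x}\Vert}{\Vert\delta\boldsymbol{A}\boldsymbol{x}\Vert/\Vert\boldsymbol{b}\Vert}\;\longrightarrow\;\frac{\Vert\boldsymbol{A}^{-1}\delta\boldsymbol{b}^{*}\Vert/\Vert\boldsymbol{x}\Vert}{\Vert\delta\boldsymbol{b}^{*}\Vert/\Vert\boldsymbol{b}\Vert}.
\]
Each admissible choice of $\delta\boldsymbol{A}$ therefore produces a specific vector $\delta\boldsymbol{b}^{*}\in\mathbb{C}^{n}$, and the corresponding ratio is one of the quantities over which (\ref{eq:cond-number}) takes its supremum. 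The conclusion $\hat{\kappa}_{\delta\boldsymbol{A}}\leq\kappa_{\delta\boldsymbol{b}}$ then follows by taking the supremum on the left-hand side.

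The main obstacle I foresee is purely technical: carefully justifying the interchange of the limit $\epsilon\to 0$ with the supremum over $\Vert\delta\boldsymbol{A}\Vert=\epsilon$, so that the Neumann-series remainder is genuinely negligible \emph{uniformly} in the admissible $\delta\boldsymbol{A}$. This is exactly what Lemma~\ref{lem:bound-perturb} delivers once we observe that $\Vert\delta\boldsymbol{A}\boldsymbol{A}^{-1}\Vert\leq\kappa(\boldsymbol{A})\Vert\delta\boldsymbol{A}\Vert/\Vert\boldsymbol{A}\Vert\to 0$, so the tail is strictly higher order than $\Vert\delta\boldsymbol{A}\boldsymbol{x}\Vert$. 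A minor second point to mention is that the reduction is only an inequality (not equality) because the set $\{-\delta\boldsymbol{A}\boldsymbol{x}:\Vert\delta\boldsymbol{A}\Vert=\epsilon\}$ is a subset of $\{\delta\boldsymbol{b}:\Vert\delta\boldsymbol{b}\Vert\leq\epsilon\Vert\boldsymbol{x}\Vert\}$, normalized differently, but this is exactly what the $\Vert\boldsymbol{b}\Vert$ normalization in the denominator of $\hat{\kappa}_{\delta\boldsymbol{A}}$ accounts for.
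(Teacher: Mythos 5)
Your argument is correct and follows essentially the same route as the paper: both reduce the $\delta\boldsymbol{A}$-perturbation to an equivalent right-hand-side perturbation via the Neumann series of Lemma~\ref{lem:total-perturb}, control the remainder with Lemma~\ref{lem:bound-perturb}, and compare the resulting ratio against the supremum defining $\kappa_{\delta\boldsymbol{b}}$. The only cosmetic difference is that you work with the leading term $\delta\boldsymbol{b}^{*}=-\delta\boldsymbol{A}\boldsymbol{x}$ and place the (uniformly vanishing, relative to $\Vert\delta\boldsymbol{A}\boldsymbol{x}\Vert$) remainder in the numerator, whereas the paper uses the exact induced perturbation $\delta\boldsymbol{b}=(\boldsymbol{I}+\delta\boldsymbol{A}\boldsymbol{A}^{-1})^{-1}\boldsymbol{b}-\boldsymbol{b}$ and absorbs the remainder into the denominator through the intermediate quantity $\tilde{\kappa}_{\delta\boldsymbol{A}}$.
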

\begin{proof}
Using a similar argument as in Lemma~\ref{lem:total-perturb}, we
obtain 
\[
(\boldsymbol{I}+\delta\boldsymbol{A}\boldsymbol{A}^{-1})^{-1}\boldsymbol{b}-\boldsymbol{b}=-\sum_{i=0}^{\infty}\left(-\delta\boldsymbol{A}\boldsymbol{A}^{-1}\right)^{i}\delta\boldsymbol{A}\boldsymbol{x}.
\]
For any $0<C<1$, $\kappa(\boldsymbol{A})\left\Vert \delta\boldsymbol{A}\right\Vert /\text{\ensuremath{\left\Vert \boldsymbol{A}\right\Vert }}<C$
for a sufficiently small $\varepsilon_{w}$. Using a similar argument
as in Lemma~\ref{lem:bound-perturb}, we obtain 
\[
\left\Vert (\boldsymbol{I}+\delta\boldsymbol{A}\boldsymbol{A}^{-1})^{-1}\boldsymbol{b}-\boldsymbol{b}\right\Vert \leq\frac{1}{1-C}\left\Vert \delta\boldsymbol{A}\boldsymbol{x}\right\Vert .
\]
Hence, the supremum in $\hat{\kappa}_{\delta\boldsymbol{A}}$ is no
greater than $(1-C)$ times of $\tilde{\kappa}_{\delta\boldsymbol{A}}$
defined as 
\begin{equation}
\tilde{\kappa}_{\delta\boldsymbol{A}}(\boldsymbol{A},\boldsymbol{b})=\lim_{\epsilon\rightarrow0}\sup_{\Vert\delta\boldsymbol{A}\Vert=\epsilon}\left(\left.\frac{\Vert(\boldsymbol{A}+\delta\boldsymbol{A})^{-1}\boldsymbol{b}-\boldsymbol{x}\Vert}{\Vert\boldsymbol{x}\Vert}\right/\frac{\Vert(\boldsymbol{I}+\delta\boldsymbol{A}\boldsymbol{A}^{-1})^{-1}\boldsymbol{b}-\boldsymbol{b}\Vert}{\Vert\boldsymbol{b}\Vert}\right).\label{eq:intemediate-cond}
\end{equation}
Taking the limit of $(1-C)$ as $C$ approaches $0$, $\hat{\kappa}_{\delta\boldsymbol{A}}\leq\text{\ensuremath{\tilde{\kappa}_{\delta\boldsymbol{A}}}}$.
Let $\delta\boldsymbol{b}=(\boldsymbol{I}+\delta\boldsymbol{A}\boldsymbol{A}^{-1})^{-1}\boldsymbol{b}-\boldsymbol{b}$
in (\ref{eq:cond-number}), which tends to $\boldsymbol{0}$ as $\Vert\delta\boldsymbol{A}\Vert$
approaches 0, so $\tilde{\kappa}_{\delta\boldsymbol{A}}\leq\kappa_{\delta\boldsymbol{b}}$.
\end{proof}
Theorem~\ref{thm:cond-num-relation} suggests a natural definition
for measuring the correlation of $\delta\boldsymbol{A}$.
\begin{defn}
\label{def:correlation}For a well-posed problem, the rounding errors
$\delta\boldsymbol{A}$ are \emph{strongly correlated} if $\Vert\delta\boldsymbol{A}\boldsymbol{x}\Vert/\Vert\boldsymbol{b}\Vert=\mathcal{O}(\varepsilon_{w})$,
\emph{weakly correlated} if $\Vert\delta\boldsymbol{A}\boldsymbol{x}\Vert/\Vert\boldsymbol{b}\Vert=o(\kappa(\boldsymbol{A})\varepsilon_{w})$,
and \emph{uncorrelated} if $\Vert\delta\boldsymbol{A}\boldsymbol{x}\Vert/\Vert\boldsymbol{b}\Vert=\Omega(\kappa(\boldsymbol{A})\varepsilon_{w})$.
\end{defn}
If the linear system is polluted with uncorrelated $\delta\boldsymbol{A}$,
then backward stability would suffice (within an $\mathcal{O}(1)$
factor). We will focus on cases where $\delta\boldsymbol{A}$ is strongly
correlated, which we informally refer to as \emph{unpolluted}. Under
this assumption, let $\hat{\boldsymbol{A}}=\boldsymbol{A}+\delta\boldsymbol{A}$
and $\hat{\boldsymbol{b}}=\boldsymbol{b}+\delta\boldsymbol{b}$ in
(\ref{eq:total-perturb}). If we solve (\ref{eq:total-perturb}) in
the sense of (\ref{eq:accuracy-forward-backward}), i.e., $\hat{\boldsymbol{A}}\hat{\boldsymbol{x}}=\hat{\boldsymbol{b}}+\boldsymbol{e}$,
where $\left\Vert \hat{\boldsymbol{x}}-(\boldsymbol{x}+\Delta\boldsymbol{x})\right\Vert /\left\Vert \boldsymbol{x}+\Delta\boldsymbol{x}\right\Vert =\mathcal{O}(\varepsilon_{w})$
and $\Vert\boldsymbol{e}\Vert/\Vert\hat{\boldsymbol{b}}\Vert=\mathcal{O}(\varepsilon_{w})$,
then
\[
\frac{\left\Vert \hat{\boldsymbol{x}}-\boldsymbol{x}\right\Vert }{\left\Vert \boldsymbol{x}\right\Vert }=\frac{\left\Vert \hat{\boldsymbol{x}}-\boldsymbol{x}\right\Vert }{\left\Vert \boldsymbol{x}+\Delta\boldsymbol{x}\right\Vert }(1+\mathcal{O}(\varepsilon_{w}))\leq\frac{\left\Vert \hat{\boldsymbol{x}}-(\boldsymbol{x}+\Delta\boldsymbol{x})\right\Vert +\left\Vert \Delta\boldsymbol{x}\right\Vert }{\left\Vert \boldsymbol{x}+\Delta\boldsymbol{x}\right\Vert }(1+\mathcal{O}(\varepsilon_{w}))=\mathcal{O}(\varepsilon_{w}),
\]
and similarly, 
\[
\frac{\left\Vert \hat{\boldsymbol{b}}+\boldsymbol{e}-\boldsymbol{b}\right\Vert }{\left\Vert \boldsymbol{b}\right\Vert }=\frac{\left\Vert \hat{\boldsymbol{b}}+\boldsymbol{e}-\boldsymbol{b}\right\Vert }{\Vert\hat{\boldsymbol{b}}\Vert}(1+\mathcal{O}(\varepsilon_{w}))\leq\frac{\left\Vert \boldsymbol{e}\right\Vert +\Vert\delta\boldsymbol{b}\Vert}{\Vert\hat{\boldsymbol{b}}\Vert}(1+\mathcal{O}(\varepsilon_{w}))=\mathcal{O}(\varepsilon_{w}).
\]
In other words, forward and backward errors are both guaranteed to
be small. Hence, in the remainder of the paper, we will assume that
$\hat{\boldsymbol{A}}=\boldsymbol{A}+\delta\boldsymbol{A}$ and $\hat{\boldsymbol{b}}=\boldsymbol{b}+\delta\boldsymbol{b}$
are the ``ground truth'' when solving (\ref{eq:linear-system})
(as v.N.-G. did in \cite{von1947numerical}), for which the exact
solution will be $\boldsymbol{x}+\Delta\boldsymbol{x}$. To avoid
clustering, we will omit the hats in $\hat{\boldsymbol{A}}$ and $\hat{\boldsymbol{b}}$.

\subsection{\label{subsec:essential-backward-stablity}Essential forward and
backward stability}

We now focus on solving (\ref{eq:linear-system}) with exact input
$\boldsymbol{A}$ and $\boldsymbol{b}$ with small forward and backward
errors.
\begin{defn}
\label{def:fs-ebs}Suppose $\boldsymbol{A}$ and $\boldsymbol{b}$
in (\ref{eq:linear-system}) are exactly represented by a floating-point
system with unit round-off $\varepsilon_{w}$, and let $\boldsymbol{x}_{*}$
denote the exact solution to (\ref{eq:linear-system}). Let $\hat{\boldsymbol{x}}$
denoted a numerical solution, which may be rounded from a higher-precision
solution $\tilde{\boldsymbol{x}}=\hat{\boldsymbol{x}}+\delta\boldsymbol{x}$
where $\vert\delta x_{i}|=\vert x_{i}|\mathcal{O}(\varepsilon_{w})$. 
\begin{itemize}
\item The method is \emph{essentially forward stable} (\emph{EFS}) if $\Vert\hat{\boldsymbol{x}}-\boldsymbol{x}_{*}\Vert\leq\Vert\boldsymbol{x}\Vert\mathcal{O}(\varepsilon_{w})$. 
\item The method is \emph{essentially backward stable} (\emph{EBS}) if the
residual in terms of $\tilde{\boldsymbol{x}}$ is $\mathcal{O}(\varepsilon_{w})$,
i.e., $\Vert\tilde{\boldsymbol{r}}\Vert\leq\Vert\boldsymbol{b}\Vert\mathcal{O}(\varepsilon_{w})$,
where $\tilde{\boldsymbol{r}}=\boldsymbol{b}-\boldsymbol{A}\tilde{\boldsymbol{x}}$.
This bound must be verifiable, even if $\tilde{\boldsymbol{x}}$ is
not formed explicitly.
\item The method is \emph{EFBS} if it is both EFS and EBS.
\end{itemize}
\end{defn}
Definition~\ref{def:fs-ebs} differs from the standard stability
and backward stability in numerical analysis in two senses. First,
the notion of EFS is stronger than stability \cite{golub2013matrix,trefethen1997numerical}
(i.e., $\Vert\tilde{\boldsymbol{x}}-\boldsymbol{x}_{*}\Vert\leq\Vert\boldsymbol{x}\Vert\kappa(\boldsymbol{A})\mathcal{O}(\varepsilon_{w})$)
or ``forward stability'' \cite[p. 9]{higham2002accuracy}. Furthermore,
we consider EFS as a goal separate from EBS, since an EFS solution
may be computed using some components that are not backward stable
or even unstable. Second, we define EBS based on a residual $\boldsymbol{b}-\boldsymbol{A}\tilde{\boldsymbol{x}}$
instead of $\boldsymbol{b}-\boldsymbol{A}\hat{\boldsymbol{x}}$, even
though $\hat{\boldsymbol{x}}$ is the final output. This detail is
important because $\Vert\boldsymbol{b}-\boldsymbol{A}\hat{\boldsymbol{x}}\Vert$
is close to $\mathcal{O}(\kappa(\boldsymbol{A})\varepsilon_{w})$
for well-posed problems, \textsl{even if $\boldsymbol{b}-\boldsymbol{A}\hat{\boldsymbol{x}}$
is computed in exact arithmetic}.

\section{\label{sec:forward-and-backward-errors}Forward and Backward Errors
in RP-GMRES}

We present a new analysis of forward and backward errors in RP-GMRES.
This analysis serves as the guideline in devising FBSMR, but the reader
can safely skip to section~\ref{sec:fbsmr} without compromising
the understanding of the implementation.

\subsection{Trial and test spaces in RP-GMRES}

As we have seen in Algorithm~\ref{alg:PGMRES}, RP-GMRES solves a
\emph{projected least squares }(\emph{PLS}) \emph{problem}, 
\begin{equation}
\boldsymbol{Q}_{k}^{H}\boldsymbol{A}\boldsymbol{Z}_{k}\boldsymbol{y}_{k}=\boldsymbol{Q}_{k}^{H}\boldsymbol{r}\label{eq:petrov-galerkin}
\end{equation}
where $\boldsymbol{Z}_{k}=\boldsymbol{M}^{-1}\boldsymbol{Q}{}_{k-1}$,
and then $\boldsymbol{x}_{k}=\boldsymbol{Z}_{k}\boldsymbol{y}_{k}$.
Using the analogy of variational methods for solving partial differential
equations (PDEs), we refer to the range spaces of $\boldsymbol{Z}_{k}$
and $\boldsymbol{Q}_{k}$ as the \emph{trial} and \emph{test spaces},
respectively. Since $\mathcal{R}(\boldsymbol{Z}_{k})$ and $\mathcal{R}(\boldsymbol{Q}_{k})$
differ substantially in general when $\boldsymbol{M}\neq\boldsymbol{I}$,
we consider RP-GMRES as a Petrov-Galerkin (PG) method.\footnote{Petrov-Galerkin methods are generalizations of Galerkin (aka Ritz-Galerkin)
methods for PDEs in that the test spaces may differ from trial spaces.
CG is a Galerkin-projection method \cite[Section 4.1]{van2003iterative},
and so are multigrid methods for symmetric systems from PDEs (e.g.,
\cite[Section 13]{Saad:2003aa}). } In contrast, unpreconditioned GMRES solves a PLS problem
\begin{equation}
\boldsymbol{Q}_{k}^{H}\boldsymbol{A}\boldsymbol{Q}_{k-1}\boldsymbol{y}_{k}=\boldsymbol{Q}_{k}^{H}\boldsymbol{r},\label{eq:projected-least-squares}
\end{equation}
for which the trial and test spaces are $\mathcal{R}(\boldsymbol{Q}_{k-1})$
and $\mathcal{R}(\boldsymbol{Q}_{k})$, respectively. Note that $\mathcal{R}(\boldsymbol{Q}_{k-1})=\mathcal{R}(\boldsymbol{Q}_{k})$
at convergence (or breakdown) in exact arithmetic, so GMRES is a Galerkin
(or ``quasi-Galerkin'') method.\footnote{LP-GMRES may also be considered a PG method in terms of $\boldsymbol{A}\boldsymbol{x}=\boldsymbol{b}$
where the trial and test spaces are $\mathcal{R}(\boldsymbol{Q}_{k-1})$
and $\mathcal{R}(\boldsymbol{M}^{-H}\boldsymbol{Q}_{k})$, respectively,
although it is typically treated as a quasi-Galerkin method in terms
of $\boldsymbol{M}^{-1}\boldsymbol{A}\boldsymbol{x}=\boldsymbol{M}^{-1}\boldsymbol{b}$.} Due to this difference, RP-GMRES requires more sophisticated definitions
and analyses for its trial and test spaces compared to those in \cite{drkovsova1995numerical,paige2006modified,Saad:2003aa}.
To this end, let us first refine the definition of the Krylov space
by taking into account the orthogonalization in Arnoldi iterations.
\begin{defn}
\label{def:Arnoldi-Krylov}Given $\boldsymbol{A}\in\mathbb{C}^{n\times n}$
and $\boldsymbol{v}\in\mathbb{C}^{n}$, the \emph{Arnoldi-Krylov subspace}
(\emph{AKS}) is given by
\begin{equation}
\mathcal{K}_{k}(\boldsymbol{A},\boldsymbol{v})=\langle\boldsymbol{q}_{0},\boldsymbol{A}\boldsymbol{q}_{0},\dots,\boldsymbol{A}^{k-1}\boldsymbol{q}_{k-1}\rangle,\label{eq:Arnoldi-Krylov}
\end{equation}
where $\boldsymbol{q}_{0}=\boldsymbol{v}/\Vert\boldsymbol{v}\Vert$,
$\Vert\boldsymbol{q}_{i}\Vert=1$, and $\boldsymbol{q}_{i+1}\in\boldsymbol{P}_{i}^{\perp}\mathcal{K}_{i}(\boldsymbol{A},\boldsymbol{v})$
for $i\geq1$, where $\boldsymbol{P}_{i}^{\perp}$ is the orthogonal
projector $\boldsymbol{P}_{i}^{\perp}=\boldsymbol{I}-\boldsymbol{Q}_{i-1}\boldsymbol{Q}_{i-1}^{H}=\boldsymbol{I}-\sum_{j=0}^{i-1}\boldsymbol{q}_{j}\boldsymbol{q}_{j}^{H}$
and $\boldsymbol{Q}_{i-1}\equiv[\boldsymbol{q}_{0},\boldsymbol{q}_{1},\dots,\boldsymbol{q}_{i-1}]$.
We refer to $\{\boldsymbol{q}_{i}\mid0\leq i\leq k-1\}$ as the \emph{generating
vectors} of $\mathcal{K}_{k}$.
\end{defn}
RP-GMRES involves the following two subspaces.
\begin{defn}
\label{def:primal-dual}Given $\boldsymbol{A}\in\mathbb{C}^{n\times n}$,
a nonsingular right preconditioner $\boldsymbol{M}\in\mathbb{C}^{n\times n}$,
and $\boldsymbol{v}\in\mathbb{C}^{n}$, the \emph{primal Arnoldi-Krylov
subspace} (\emph{PAKS}) is
\begin{equation}
\mathcal{K}_{k}(\boldsymbol{A}\boldsymbol{M}^{-1},\boldsymbol{v})=\langle\boldsymbol{q}_{0},\boldsymbol{A}\boldsymbol{M}^{-1}\boldsymbol{q}_{0},\dots,\left(\boldsymbol{A}\boldsymbol{M}^{-1}\right)^{k}\boldsymbol{q}_{k-1}\rangle,\label{eq:Primary-Arnoldi-Krylov}
\end{equation}
as defined in (\ref{eq:Arnoldi-Krylov}). The \emph{dual Krylov subspace}
(\emph{DKS}) is 
\begin{align}
\mathcal{D}_{k}(\boldsymbol{A},\boldsymbol{M},\boldsymbol{v}) & =\boldsymbol{M}^{-1}\mathcal{K}_{k}(\boldsymbol{A}\boldsymbol{M}^{-1},\boldsymbol{v})\equiv\mathcal{K}_{k}(\boldsymbol{M}^{-1}\boldsymbol{A},\boldsymbol{M}^{-1}\boldsymbol{v}),\label{eq:dual-space}
\end{align}
where the $\boldsymbol{q}_{i}$ are the same as the generating vectors
of the primal subspace.
\end{defn}
Note that the generating vectors of $\mathcal{D}_{k}$ (i.e., $\{\boldsymbol{M}^{-1}\boldsymbol{q}_{i}\}$)
are nonorthogonal, and hence we can only refer to $\mathcal{D}_{k}$
as a Krylov subspace instead of AKS. 

\subsection{\label{subsec:Stability-of-residual}Stability of PLS in RP-GMRES}

The analogy of RP-GMRES with PG allows us to gain some insights in
terms of its stability. First, observe that the basis vectors in PG
for PDEs (such as some finite element methods \cite{brenner2008mathematical})
in general do not need orthogonal basis functions. Hence, we assert
that the orthogonality of $\boldsymbol{Q}_{k}$ plays a minor role
for the stability of RP-GMRES. This assertion seemingly contradicts
the conventional wisdom of GMRES \cite{arioli2009using,drkovsova1995numerical,paige2006modified},
but it should not be surprising since QMR \cite{freund1993transpose},
which is also a PG method \cite{van2003iterative}, does not use orthogonal
basis vectors either and often converges (although less robustly than
GMRES) in practice. The mathematical reason is that the loss of orthogonality
of $\boldsymbol{Q}_{k}$ would lead to a quasi-minimization (QM, instead
of $\ell_{2}$ minimization). As long as QM leads to a reduction of
the true residual, then RP-GMERS can continue making progress, as
we show in Theorem~\ref{thm:restarted-rpgmres} below.

Second, in PG the trial space is the most critical for the accuracy
of the solution (see e.g., \cite{conley2020hybrid}). This statement
partially explains the success of GMRES-IR using LP-GMRES, of which
the trial space is $\mathcal{R}(\boldsymbol{Q}_{k-1})$ with orthonormal
basis vectors. For RP-GMRES, we assert that the key to achieve EFBS
is to stabilize the projection of $\boldsymbol{A}$ onto $\mathcal{D}_{k}$.
However, this projection cannot be stabilized by making $\mathcal{D}_{k}$
an AKS, because Arnoldi orthogonalization would only help achieve
backward stability unless it is done in higher precision, which we
want to avoid. Instead, we will achieve EFBS by stabilizing the projection
based on the following theorem.
\begin{thm}
\label{thm:restarted-rpgmres}In RP-GMRES with restart, let $\boldsymbol{r}_{0,k}$
denote the residual vector at the beginning since the most recent
restart (or start) and $s_{k}$ denote the number of steps since the
most recent restart. Suppose PLS in (\ref{eq:petrov-galerkin}) solves
a quasi-minimization
\begin{equation}
\boldsymbol{y}_{k}=\arg\min_{\boldsymbol{y}}\Vert\boldsymbol{r}_{0,k}-\boldsymbol{A}\boldsymbol{Z}_{k}\boldsymbol{y}\Vert_{\boldsymbol{W}}\label{eq:leastsq-alt}
\end{equation}
for some weighted semi-norm $\Vert\boldsymbol{v}\Vert_{\boldsymbol{W}}\equiv\Vert\boldsymbol{W}^{H}\boldsymbol{x}\Vert$,
such that $\delta\boldsymbol{x}_{k}=\boldsymbol{Z}_{k}\boldsymbol{y}_{k}$
reduces $\Vert\boldsymbol{b}-\boldsymbol{A}\boldsymbol{x}_{k}\Vert$
(in exact arithmetic) by $\Vert\boldsymbol{b}-\boldsymbol{A}\boldsymbol{x}_{k}\Vert/\Vert\boldsymbol{b}-\boldsymbol{A}\boldsymbol{x}_{k-s_{k}}\Vert\leq C$
for some $C<1$ before restart. Then, the backward error $\Vert\boldsymbol{b}-\boldsymbol{A}\boldsymbol{x}_{k}\Vert/\Vert\boldsymbol{b}\Vert$
converges to $\varepsilon_{w}$ after $\log_{C}(\varepsilon_{w})-\log_{C}\left(\Vert\boldsymbol{b}\Vert/\Vert\boldsymbol{b}-\boldsymbol{A}\boldsymbol{x}_{0}\Vert\right)$
restarts. For well-posed problems, the forward error $\Vert\boldsymbol{x}-\boldsymbol{x}_{k}\Vert/\Vert\boldsymbol{x}\Vert$
also converges to $\mathcal{O}(\varepsilon_{w})$ in exact arithmetic.
\end{thm}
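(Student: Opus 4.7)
The plan is to combine the assumed geometric reduction of the true residual with the well-posedness hypothesis formalized through the intrinsic condition number. The argument splits cleanly into a backward-error count, which is a geometric recursion in the restart index, and a forward-error bound, which is a single application of the definition of $\kappa_{\delta\boldsymbol{b}}$ given earlier in the paper.

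First, I would chain the per-cycle reduction hypothesis. Let $\boldsymbol{r}_{(m)}$ denote the true residual at the end of the $m$-th restart cycle, so $\boldsymbol{r}_{(0)}=\boldsymbol{b}-\boldsymbol{A}\boldsymbol{x}_{0}$. The assumption $\Vert\boldsymbol{b}-\boldsymbol{A}\boldsymbol{x}_{k}\Vert/\Vert\boldsymbol{b}-\boldsymbol{A}\boldsymbol{x}_{k-s_{k}}\Vert\leq C$ applied to the terminal iterate of each cycle gives $\Vert\boldsymbol{r}_{(m)}\Vert\leq C\Vert\boldsymbol{r}_{(m-1)}\Vert$, and iterating yields $\Vert\boldsymbol{r}_{(m)}\Vert\leq C^{m}\Vert\boldsymbol{r}_{(0)}\Vert$. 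Imposing the target $\Vert\boldsymbol{r}_{(m)}\Vert/\Vert\boldsymbol{b}\Vert\leq\varepsilon_{w}$ and taking $\log_{C}$ (which reverses the inequality since $0<C<1$) would deliver the stated count of restarts, up to $\mathcal{O}(1)$ rounding of the integer index. Note that it is the \emph{true} residual, not the computed PLS residual, which is being reduced here, so the argument does not rely on any orthogonality or minimization over an AKS.

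Second, for the forward-error claim in exact arithmetic, I would apply Definition~\ref{def:condition-number} and Theorem~\ref{thm:condition-number-instance}. Writing $\boldsymbol{x}-\boldsymbol{x}_{k}=\boldsymbol{A}^{-1}\boldsymbol{r}_{(m)}$ and identifying $\boldsymbol{r}_{(m)}$ with a perturbation $\delta\boldsymbol{b}$ of $\boldsymbol{b}$, the definition of $\kappa_{\delta\boldsymbol{b}}$ immediately gives
\begin{equation*}
\frac{\Vert\boldsymbol{x}-\boldsymbol{x}_{k}\Vert}{\Vert\boldsymbol{x}\Vert}\leq \kappa_{\delta\boldsymbol{b}}(\boldsymbol{A},\boldsymbol{b})\,\frac{\Vert\boldsymbol{r}_{(m)}\Vert}{\Vert\boldsymbol{b}\Vert}
\end{equation*}
in the limit $\Vert\boldsymbol{r}_{(m)}\Vert\to 0$. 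Hadamard well-posedness implies $\kappa_{\delta\boldsymbol{b}}=\mathcal{O}(1)$, so once the backward error reaches $\mathcal{O}(\varepsilon_{w})$ from the first part, the forward error must also be $\mathcal{O}(\varepsilon_{w})$.

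The main obstacle is essentially bookkeeping: the residual reduction is stated per restart cycle rather than per Arnoldi step, so one must be careful to count cycles and to invoke the reduction only at the boundaries. The one genuinely mathematical ingredient, rather than routine arithmetic, is the separation between $\kappa_{\delta\boldsymbol{b}}$ and $\kappa(\boldsymbol{A})$ developed in Section~\ref{subsec:intrinsic-condition-numbers}, which is exactly what guarantees that bounded backward error implies bounded forward error without a spurious factor of $\kappa(\boldsymbol{A})$; in finite precision the same step would additionally require that rounding errors in computing $\boldsymbol{A}\boldsymbol{x}_{k}$ do not corrupt the reduction, which is why the forward-error half of the statement is restricted to exact arithmetic.
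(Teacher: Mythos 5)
Your proposal is correct and follows essentially the same route as the paper: chain the per-cycle reduction to get $\Vert\boldsymbol{r}_{(m)}\Vert\leq C^{m}\Vert\boldsymbol{r}_{(0)}\Vert$, take $\log_{C}$ for the restart count, and then bound the forward error by $\kappa(\boldsymbol{A},\boldsymbol{b})$ (your $\kappa_{\delta\boldsymbol{b}}$) times the backward error, which is $\mathcal{O}(1)$ by well-posedness. The only cosmetic difference is your ``in the limit $\Vert\boldsymbol{r}_{(m)}\Vert\to 0$'' hedge, which is unnecessary since for the linear map $\delta\boldsymbol{b}\mapsto\boldsymbol{A}^{-1}\delta\boldsymbol{b}$ the supremum in Definition~\ref{def:condition-number} is independent of $\epsilon$, so the bound holds at any finite residual.
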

\begin{proof}
By assumption, after $m$ restarts, the residual $\Vert\boldsymbol{b}-\boldsymbol{A}\boldsymbol{x}_{k}\Vert\leq C^{m}\Vert\boldsymbol{b}-\boldsymbol{A}\boldsymbol{x}_{0}\Vert$.
Hence, reducing the residual to $\varepsilon_{w}\Vert\boldsymbol{b}\Vert$
requires at most $\log_{C}(\varepsilon_{w}\Vert\boldsymbol{b}\Vert/\Vert\boldsymbol{b}-\boldsymbol{A}\boldsymbol{x}_{0}\Vert)=\log_{C}(\varepsilon_{w})-\log_{C}(\Vert\boldsymbol{b}\Vert/\Vert\boldsymbol{b}-\boldsymbol{A}\boldsymbol{x}_{0}\Vert)$
steps. Since $\Vert\boldsymbol{x}-\boldsymbol{x}_{k}\Vert/\Vert\boldsymbol{x}\Vert\leq\kappa(\boldsymbol{A},\boldsymbol{b})\Vert\boldsymbol{b}-\boldsymbol{A}\boldsymbol{x}_{k}\Vert/\Vert\boldsymbol{b}\Vert$,
where $\kappa(\boldsymbol{A},\boldsymbol{b})=\mathcal{O}(1)$ for
well-posed problems, the forward error (barring pollution in input
$\boldsymbol{A}$) also converges to $\mathcal{O}(\varepsilon_{w})$.
\end{proof}
In Theorem~\ref{thm:restarted-rpgmres}, QM in general does result
in a reduction in $\Vert\boldsymbol{r}_{0,k}-\boldsymbol{A}\boldsymbol{Z}_{k}\boldsymbol{y}\Vert$
at the $k$th step for small $k$ because $\Vert\boldsymbol{r}_{0,k}-\boldsymbol{A}\boldsymbol{Z}_{k}\boldsymbol{y}\Vert\lesssim\sqrt{k}\Vert\boldsymbol{r}_{0,k}-\boldsymbol{A}\boldsymbol{Z}_{k}\boldsymbol{y}\Vert_{\boldsymbol{W}}$
if $\boldsymbol{W}$ is composed of $k$ unit vectors. This argument
is also fundamental in QMR \cite{qmr}, especially for its convergence
criteria. Note that the $\sqrt{k}$ factor may be larger in the presence
of rounding errors, since $\boldsymbol{W}^{H}\boldsymbol{A}\boldsymbol{Z}_{k}$
is not a Hessenberg matrix anymore in general but we treat it as a
Hessenberg matrix (the same also holds in QMR, which treats a non-tridiagonal
matrix as tridiagonal in the presence of rounding errors). The orthogonality
of the vectors in $\boldsymbol{W}$ (i.e., $\boldsymbol{Q}_{k}$)
can reduce the $\sqrt{k}$ factor to a number closer to $1$, but
its effect is not as significant as the stability of the projection
of $\boldsymbol{A}$ onto $\mathcal{D}_{k}$. Hence, we expect that
it suffices to use the working precision when performing QM and to
use the unstable but more parallelization-friendly CGS in place of
MGS in Arnoldi orthogonalization, as we will demonstrate in section~\ref{subsec:Comparison-of-CGS}.
By assuming exact arithmetic when computing $\boldsymbol{b}-\boldsymbol{A}\boldsymbol{x}_{k}=\boldsymbol{r}_{0,k}-\boldsymbol{A}\boldsymbol{Z}_{k}\boldsymbol{y}$,
we ensure that the reduction in $\Vert\boldsymbol{r}_{0,k}-\boldsymbol{A}\boldsymbol{Z}_{k}\boldsymbol{y}\Vert$
translates to the reduction in $\Vert\boldsymbol{b}-\boldsymbol{A}\boldsymbol{x}_{k}\Vert$,
independently of $\kappa(\boldsymbol{A})$. In practice, ``exact
arithmetic'' can be replaced with $\varepsilon_{w}^{2}$ precision
(assuming $\kappa(\boldsymbol{A})\leq1/\varepsilon_{w}$) when computing
$\boldsymbol{A}\boldsymbol{Z}_{k}$, $\delta\boldsymbol{x}_{k}\equiv\boldsymbol{Z}_{k}\boldsymbol{y}_{k}$,
$\boldsymbol{x}_{k}\equiv\boldsymbol{x}_{k-s_{k}}+\delta\boldsymbol{x}_{k}$,
and $\boldsymbol{b}-\boldsymbol{A}\boldsymbol{x}_{k}$. This completes
the derivation of FBSMR.
\begin{rem}
Theorem~\ref{thm:restarted-rpgmres} shares some similarities in
spirit with \cite[Theorem 2.7]{demmel1997applied} and \cite[Theorem 2.1]{carson2017new},
which focus on forward errors in EPIR \cite{demmel2006error}. The
residual in EPIR is evaluated in higher precision, but its accuracy
is only guaranteed to about $\kappa(\boldsymbol{A})\varepsilon_{w}$
for well-posed problems. This inaccuracy prevented the use of residuals
as the convergence criteria in EPIR. By evaluating $\boldsymbol{x}_{k}$
in higher precision (at a minimal cost), we can compute the residual
to $\mathcal{O}(\varepsilon_{w})$, so that the residual can be used
as the convergence criteria in Theorem~\ref{thm:restarted-rpgmres}
and in FBSMR. Since both $\boldsymbol{x}_{k}$ and $\boldsymbol{r}_{0,k}$
are stabilized, we refer to our approach as \emph{forward and backward
stabilization} (\emph{FBS}).
\end{rem}

\section{\label{sec:fbsmr}Forward and Backward Stabilized Minimal Residual}

Based on the theory in sections~\ref{sec:efbs} and \ref{sec:forward-and-backward-errors},
we propose \emph{Forward-Backward-Stabilized Minimal Residual} or
\emph{FBSMR} as outlined in Algorithm~\ref{alg:FBSMR}. We consider
FBSMR as a conceptual (albeit not algorithmic, since we do not use
Lanczos biorthogonalization) hybridization of RP-GMRES with QMR since
the PLS in FBSMR is only a quasi-minimization. For this reason, we
use MR (instead of GMRES or QMR) as the base of its name.

\begin{algorithm}
\caption{\label{alg:FBSMR}FBSMR: Forward-Backward-Stabilized Minimal Residual}

\hspace*{\algorithmicindent} \textbf{Input}: $\boldsymbol{A}\in\mathbb{F}_{\varepsilon_{w}}^{n\times n}$;
$\boldsymbol{b}\in\mathbb{F}_{\varepsilon_{w}}^{n\times n}$; $\boldsymbol{M}\in\mathbb{F}_{\sqrt{\varepsilon_{w}}}^{n\times n}$;
$\tau$ ($10\varepsilon_{w}$); $\text{restart}$ ($30$); $\text{maxit}$
(500) \\
\hspace*{\algorithmicindent} \textbf{Output}: $\hat{\boldsymbol{x}}\in\mathbb{F}_{\varepsilon_{w}}^{n}$;
optionally $\tilde{\boldsymbol{x}}\in\mathbb{F}_{\varepsilon_{w}^{2}}^{n\times n}$
and $\gamma\equiv\Vert\boldsymbol{b}-\boldsymbol{A}\tilde{\boldsymbol{x}}\Vert/\Vert\boldsymbol{b}\Vert$

\begin{algorithmic}[1]

\State $\beta_{0}\leftarrow\Vert\boldsymbol{b}\Vert$; $\boldsymbol{x}_{0}\leftarrow\boldsymbol{M}^{-1}\boldsymbol{b}$;
$\boxed{\tilde{\boldsymbol{x}}\leftleftarrows\boldsymbol{x}_{0}}$;
$\boldsymbol{r}\leftarrow\boxed{\boldsymbol{b}-\boldsymbol{A}\tilde{\boldsymbol{x}}}$;
$\beta\leftarrow\Vert\boldsymbol{r}\Vert$; $\text{it}\leftarrow0$

\While{ $\text{it}<\text{maxit}$}

\State $\boldsymbol{Q}_{0}\leftarrow[\boldsymbol{r}/\beta]$; $\boldsymbol{g}\leftarrow\beta\boldsymbol{e}_{1}$

\For{ $k=1,\dots,\text{restart}$}

\State\label{ln:mat-vec-prod}$\text{it}\leftarrow\text{it}+1$;
$\boldsymbol{z}_{k}\leftarrow\boldsymbol{M}^{-1}\boldsymbol{q}_{k-1}$;
$\boldsymbol{w}\leftarrow\boxed{\boldsymbol{A}\boldsymbol{z}_{k}}$
\Comment{$\boldsymbol{Z}_{k}\equiv[\boldsymbol{z}_{1},\dots,\boldsymbol{z}_{k}]$}

\State\label{ln:Arnoldi}$\boldsymbol{h}\leftarrow\boldsymbol{Q}_{k-1}^{H}\boldsymbol{w}$;
$\boldsymbol{w}\leftarrow\boldsymbol{w}-\boldsymbol{Q}_{k-1}\boldsymbol{h}$;
$\alpha\leftarrow\Vert\boldsymbol{w}\Vert$; $\boldsymbol{q}_{k}\leftarrow\boldsymbol{w}/\alpha$

\State\label{ln:givens-1}$\hat{\boldsymbol{r}}_{1:k,k}\leftarrow\hat{\boldsymbol{\Omega}}_{k-1}\dots\hat{\boldsymbol{\Omega}}_{1}\boldsymbol{h}$;
$[\boldsymbol{\Omega}_{k},\hat{r}_{k,k}]\leftarrow\text{GR}\left(\hat{r}_{k,k},\alpha\right)$;
$\boldsymbol{g}_{k:k+1}\leftarrow\boldsymbol{\Omega}_{k}\boldsymbol{g}_{k:k+1}$

\State\label{ln:approx-conv}\textbf{break if }$\left|g_{k+1}\right|\leq\tau\beta_{0}$
\Comment{Approximate convergence check}

\EndFor

\State\label{ln:compute-solution-1}$\boldsymbol{y}_{k}\leftarrow\hat{\boldsymbol{R}}_{1:k,1:k,}^{-1}\boldsymbol{g}_{1:k}$;
$\boxed{\tilde{\boldsymbol{x}}\leftleftarrows\tilde{\boldsymbol{x}}+\boldsymbol{Z}_{k}\boldsymbol{y}_{k}}$;
$\boldsymbol{r}\leftarrow\boxed{\boldsymbol{b}-\boldsymbol{A}\tilde{\boldsymbol{x}}}$\Comment{Update
sol. and res.}

\State\label{ln:update-residual-1}$\beta\leftarrow\Vert\boldsymbol{r}\Vert$;
$\gamma\leftarrow\beta/\beta_{0}$ \Comment{Compute backward error}

\If{\label{ln:true-conv} $\gamma\le\tau$}

\State $\hat{\boldsymbol{x}}\leftarrow\tilde{\boldsymbol{x}}$; \textbf{break
}\Comment{Converged}

\Else

\State\textbf{continue} \Comment{Restart}

\EndIf

\EndWhile

\end{algorithmic}
\end{algorithm}

In FBSMR, the input $\boldsymbol{A}$ and $\boldsymbol{b}$ are floating-point
real or complex numbers in working precision. Typically, $\boldsymbol{A}$
should be unpolluted, although the algorithm itself does not depend
on this property. The approximate-inverse preconditioner $\boldsymbol{M}$
can be in a lower precision with unit roundoff $\sqrt{\varepsilon_{w}}$,
but it may also be in another precision (such as $\varepsilon_{w}$
or $\sqrt[4]{\varepsilon_{w}}$; we will pursue the latter in future
work). The algorithm has three control parameters: $\tau$ (the convergence
tolerance), $\text{restart}$ (the maximum number of iterations before
restart), and $\text{maxit}$ (the maximum number of iterations).
The recommended default values are $\tau=10\varepsilon_{w}$, restart=30,
and maxit=500. The algorithm returns an approximate solution $\hat{\boldsymbol{x}}$
in working precision. Optionally, FBSMR returns $\tilde{\boldsymbol{x}}$
in higher precision and/or the relative residual $\gamma$ computed
from $\tilde{\boldsymbol{x}}$. In general, $\tilde{\boldsymbol{x}}$
is no more accurate than $\hat{\boldsymbol{x}}$, but it is required
to evaluate the residual accurately for convergence criteria (line~\ref{ln:true-conv})
or for verifying EFBS as discussed in section~\ref{subsec:essential-backward-stablity}.
Note that line \ref{ln:approx-conv} is an approximate check due to
quasi-minimization.

An important detail in Algorithm~\ref{alg:FBSMR} is that we use
$\boldsymbol{M}^{-1}\boldsymbol{b}$ as the initial guess, even though
$\boldsymbol{M}$ in general may be a grossly inaccurate approximation
to $\boldsymbol{A}$. This choice is different from the standard practice
of GMRES \cite{Saad:2003aa}, although it is consistent with the practice
in IR (see e.g., \cite{arioli2009using}). We will give heuristic
and numerical justifications for its use in the context of RP-GMRES
in section~\ref{subsec:Effect-of-initial}. Also note that in line~\ref{ln:Arnoldi},
either MGS or CGS can be used without compromising EFBS due to Theorem~\ref{thm:restarted-rpgmres}.
It is advisable to use MGS in serial but CGS in parallel (see section~\ref{subsec:Comparison-of-CGS}).

In terms of implementation, only $\tilde{\boldsymbol{x}}$ needs to
be stored in higher precision; we use `$\leftleftarrows$' to emphasize
the assignment in higher precision. Among the arithmetic operations,
only the gaxpy operations (including matrix-vector multiplications)
that are boxed are in higher-precision. For example, if the working
precision is double precision, then the preconditioner $\boldsymbol{M}$
would be in single precision, and the gaxpy operations can be in double-double
precision \cite{briggs1998doubledouble}. If the working precision
is single precision, then the preconditioner $\boldsymbol{M}$ may
be in half precision, and the gaxpy operations can be in double precision.
If an operand for high-precision operation is stored in working precision,
we convert the numbers into higher precision in a just-in-time fashion
before each floating-point arithmetic, which is easy to implement
for gaxpy.

Another implementation detail is the computation of the Givens rotations
$\boldsymbol{\Omega}_{i}$ in line~\ref{ln:givens-1}. For complex
matrices, $\boldsymbol{\Omega}_{i}=\begin{bmatrix}c & s\\
-\bar{s} & c
\end{bmatrix}$, where $c\in\mathbb{R}$ and $s\in\mathbb{C}$ such that $\boldsymbol{\Omega}_{i}\begin{bmatrix}h_{k,k}\\
\alpha
\end{bmatrix}=\begin{bmatrix}r\\
0
\end{bmatrix}$ for $r\in\mathbb{C}$. The choice of $c,$ $s$, and $r$ is not
unique. Following \cite{bindel2002computing,basic2002basic}, we choose
$r=\text{sign}(h_{k,k})r_{0}$ with $r_{0}=\sqrt{\left|h_{k,k}\right|^{2}+\alpha^{2}}$
and $\text{sign}(x)=\begin{cases}
x/\vert x\vert & x\neq0\\
1 & x=0
\end{cases}$, and then $c=\vert h_{k,k}\vert/r_{0}$ and $s=\text{sign}(h_{k,k})\alpha/r_{0}$.
$\hat{\boldsymbol{\Omega}}_{i}=\begin{bmatrix}\boldsymbol{I}_{i-1}\\
 & \boldsymbol{\Omega}_{i}\\
 &  & \boldsymbol{I}_{k-i}
\end{bmatrix}$ expands $\boldsymbol{\Omega}_{i}$ to $(k+1)\times(k+1)$ dimensions
at the $k$th step and $\hat{\boldsymbol{Q}}_{k}=\hat{\boldsymbol{\Omega}}_{1}\dots\hat{\boldsymbol{\Omega}}_{k}$
as in Algorithm~\ref{alg:PGMRES}. The residual norm of (\ref{eq:petrov-galerkin})
is estimated as $\left|\boldsymbol{e}_{k+1}^{T}\boldsymbol{g}\right|=\left|\boldsymbol{e}_{k+1}^{T}\hat{\boldsymbol{Q}}_{k}^{H}(\beta\boldsymbol{e}_{1})\right|$,
where $\boldsymbol{g}$ is updated incrementally.

If the approximate inverse is accurate enough or the restart value
is large enough so that the true residual can be reduced before each
restart, then Theorem~\ref{thm:restarted-rpgmres} guarantees convergence
of FBSMR. We will demonstrate numerically that it is indeed the case
even when $\kappa(\boldsymbol{A})$ is close to $1/\varepsilon_{w}$.

\section{\label{sec:numerical-experiments}Numerical Experimentations}

In this section, we report some numerical results using both random
and realistic matrices from PDEs. We implemented FBSMR in MATLAB.
For double-double precision, we implemented the addition and multiplication
operations in MATLAB based on the algorithm described in \cite{hida2007library}
and then converted some computationally intensive parts into C++ using
MATLAB Coder \cite{MATLAB:R2022b}. Although efficiency is one of
our goals, we do not report runtimes in this work since the implementation
is not yet fully optimized. We plan to release the optimized implementation
in C++ as an open-source library and will report it elsewhere. 

\subsection{\label{subsec:Verification-of-EFBS}Verification of EFBS and convergence
criteria}

We first verify our analysis of EFBS and algorithm FBSMR by presenting
some results using ``randsvd'' matrices as described in Example~\ref{exa:random}.
To this end, we focus on the effect of stabilizing forward and backward
errors in FBSMR, so we use RP-GMRES (as in \cite{arioli2009using})
as the baseline for comparison. In all our tests, we used double-precision
LU in MATLAB \cite{MATLAB:R2022b} to compute the factorization. In
this case, FBSMR can be considered as an alternative to EPIR with
backward-error-based convergence criteria, compared to forward-error-based
criteria \cite{demmel2006error}. We generated six matrices using
a combination of two sizes ($n\in\{100,200\}$) and three $\alpha$
values ($\alpha\in\{10,12,14\}$), with $r=1$ for all cases. For
each test case, we first generated a ``randsvd'' $\boldsymbol{A}$
and a random $\boldsymbol{b}$ in double precision, starting with
a preset seed (1) for reproducibility. We then converted $\boldsymbol{A}$
and $\boldsymbol{b}$ into variable-precision arithmetic (VPA) in
MATLAB (\url{https://www.mathworks.com/help/symbolic/vpa.html}) and
then computed $\boldsymbol{x}=\boldsymbol{A}\backslash\boldsymbol{b}$
using VPA. To prevent VPA from converting floating-point numbers into
closest rational numbers and introducing a rounding error of $\varepsilon_{w}$,
we first printed the numbers in $\boldsymbol{A}$ and $\boldsymbol{b}$
into character strings as quadruple-precision numbers with 32-digit
precision in C++ and then converted the strings into VPA. For all
the tests, we set the convergence tolerance ($\tau$) to $10^{-15}$. 

In Table~\ref{tab:Forward-and-backward}, we report the forward and
backward errors in the numerical solutions from RP-GMRES and FBSMR,
respectively. For FBSMR, we give the forward errors in terms of both
the working-precision $\hat{\boldsymbol{x}}$ and the higher-precision
$\tilde{\boldsymbol{x}}$. All the errors are computed in VPA using
the exact $\boldsymbol{x}$ and $\boldsymbol{b}$ as reference solutions.
It can be seen that RP-GMRES could only achieve approximately $\kappa(\boldsymbol{A})\varepsilon_{w}$
as expected. Furthermore, the backward error appeared to be quite
sensitive to a moderate increase in the problem size. In contrast,
FBSMR consistently delivered solutions at the maximal accuracy of
$\varepsilon_{w}$, and the errors were insensitive to problem sizes.
These results verified our analysis of EFBS and FBSMR. As a side product,
this experiment also confirmed our conclusion that randsvd matrices
are intrinsically well-conditioned for random $\boldsymbol{b}$ in
Example~\ref{exa:random}. It is also worth noting that $\tilde{\boldsymbol{x}}$
and $\hat{\boldsymbol{x}}$ have similar accuracy. However, if the
backward errors of FBSMR were computed from $\tilde{\boldsymbol{x}}$
(not shown in Table~\ref{tab:Forward-and-backward} to avoid confusion)
instead of $\hat{\boldsymbol{x}}$, then they would have been only
slightly better than those of RP-GMRES since $\kappa_{\delta\boldsymbol{x}}$
is close to $\kappa(\boldsymbol{A})$ due to Theorem~\ref{thm:conservation-condition-number}.
In other words, the $\mathcal{O}(\varepsilon_{w})$ errors in $\tilde{\boldsymbol{x}}$
are strongly correlated (in terms of $\boldsymbol{A}(\boldsymbol{x}-\tilde{\boldsymbol{x}}$))
but those in $\hat{\boldsymbol{x}}$ are uncorrelated. 

\begin{table}
\caption{\label{tab:Forward-and-backward}Forward and backward errors in solving
systems with ``randsvd'' $\boldsymbol{A}$ and random $\boldsymbol{b}$
using RP-GMRES and FBSMR preconditioned with LU factorization of $\boldsymbol{A}$.}

\centering{}%
\begin{tabular}{cc|cc|ccc}
\hline 
\multicolumn{2}{c|}{Case} & \multicolumn{2}{c|}{RP-GMRES} & \multicolumn{3}{c}{FBSMR}\tabularnewline
$n$ & $\text{\ensuremath{\kappa}}(\boldsymbol{A})$ & $\frac{\Vert\hat{\boldsymbol{x}}-\boldsymbol{x}_{*}\Vert}{\Vert\boldsymbol{x}_{*}\Vert}$ & $\frac{\Vert\boldsymbol{b}-\boldsymbol{A}\hat{\boldsymbol{x}}\Vert}{\Vert\boldsymbol{b}\Vert}$ & $\frac{\Vert\hat{\boldsymbol{x}}-\boldsymbol{x}_{*}\Vert}{\Vert\boldsymbol{x}_{*}\Vert}$ & $\frac{\Vert\tilde{\boldsymbol{x}}-\boldsymbol{x}_{*}\Vert}{\Vert\boldsymbol{x}_{*}\Vert}$ & $\frac{\Vert\boldsymbol{b}-\boldsymbol{A}\tilde{\boldsymbol{x}}\Vert}{\Vert\boldsymbol{b}\Vert}$\tabularnewline
\hline 
\hline 
100 & 1.e10 & 5.97e-8 & 8.96e-8 & 3.92e-17 & 2.73e-22 & 5.01e-22\tabularnewline
200 & 1.e10 & 5.43e-8 & 1.13e-7 & 4.70e-17 & 1.29e-22 & 2.27e-22\tabularnewline
\hline 
100 & 1.e12 & 6.02e-6 & 7.45e-6 & 7.83e-17 & 6.80e-17 & 1.01e-16\tabularnewline
200 & 1.e12 & 9.75e-6 & 1.12e-5 & 8.78e-17 & 7.76e-17 & 3.24e-16\tabularnewline
\hline 
100 & 1.e14 & 6.63e-4 & 6.71e-4 & 4.60e-17 & 6.80e-17 & 5.27e-18 \tabularnewline
200 & 1.e14 & 3.62e-4 & 1.17e-3 & 4.62e-17 & 6.35e-18 & 2.03e-17\tabularnewline
\hline 
\end{tabular}
\end{table}

\subsection{Evaluation with sparse linear systems}

In this test, we use some ``real-life'' systems from the UF Sparse
Matrix Collection \cite{DavisHu11UFSPC}. The first two cases were
used in \cite{arioli2009using}, and the next two were used in \cite{carson2017new}.
We added two additional problems from \cite{DavisHu11UFSPC}, where
\textsf{mplate} is complex non-Hermitian and \textsf{invextr1\_new}
has a user-provided right-hand-side (RHS). For the problem that did
not have the RHS vectors, we used all ones for $\boldsymbol{b}$.
To be as realistic as possible, we used single-precision MUMPS \cite{amestoy2001mumps}
to evaluate the factorization and triangular solves for all the cases.
The first two systems are symmetric and positive definite (SPD), so
we used single-precision Cholesky factorization; for the other systems,
we used single-precision LU with pivoting. Other than these settings,
we treated MUMPS as a black box. Table~\ref{tab:Forward-and-backward-UFSP}
reports the backward errors from RP-GMRES and FBSMR. We omit the forward
errors since small backward errors do imply small forward errors for
unpolluted well-posed problems, as we have proven in section~\ref{subsec:intrinsic-condition-numbers}
and demonstrated in section~\ref{subsec:Verification-of-EFBS}. In
addition, it is impractical to use VPA to evaluate the exact $\boldsymbol{x}$
for larger sparse systems. It can be seen that FBSMR achieved $\varepsilon_{w}$
consistently, but RP-GMRES achieved about $\sqrt{\varepsilon_{w}}$
for five out of six cases. It is worth noting that IR would have converged
only for \textsf{adder\_dcop\_06} and diverged for all the others.

\begin{table}
\caption{\label{tab:Forward-and-backward-UFSP}Backward errors in solving sparse
systems from \cite{DavisHu11UFSPC} using RP-GMRES and FBSMR preconditioned
with single-precision factorization using MUMPS \cite{amestoy2001mumps}.
$\kappa_{1}(\boldsymbol{A})$ were estimated using the \textsf{condest}
function in MATLAB.}

\centering{}\setlength\tabcolsep{2pt}%
\begin{tabular}{cccccc|cc}
\hline 
\multicolumn{6}{c|}{Case} & \multicolumn{1}{c}{RP-GMRES} & \multicolumn{1}{c}{FBSMR}\tabularnewline
\hline 
id & $n$ & $\text{\ensuremath{\kappa}}_{1}(\boldsymbol{A})$ & type & RHS & Application & $\frac{\Vert\boldsymbol{b}-\boldsymbol{A}\hat{\boldsymbol{x}}\Vert}{\Vert\boldsymbol{b}\Vert}$ & $\frac{\Vert\boldsymbol{b}-\boldsymbol{A}\tilde{\boldsymbol{x}}\Vert}{\Vert\boldsymbol{b}\Vert}$\tabularnewline
\hline 
\hline 
s3rmq4m1 & 5489 & 3.1e10 & spd & \textbf{1} & structural & 7.87e-7 & 9.73e-16\tabularnewline
s3dkq4m2 & 90449 & 3.5e11 & spd & \textbf{1} & structural & 3.06e-6 & 9.82e-16\tabularnewline
radfr1 & 1048 & 5.6e10 & unsym & \textbf{1} & chem. eng. & 1.29e-8 & 1.18e-16\tabularnewline
adder\_dcop\_06 & 1813 & 2.1e12 & unsym & \textbf{1} & circuit sim. & 4.13e-10 & 3.69e-16\tabularnewline
mplate & 5962 & 4.8e16 & nonHerm. & \textbf{1} & acoustic  & 4.46e-7 & 7.69e-17\tabularnewline
invextr1\_new & 30412 & 2.8e18 & unsym & given & CFD & 5.94e-7 & 3.47e-16\tabularnewline
\hline 
\end{tabular}
\end{table}

In section~\ref{sec:forward-and-backward-errors}, we asserted that
the main source of error in RP-GMRES was the inconsistency between
the true residual and estimated residual based on PLS (\ref{eq:projected-least-squares})
(the premise for Theorem~\ref{thm:restarted-rpgmres}). To verify
it, we plot the convergence history of the different residuals from
RP-GMRES in Figure~\ref{fig:Convergence-history-of}, along with
the residuals in FBSMR. It can be seen that the residual of PLS in
RP-GMRES can give a false sense of accuracy in all the cases, which
became obvious only after we corrected the residual at the end of
the computation. More importantly, there is a clear correlation between
these inconsistencies and the lack of accuracy of RP-GMRES. This correlation
is indeed a causation relationship, as we have proven in Theorem~\ref{thm:restarted-rpgmres}.

\begin{figure}
\subfloat[s3rmq4m1]{\includegraphics[width=0.32\columnwidth]{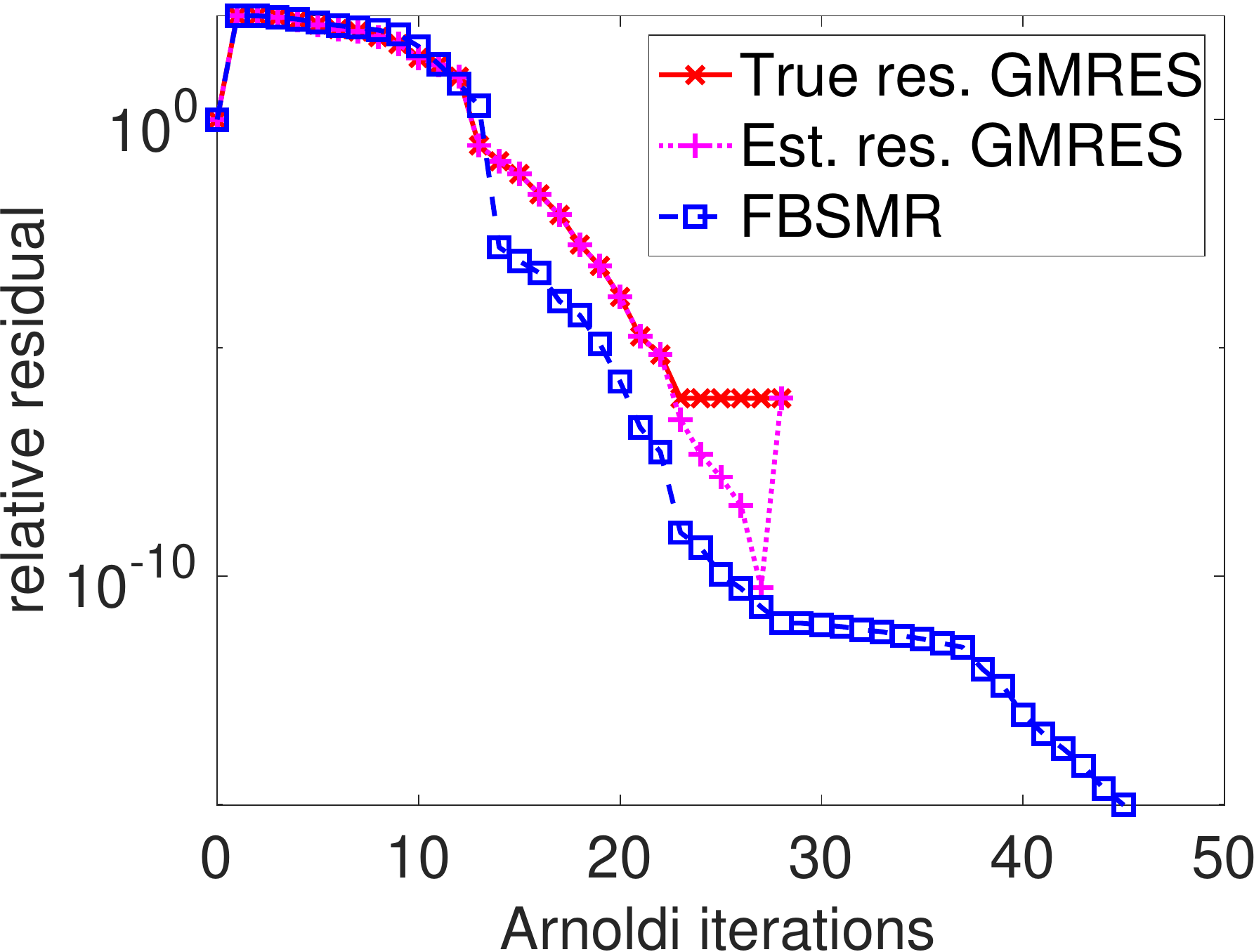}

}\subfloat[s3dkq4m2]{\includegraphics[width=0.32\columnwidth]{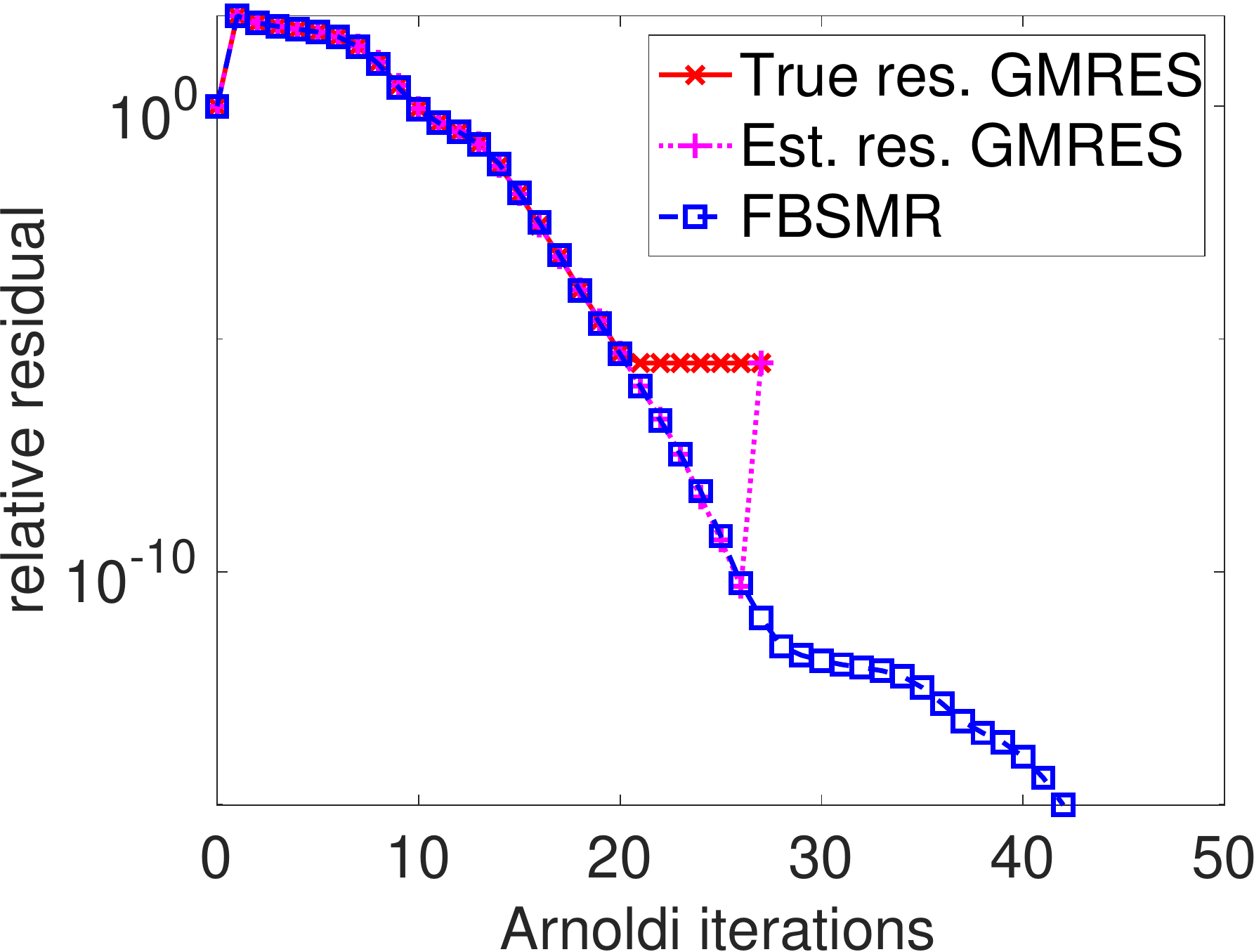}

}\subfloat[radfr1]{\includegraphics[width=0.32\columnwidth]{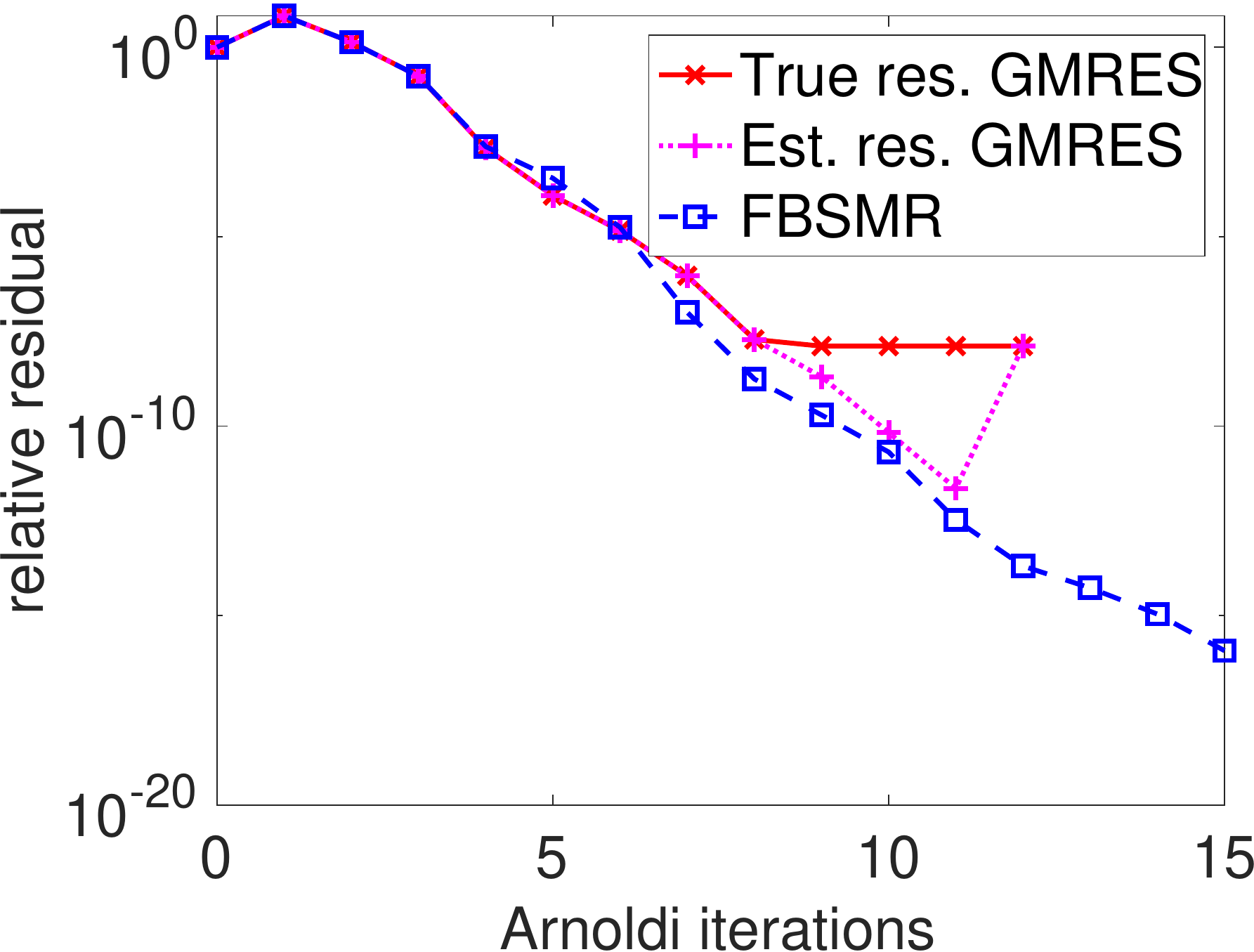}

}

\subfloat[adder\_dcop\_06]{\includegraphics[width=0.32\columnwidth]{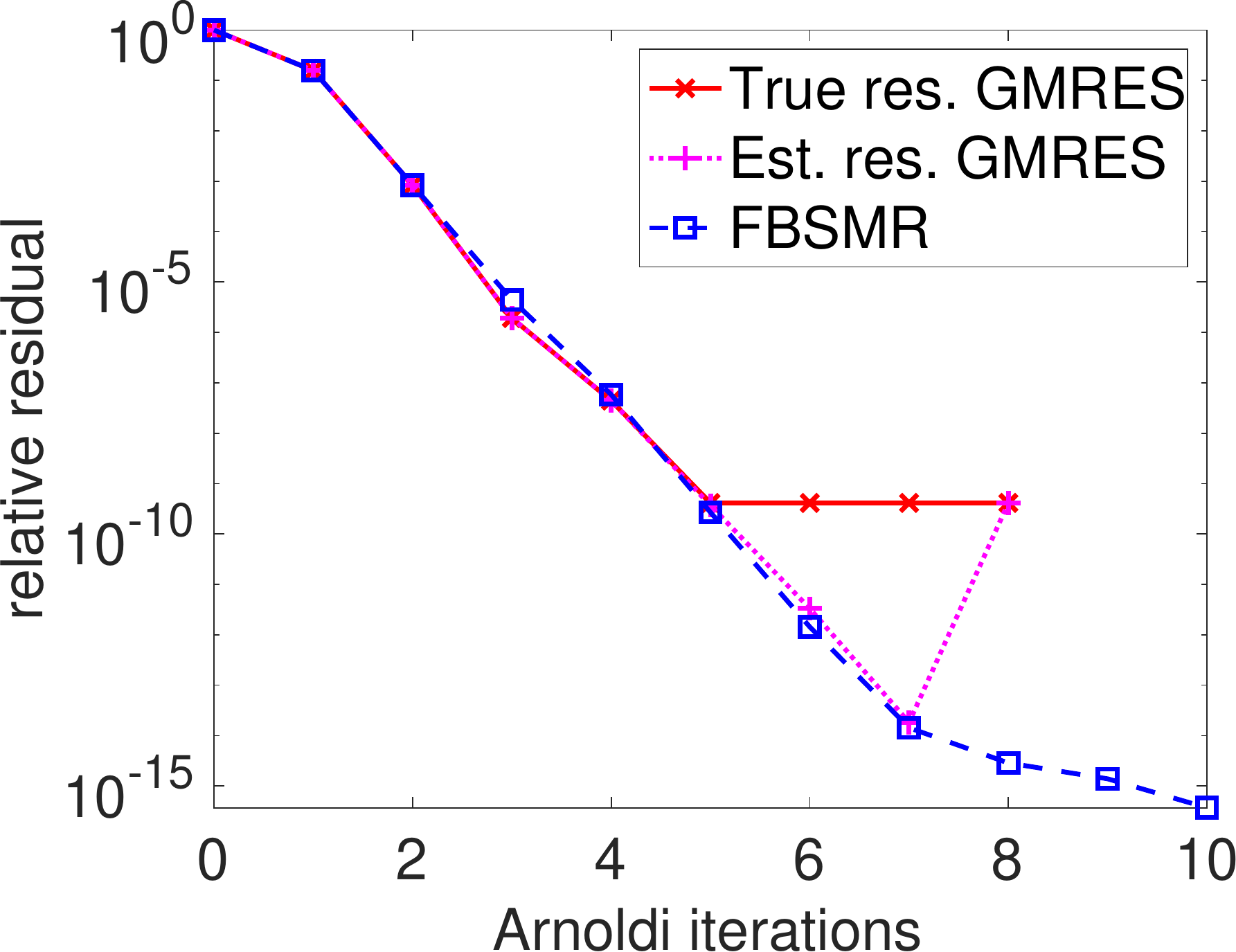}

}\subfloat[mplate]{\includegraphics[width=0.32\columnwidth]{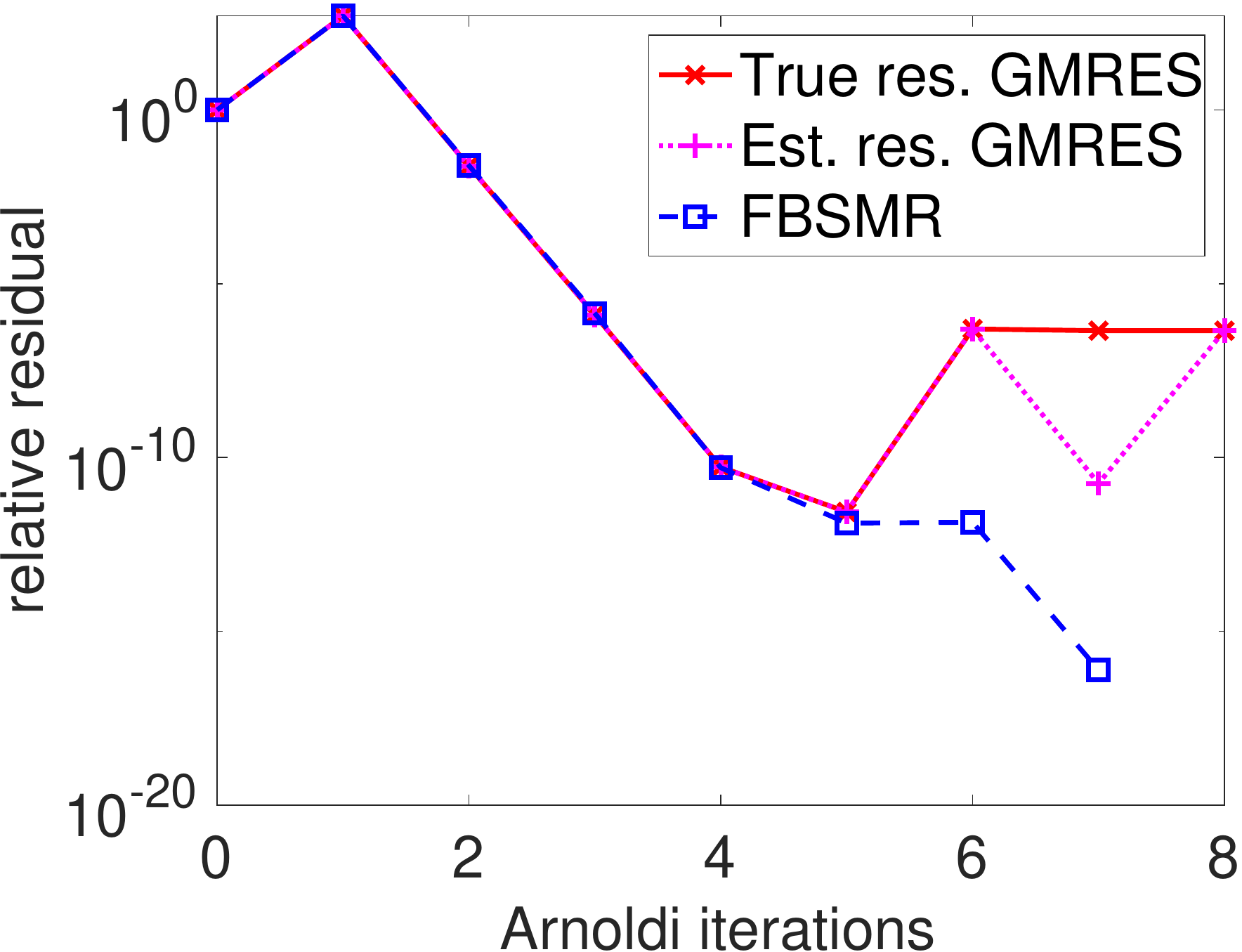}

}\subfloat[invextr1\_new]{\includegraphics[width=0.32\columnwidth]{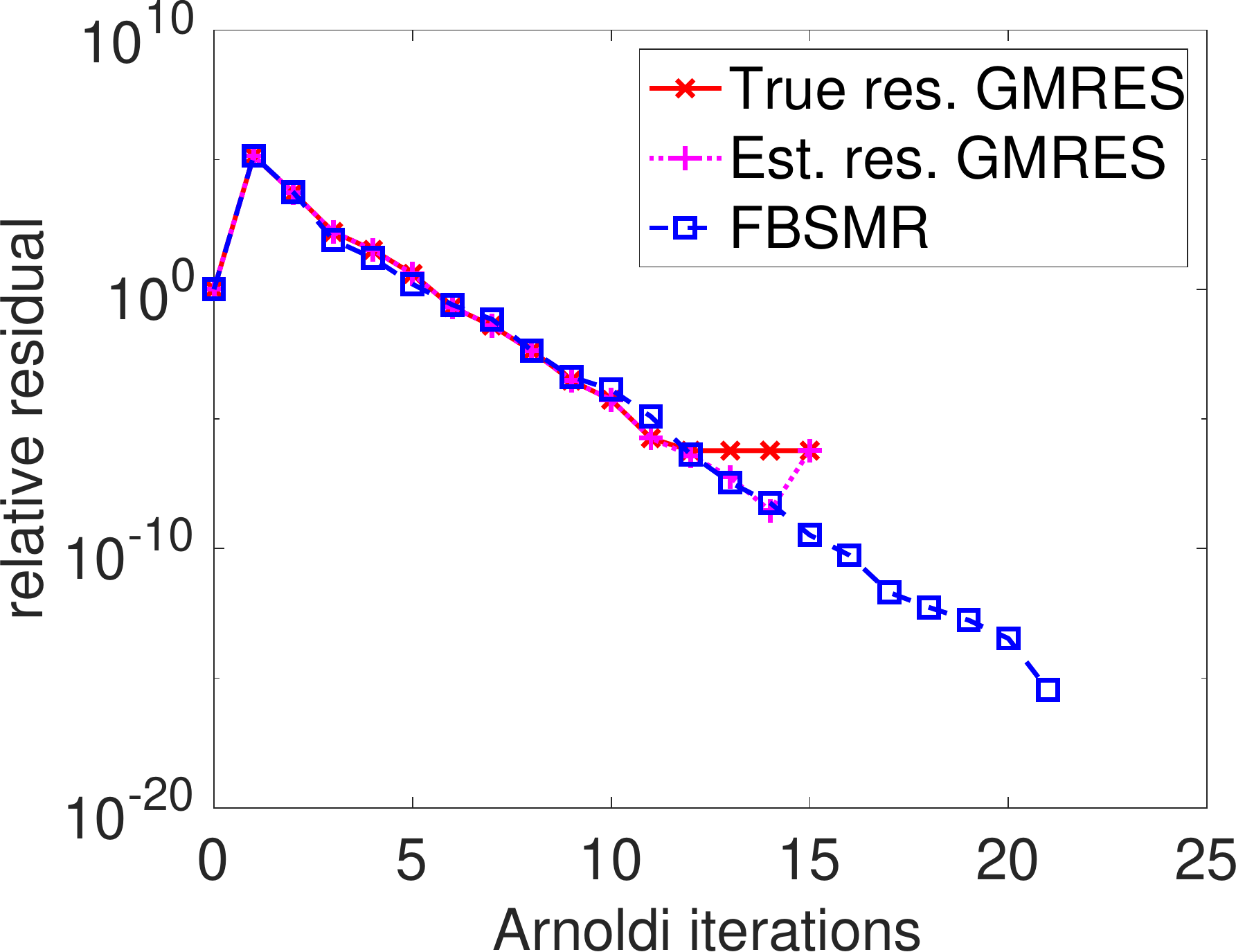}

}

\caption{\label{fig:Convergence-history-of}Convergence history of residuals
for sparse systems in Table~\ref{tab:Forward-and-backward-UFSP}.}
\end{figure}

\subsection{\label{subsec:Effect-of-initial}Effect of initial solutions from
approximate inverses}

In FBSMR, we use $\boldsymbol{x}_{0}=\boldsymbol{M}^{-1}\boldsymbol{b}$
as the initial guess even though $\boldsymbol{M}$ is known to be
an inaccurate factorization. Inaccurate factorizations often lead
to initial spikes of the residuals, as we have seen in Figure~\ref{fig:Convergence-history-of}.
Since the initial spikes may increase the number of iterations, one
might question the wisdom of using $\boldsymbol{M}^{-1}\boldsymbol{b}$
as the initial solution. We conjecture that this spike only indicates
that the singular vectors corresponding to the extreme singular values
of $\boldsymbol{A}$ are not well resolved by the inaccurate factorization.
The approximate factorization can still resolve some (if not all)
singular vectors corresponding to the interior singular values, so
using $\boldsymbol{M}^{-1}\boldsymbol{b}$ as the initial solution
can eliminate some modes corresponding to the interior singular values
from the residual. Hence, we can expect FBSMR to solve the linear
system with lower-dimensional Krylov subspaces, and in turn allowing
a small restart value. A rigorous analysis seems to be challenging
and is beyond the scope of this work. We present some numerical evidence
in Figure~\ref{fig:Convergence-initial} for the cases in Table~\ref{tab:Forward-and-backward-UFSP}.
It can be seen that using $\boldsymbol{x}_{0}=\boldsymbol{0}$ would
make FBSMR converge more slowly in all cases except for \textsf{s3rmq4m1}.
More importantly, with $\boldsymbol{x}_{0}=\boldsymbol{0}$, FBSMR
could not converge for \textsf{mplate} and \textsf{invextr1\_new}
with restart=30. Using $\boldsymbol{M}^{-1}\boldsymbol{b}$ as $\boldsymbol{x}_{0}$
allowed FBSMR to converge even with a small restart value, even though
$\boldsymbol{M}$ may be grossly inaccurate.

\begin{figure}
\subfloat[s3rmq4m1]{\includegraphics[width=0.32\columnwidth]{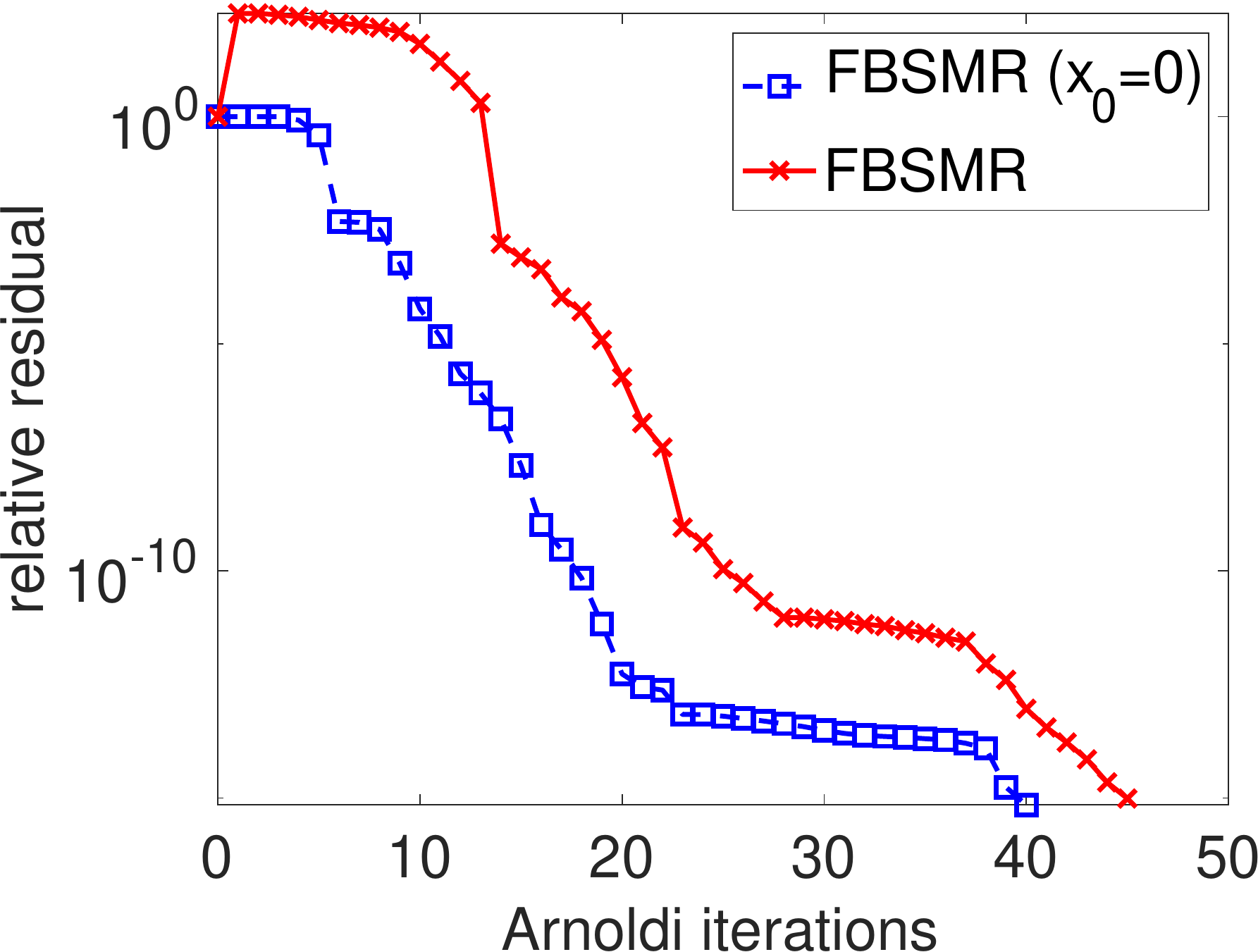}

}\subfloat[s3dkq4m2]{\includegraphics[width=0.32\columnwidth]{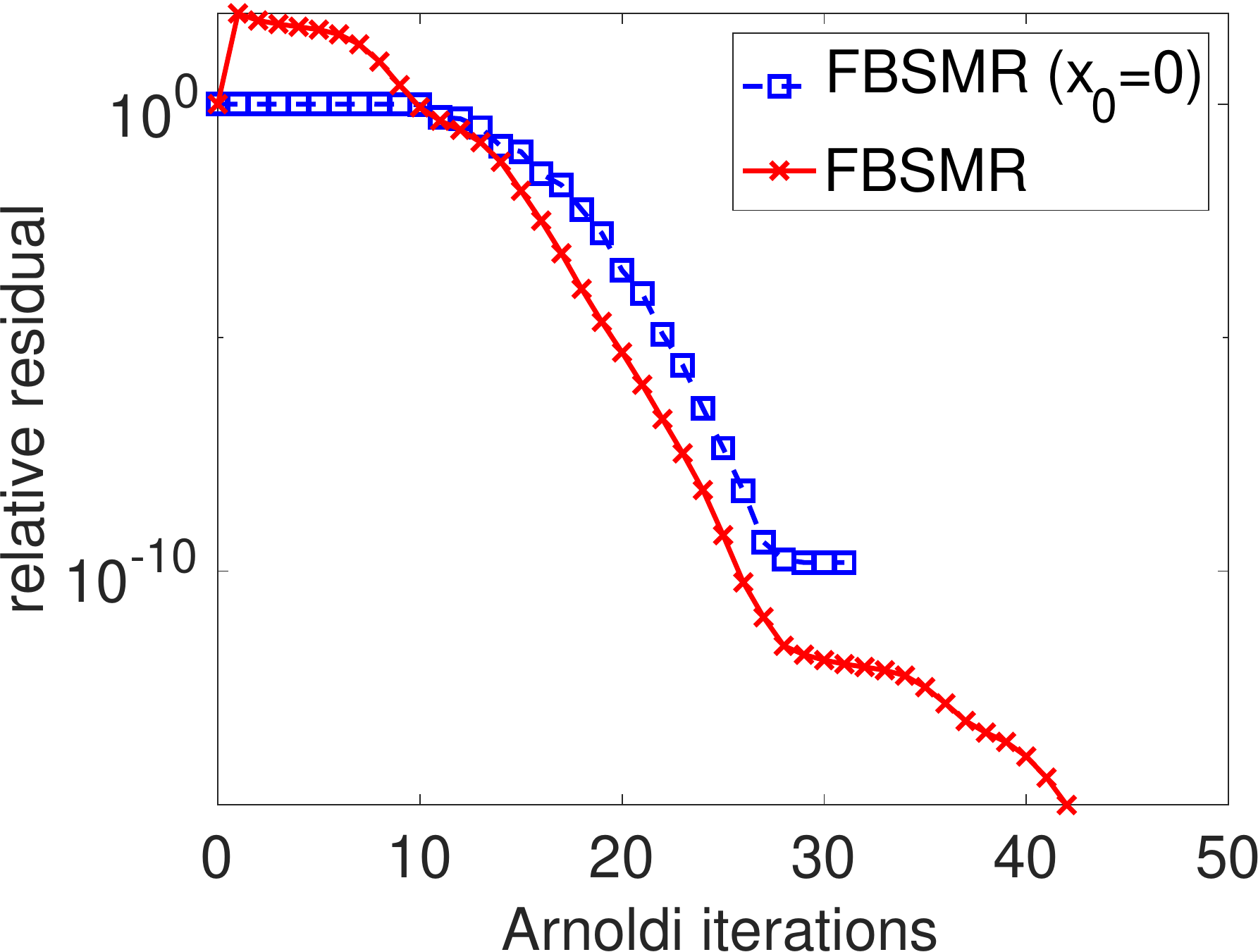}

}\subfloat[radfr1]{\includegraphics[width=0.32\columnwidth]{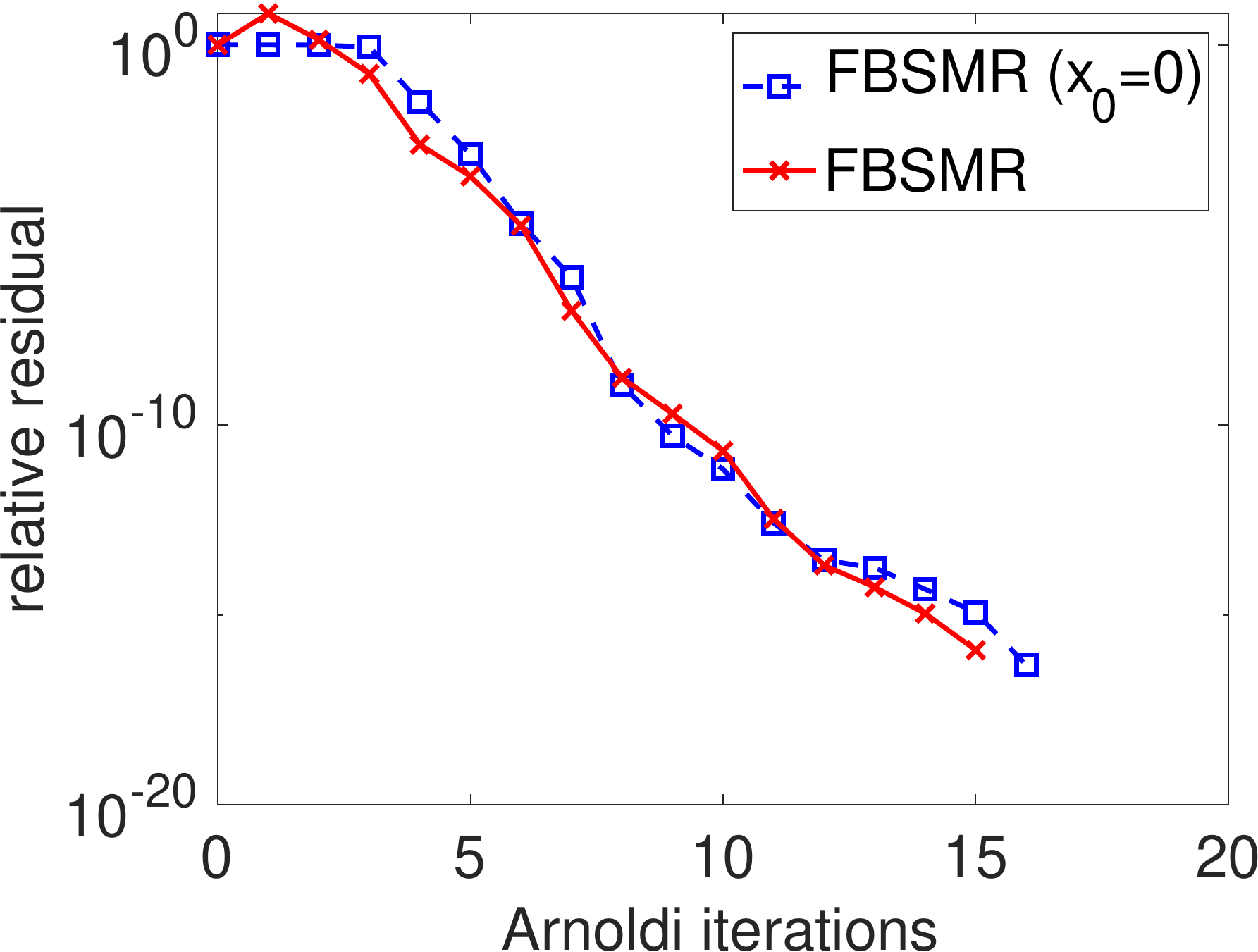}

}

\subfloat[adder\_dcop\_06]{\includegraphics[width=0.32\columnwidth]{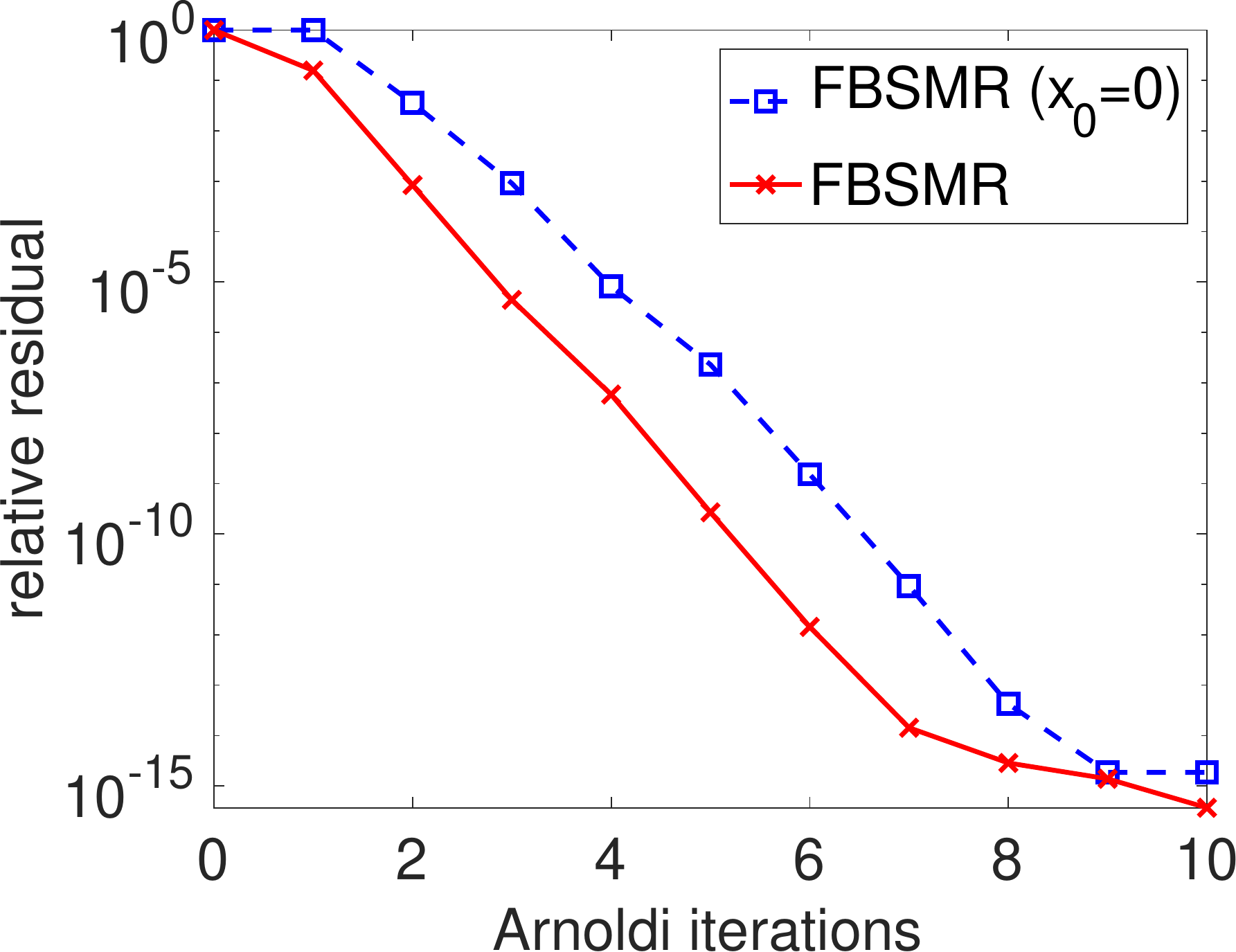}

}\subfloat[mplate]{\includegraphics[width=0.32\columnwidth]{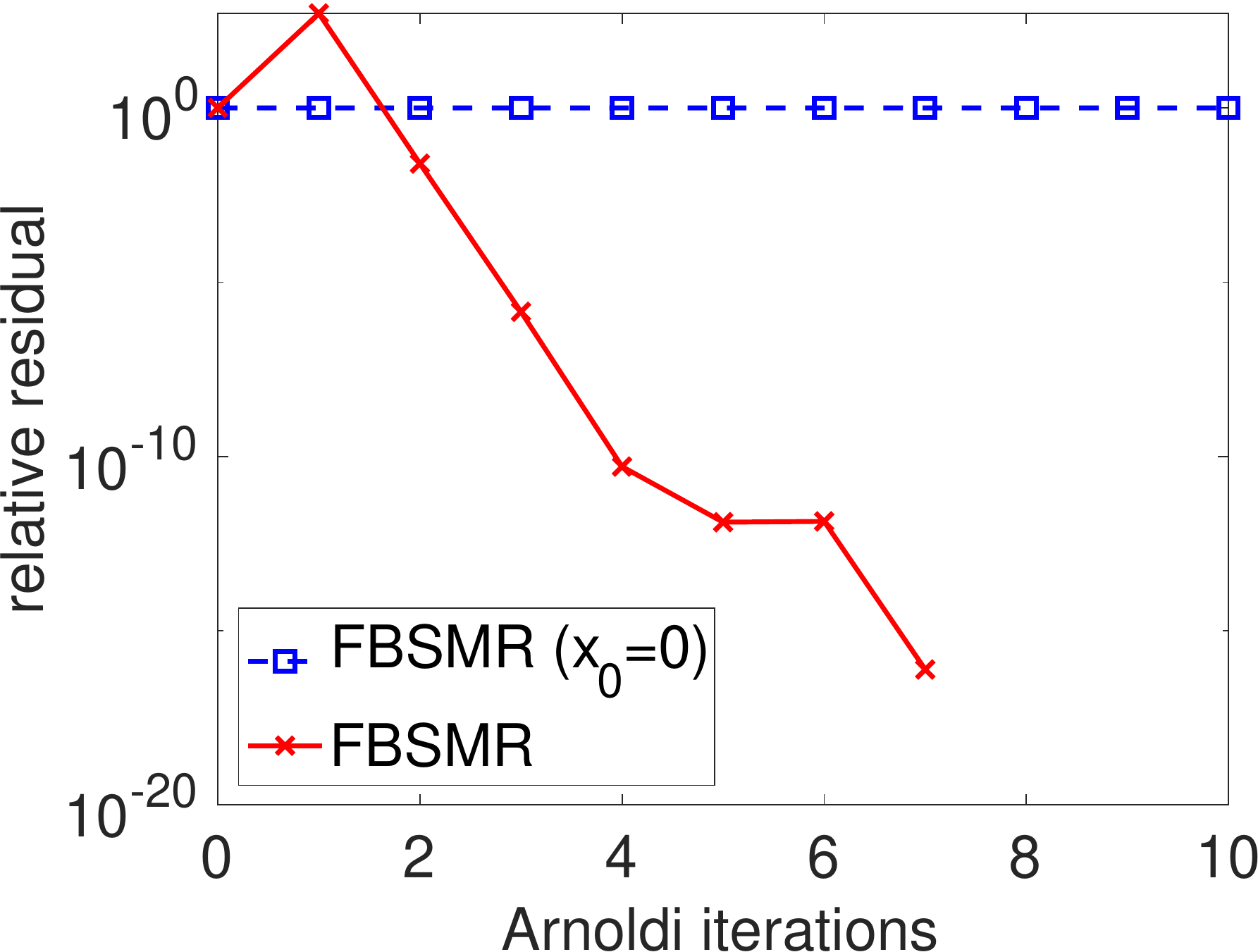}

}\subfloat[invextr1\_new]{\includegraphics[width=0.32\columnwidth]{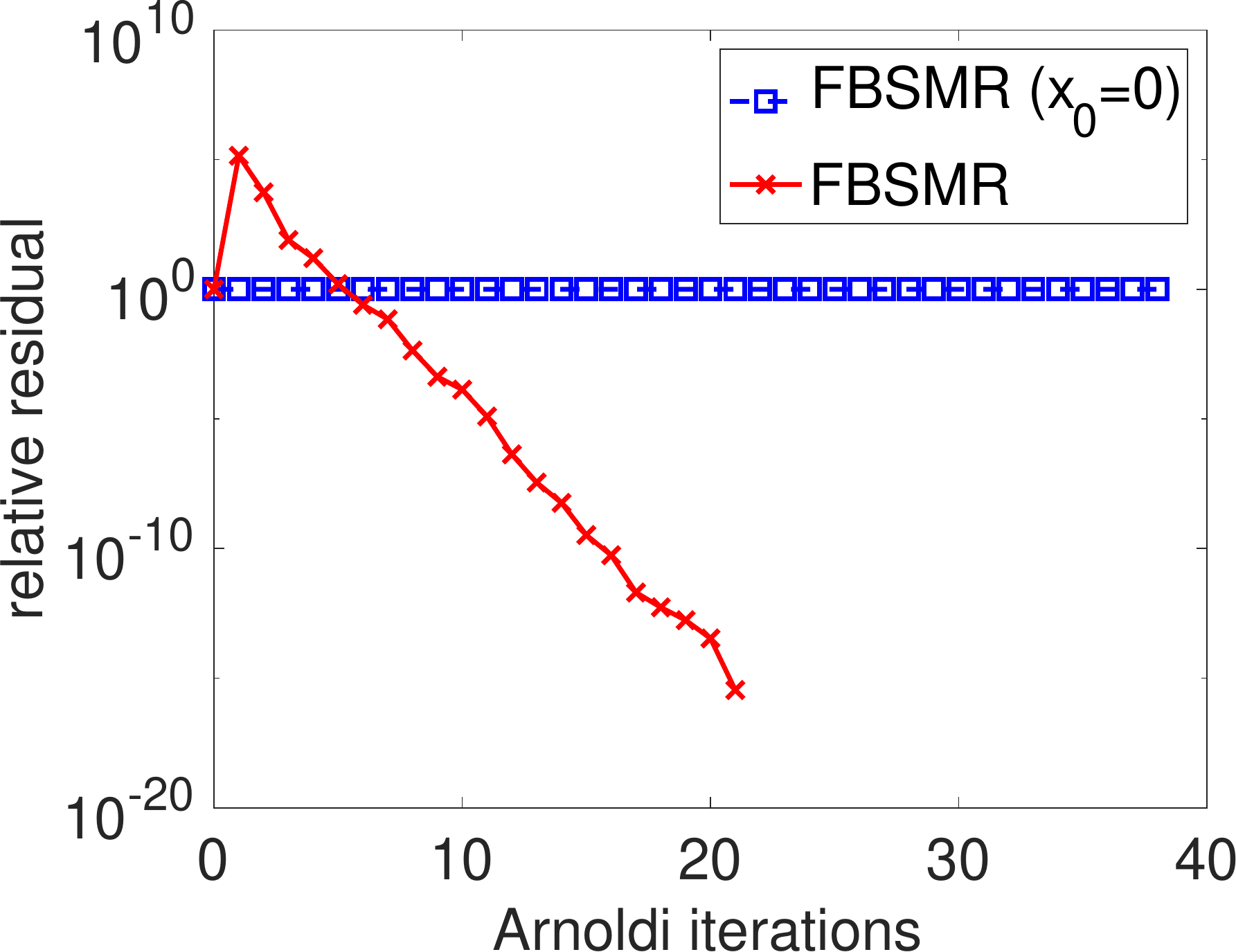}

}

\caption{\label{fig:Convergence-initial}Comparison of $\boldsymbol{x}_{0}=\boldsymbol{M}^{-1}\boldsymbol{b}$
vs. $\boldsymbol{x}_{0}=\boldsymbol{0}$ for sparse systems in Table~\ref{tab:Forward-and-backward-UFSP}.
With $\boldsymbol{x}_{0}=\boldsymbol{0}$, FBSMR did not converge
for \textsf{mplate} and \textsf{invextr1\_new} after 500 iterations.
We cropped the plots to show only the first few iterations.}
\end{figure}

\subsection{\label{subsec:Comparison-of-CGS}Comparison of CGS versus MGS}

In section~\ref{sec:forward-and-backward-errors}, we used the argument
of quasi-minimization to support the use of CGS in place of MGS in
FBSMR. To verify this claim, we compare the convergence history of
FBSMR using MGS versus CGS in Figure~\ref{fig:cgs-vs-mgs} for the
sparse systems in Table~\ref{tab:Forward-and-backward-UFSP}. It
can be seen that FBSMR converged to machine precision with either
MGS or CGS. However, for \textsf{radfr1} and \textsf{adder\_dcop\_06},
FBSMR with MGS used one fewer iteration than with CGS due to a slightly
more accurate estimation of the residual. Hence, we recommend using
MGS in serial but using CGS in parallel to reduce communication overhead
in FBSMR. This practice is common for GMRES, so it would be relatively
easy to implement FBSMR based on existing GMRES implementations.

\begin{figure}
\subfloat[s3rmq4m1]{\includegraphics[width=0.32\columnwidth]{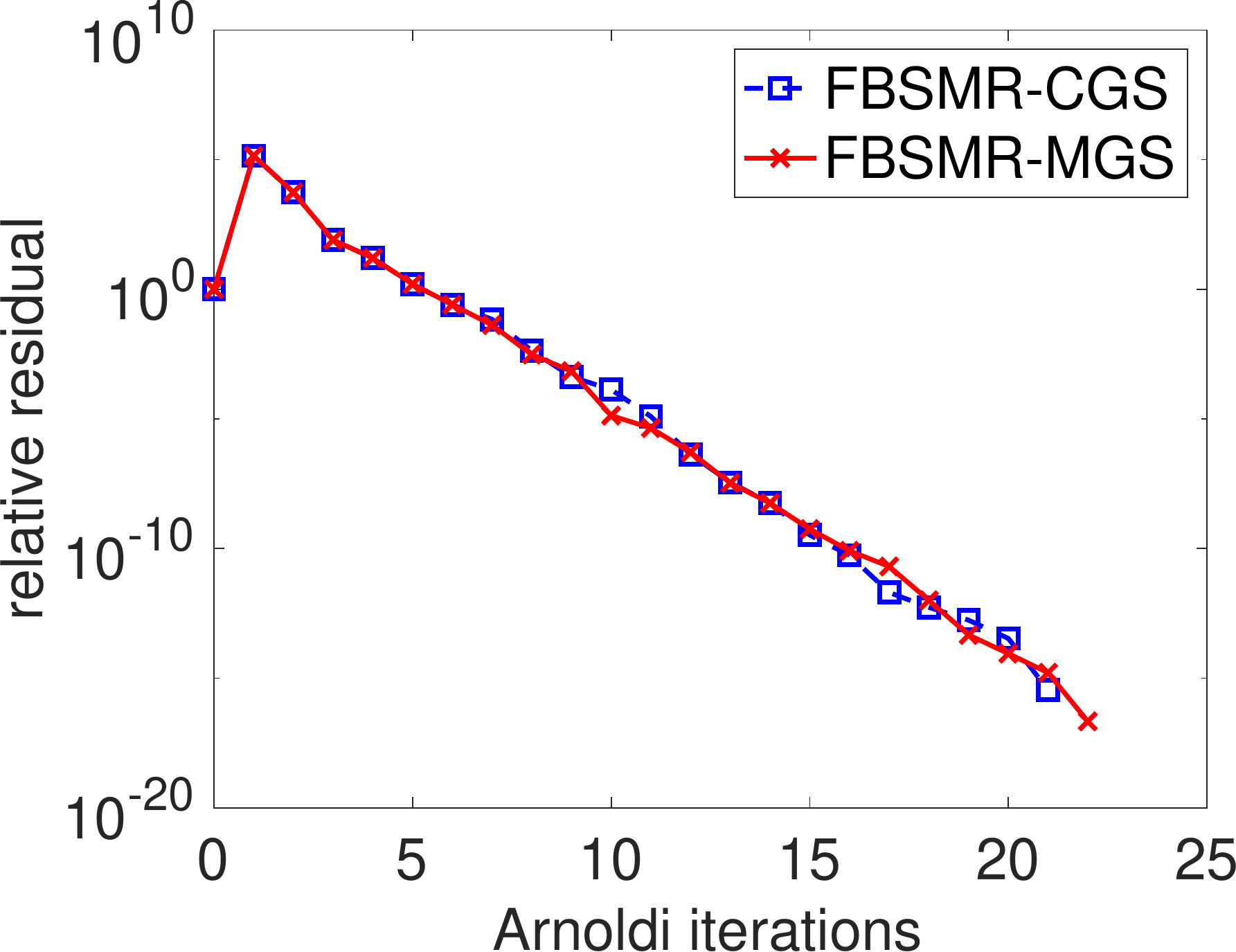}

}\subfloat[s3dkq4m2]{\includegraphics[width=0.32\columnwidth]{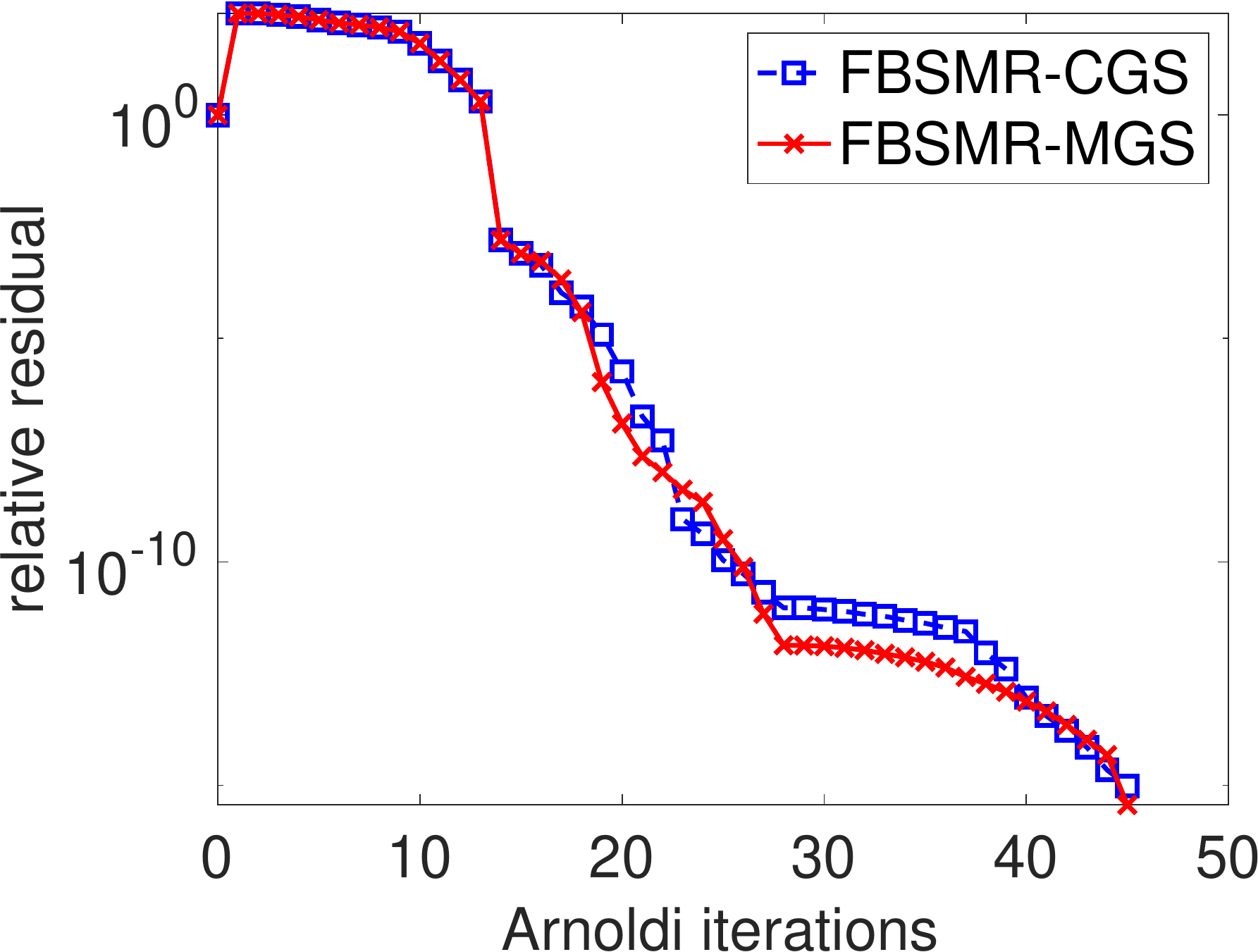}

}\subfloat[radfr1]{\includegraphics[width=0.32\columnwidth]{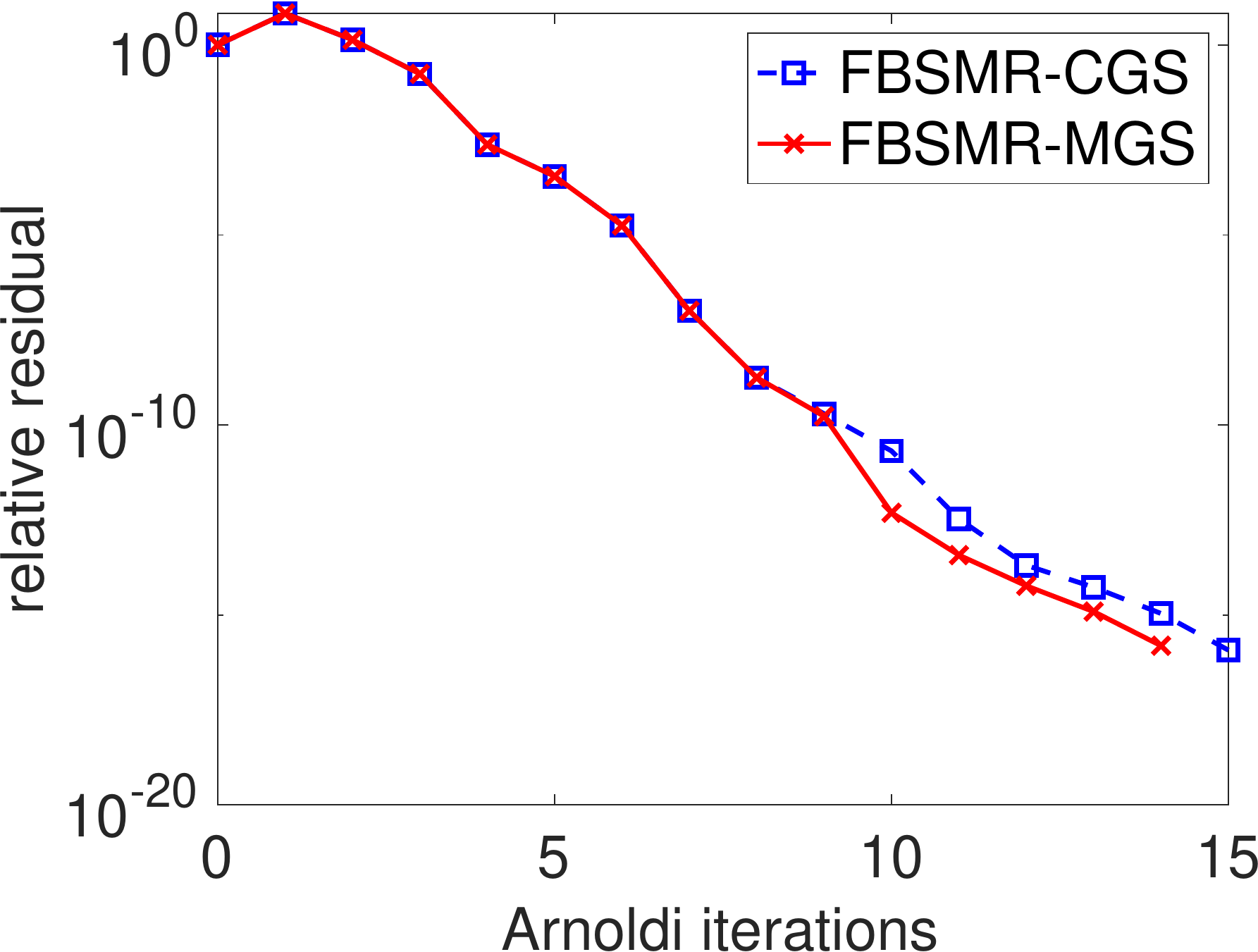}

}

\subfloat[adder\_dcop\_06]{\includegraphics[width=0.32\columnwidth]{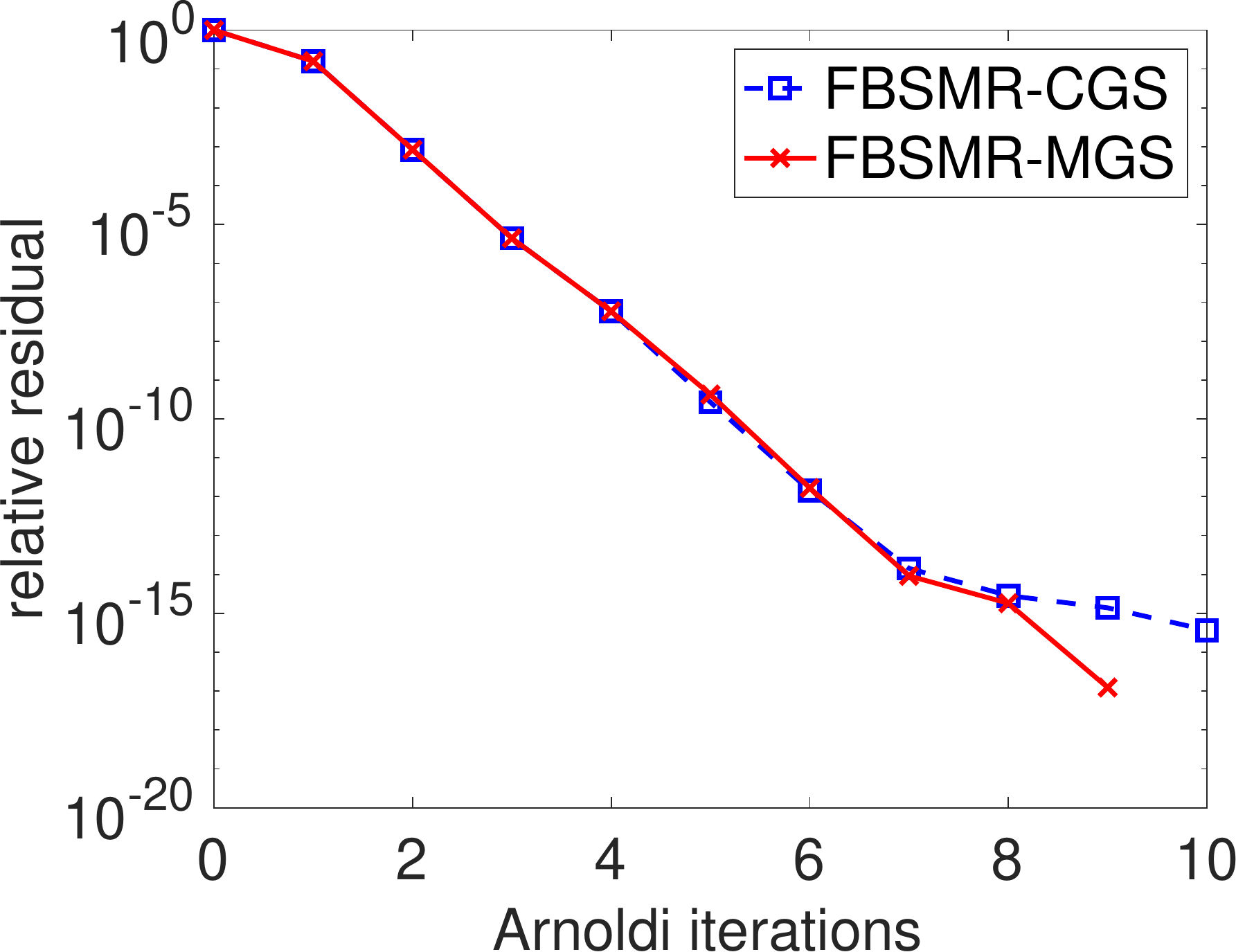}

}\subfloat[mplate]{\includegraphics[width=0.32\columnwidth]{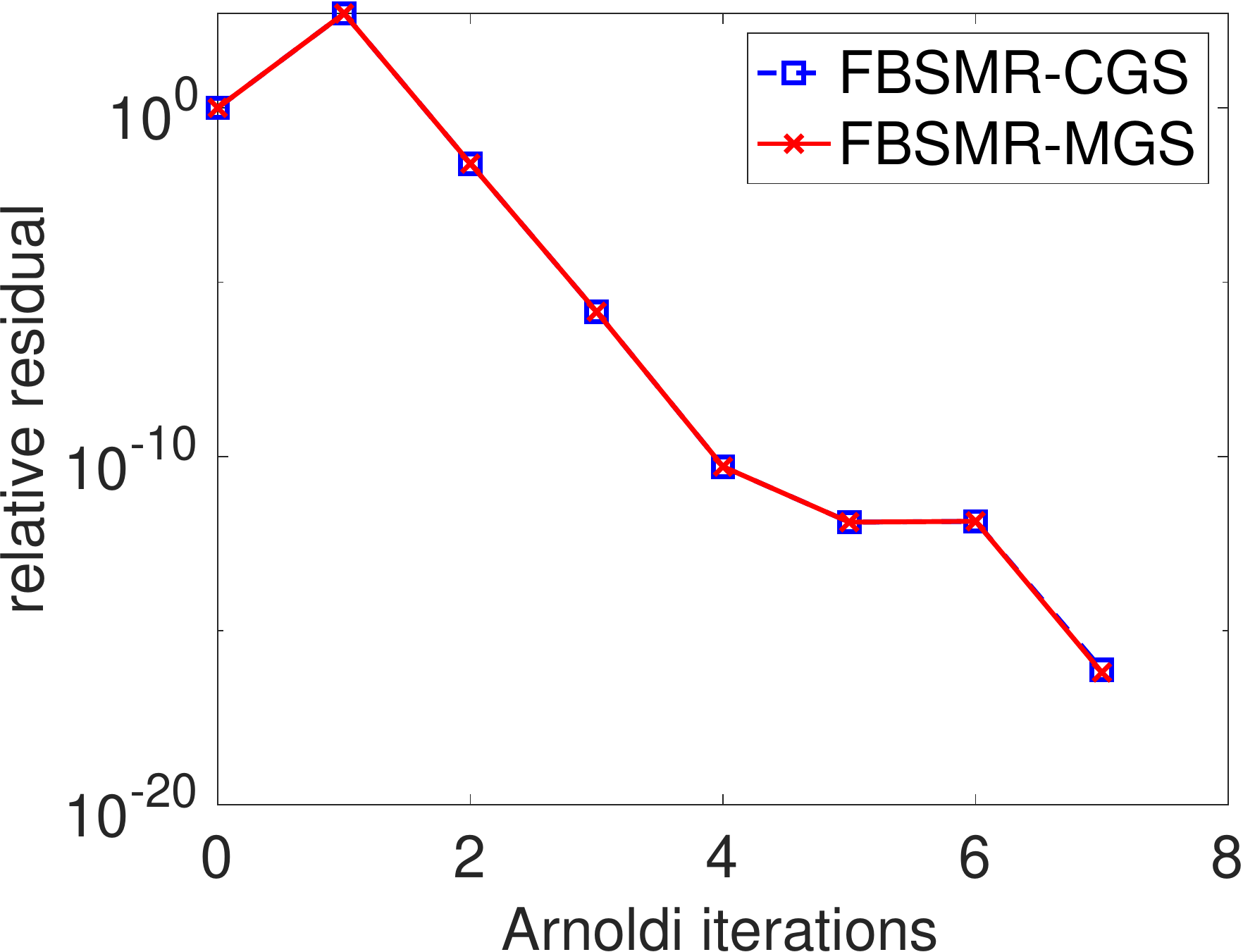}

}\subfloat[invextr1\_new]{\includegraphics[width=0.32\columnwidth]{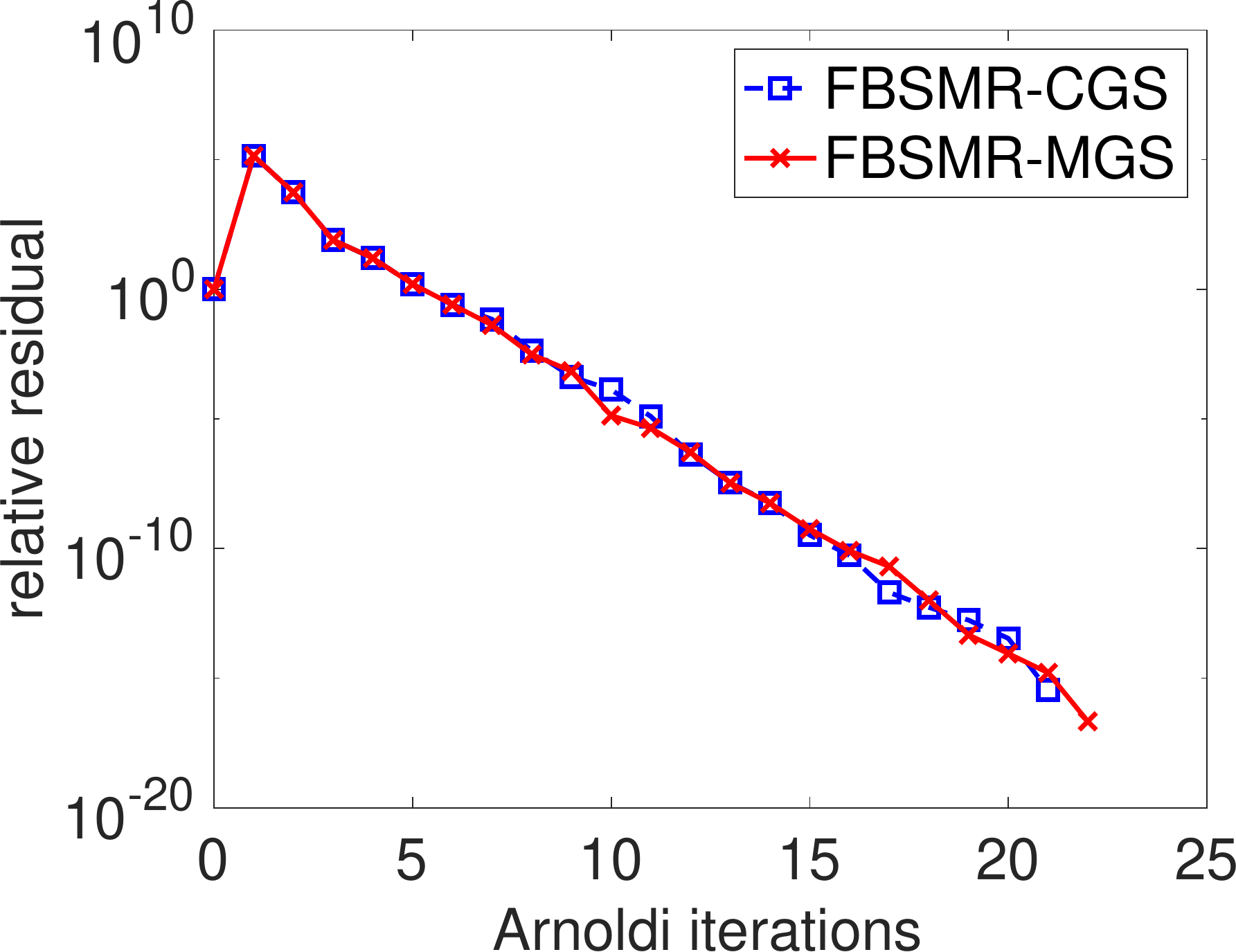}

}

\caption{\label{fig:cgs-vs-mgs}Comparison of CGS vs. MGS for sparse systems
in Table~\ref{tab:Forward-and-backward-UFSP}.}
\end{figure}

\section{\label{sec:Conclusions}Conclusions and Discussions}

In this work, we introduced an iterative method called \emph{FBSMR}
by leveraging quasi-minimization in RP-GMERS for well-posed problems
with ill-conditioned coefficient matrices. We showed that FBSMR could
overcome ill-conditioned coefficient matrices and deliver optimal
forward and backward errors at $\mathcal{O}(\varepsilon_{w})$, assuming
the input matrix is unpolluted in the sense that $\Vert\delta\boldsymbol{A}\boldsymbol{x}\Vert/\Vert\boldsymbol{b}\Vert=\mathcal{O}(\varepsilon_{w})$
for the rounding errors $\delta\boldsymbol{A}$ in $\boldsymbol{A}$.
In contrast, a traditional backward stable method can only guarantee
$\mathcal{O}(\kappa(\boldsymbol{A})\varepsilon_{w})$ in forward errors.
FBSMR achieves this optimal accuracy by meeting a stronger stability
requirement, namely EFBS, using higher precision for a small portion
of the computation. Since the most expensive computations (namely
the preconditioner) can be performed in lower precision, FBSMR can
also improve efficiency over direct methods with or without IR, while
being more accurate than other iterative methods. In this work, we
presented experimental results using LU or Cholesky factorizations
as preconditioners in FBSMR. We have done some preliminary studies
with hybrid incomplete factorization \cite{chen2022hifir} in low
precision and also had great success. We will report more extensive
numerical experimentations, including performance results, when we
release FBSMR as an open-source library in the future.

The concepts developed in this work lead to some interesting new opportunities
in improving numerical methods in several aspects. First, the application
developers can take advantage of EFBS algorithms by devising methods
where the rounding (and discretization) errors in the matrices are
strongly or weakly correlated so that $\Vert\delta\boldsymbol{A}\boldsymbol{x}\Vert/\Vert\boldsymbol{b}\Vert=\mathcal{O}(\varepsilon_{w})$
or $\Vert\delta\boldsymbol{A}\boldsymbol{x}\Vert/\Vert\boldsymbol{b}\Vert=\mathcal{O}(\kappa^{p}(\boldsymbol{A})\varepsilon_{w})$
for some $p<1$ (such as $p=1/2$). Such optimization can significantly
benefit high-order discretization methods, for which the discretization
errors are often so small on finer meshes that rounding errors become
the dominant factor. Second, we can further refine the stopping criteria
in FBSMR for problems with weakly correlated rounding errors, so that
it can stop earlier at $\mathcal{O}(\kappa^{p}(\boldsymbol{A})\varepsilon_{w})$
instead of $\mathcal{O}(\varepsilon_{w})$ based on the applications.
Third, we used a heuristic argument with numerical experimentation
to justify the use of an inaccurate approximate inverse to compute
the initial guess in section~\ref{subsec:Effect-of-initial}; it
would be useful to derive a more rigorous analysis to determine when
it is more efficient to use $\boldsymbol{M}^{-1}\boldsymbol{b}$ as
the initial guess, especially when $\boldsymbol{M}$ is computed only
in $\sqrt[4]{\varepsilon_{w}}$ precision or with incomplete factorization.
Finally, it is possible extend the strategy of FBS to other numerical
problems, such as (rank-deficient) least squares problems and nonlinear
equations, so that more methods can achieve EFBS while taking advantage
of lower-precision preconditioners for improved efficiency. We plan
to explore these research directions in the future. 

\section*{Acknowledgments}

The author would like to thank Dr. Qiao Chen for some helpful discussions
on mixed-precision computations in industrial applications.

\bibliographystyle{abbrv}
\bibliography{krylov}

\end{document}